\DeclareMathAlphabet{\eusm}{OT1}{eusm}{m}{n}
\newtheorem{theor}{Theorem}[section]
\newtheorem{prop}[theor]{Proposition}
\newtheorem{defi}[theor]{Definition}
\newtheorem{cor}[theor]{Corollary}
\newtheorem{exam}[theor]{Example}
\newtheorem{remark}[theor]{Remark}
\def\Rad{\mbox{Rad\/}}
\newcommand{\Hom}{{\rm Hom}}
\newcommand{\Mod}{{\rm Mod}}
\newcommand{\fp}{{\rm fp}}
\newcommand{\op}{{\rm op}}
\newcommand{\Ab}{{\rm Ab}}
\begin{document}

\title[Relatively divisible and relatively flat objects]
{Relatively divisible and relatively flat objects \\ in exact categories: Applications}

\subjclass[2010]{18E10, 18G50, 16D90} 

\keywords{Exact category, divisible object, flat object,
cotorsion pair, finitely accessible additive category, module category, pure short exact sequence,
simple module, Jacobson radical.}

\author[S. Crivei]{Septimiu Crivei}
\address {Faculty of Mathematics and Computer Science, Babe\c{s}-Bolyai University, 400084 Cluj-Napoca, Romania}
\email{crivei@math.ubbcluj.ro}

\author[D. Kesk\.{i}n T\"ut\"unc\"u]{Derya Kesk\.{i}n T\"ut\"unc\"u}
\address {Department of Mathematics, Hacettepe University, 06800 Ankara, Turkey} \email{keskin@hacettepe.edu.tr}

\begin{abstract} We continue our study of relatively divisible and relatively flat objects in exact categories in the
sense of Quillen with several applications to exact structures on finitely 
accessible additive categories and module categories. We derive consequences for exact structures generated by the
simple modules and the modules with zero Jacobson radical.
\end{abstract}

\date{October 27, 2018}

\maketitle

\section{Introduction}

In our paper \cite{CK1} we have introduced and studied relatively divisible and relatively flat objects in exact categories in the
sense of Quillen. For every relative cotorsion pair $(\mathcal{A},\mathcal{B})$ in an exact category $\mathcal{C}$, 
we have seen that $\mathcal{A}$ coincides with the class of relatively flat objects of $\mathcal{C}$ 
for some relative projectively generated exact structure, while $\mathcal{B}$ coincides with 
the class of relatively divisible objects of $\mathcal{C}$ for some relative injectively generated exact structure.
We have also established closure properties and characterizations in terms of approximation theory. 
For motivation and background the reader is referred to \cite{CK1}. 
In the present paper we give some representative applications of our theory to exact structures on finitely 
accessible additive categories and module categories. 

In Section 2 we show an application to absolutely pure objects in 
finitely accessible additive categories. As a consequence of a result for 
$\mathcal{D}$-$\mathcal{E}$-$\mathcal{A}$-divisible objects in functor categories, we
generalize a theorem known for finitely accessible abelian categories \cite[Proposition~5.6]{Prest}, and we deduce that
an object of a finitely accessible additive category is absolutely pure if and only if its pure-injective envelope is
injective.

Section 3 contains our main applications to module categories. We also consider flatly generated exact structures, and
we show and use their relationship with projectively generated ones. We establish several results connecting
purity-related notions with relatively divisible and relatively flat properties. We obtain some characterizations of
right IF rings, right FGF rings and right $\mathcal{N}$-coherent rings for some class $\mathcal{N}$ of finitely
presented right $R$-modules.

In the final two sections we further illustrate our theory with a series of properties which can be deduced from
our previous results. Motivated by some recent results by Fuchs \cite{Fuc}, Section 4 discusses the case of 
exact structures projectively, injectively and flatly generated by the class of simple modules, 
and we deduce and complete a series of known properties. Section 5 deals with exact structures projectively, 
injectively and flatly generated by the class of modules with zero Jacobson radical, and all results are new.

\section{Applications to finitely accessible additive categories}

We recall some terminology on finitely accessible additive categories, mainly from \cite{Prest}. An additive category
$\mathcal{C}$ is called \emph{finitely accessible} if it has direct limits, the class of finitely presented objects is
skeletally small, and every object is a direct limit of finitely presented objects. Recall that an object $C$ of
$\mathcal{C}$ is finitely presented if the representable functor $\Hom_{\mathcal{C}}(C,-):\mathcal{C}\to {\rm Ab}$
commutes with direct limits, where Ab is the category of abelian groups.

Let $\mathcal{C}$ be a finitely accessible additive category. By a \emph{sequence} 
$0\to X\stackrel{f}\to Y\stackrel{g}\to Z\to 0$ in $\mathcal{C}$ we mean a pair of composable morphisms $f:X\to Y$ and
$g:Y\to Z$ such that $gf=0$. The above sequence in $\mathcal{C}$ is called \emph{pure exact} if it induces an exact
sequence of abelian groups $$0\to \Hom_{\mathcal{C}}(P,X)\to \Hom_{\mathcal{C}}(P,Y)\to \Hom_{\mathcal{C}}(P,Z)\to 0$$
for every finitely presented object $P$ of $\mathcal{C}$. This implies that $f$ and $g$ form a kernel-cokernel pair,
that $f$ is a monomorphism and $g$ an epimorphism. In such a pure exact sequence $f$ is called a \emph{pure
monomorphism} and $g$ a \emph{pure epimorphism}. 

Any finitely accessible additive category $\mathcal{C}$ may be embedded as a full
subcategory of the functor category $(\fp(\mathcal{C})^{\op},\Ab)$ of all contravariant additive functors from the full
subcategory $\fp(\mathcal{C})$ of finitely presented objects of $\mathcal{C}$ to the category ${\rm Ab}$ of abelian
groups. In this way the pure exact sequences in $\mathcal{C}$ are those which become exact sequences in
$(\fp(\mathcal{C})^{\op},\Ab)$ through the embedding, and $\mathcal{C}$ may be seen as being equivalent to the full
subcategory of flat objects from $(\fp(\mathcal{C})^{\op},\Ab)$. The relevant functor inducing the equivalence is the
covariant Yoneda functor $H:\mathcal{C}\to (\fp(\mathcal{C})^{\op},\Ab)$, given on objects by $X\mapsto
H_X=\Hom_{\mathcal{C}}(-,X)\mid_{\fp(\mathcal{C})}$ and correspondingly on morphisms. Then $H$ preserves and reflects
purity. Moreover, the pure exact sequences define an exact structure in any finitely accessible additive category
$\mathcal{C}$, because $\mathcal{C}$ is equivalent to an extension closed full subcategory of an abelian category (e.g.,
see \cite[Lemma~10.20]{Buhler}). Note that every category of modules over a ring with identity, and more generally,
every functor category is finitely accessible.  

Recall some purity-related concepts in finitely accessible additive categories \cite{C14,Prest} and Grothendieck
categories \cite{St68,Xu}. 

\begin{defi} \rm Let $\mathcal{C}$ be a finitely accessible additive category. An object $X$ of $\mathcal{C}$ is called:
\begin{enumerate}
\item \emph{pure-injective} if it is injective with respect to every pure monomorphism in $\mathcal{C}$.
\item \emph{absolutely pure} if every monomorphism $X\to Y$ in $\mathcal{C}$ is pure. 
\end{enumerate}
\end{defi}

\begin{defi} \rm Let $\mathcal{G}$ be a Grothendieck category. An object $M$ of $\mathcal{G}$ is called:
\begin{enumerate}
\item \emph{flat} if every epimorphism $A\to M$ in $\mathcal{G}$ is pure.
\item \emph{cotorsion} if ${\rm Ext}^1_{\mathcal{G}}(A,M)=0$ for every flat object $A$ of $\mathcal{G}$.
\item \emph{weakly absolutely pure} if every monomorphism $M\to A$ in $\mathcal{G}$ with $A$ flat is pure.
\end{enumerate}
\end{defi}

\begin{remark} \rm (1) Absolutely pure objects of a Grothendieck category $\mathcal{G}$ coincide with its
$\mathcal{D}$-$\mathcal{E}$-divisible objects, where $\mathcal{D}$ is the exact structure given by all short exact
sequences and $\mathcal{E}$ is the exact structure given by all pure exact sequences.

(2) Weakly absolutely pure objects of a Grothendieck category $\mathcal{G}$ coincide with its
$\mathcal{D}$-$\mathcal{E}$-$\mathcal{A}$-divisible objects, where $\mathcal{D}$ is the exact structure given by all
short exact sequences, $\mathcal{E}$ is the exact structure given by all pure exact sequences and $\mathcal{A}$ is the
class of flat objects.
\end{remark}

Now we may deduce the following known characterization of absolutely pure objects in finitely accessible abelian
categories.

\begin{prop} \cite[Proposition~5.6]{Prest} Let $\mathcal{C}$ be a finitely accessible abelian category. The following
are equivalent for an object $X$ of $\mathcal{C}$:
\begin{enumerate}[(i)]
 \item $X$ is absolutely pure.
 \item The pure-injective envelope of $X$ is injective.
 \item $X$ is a pure subobject of an injective object. 
 \item $X$ is a pure subobject of an absolutely pure object. 
\end{enumerate}
\end{prop}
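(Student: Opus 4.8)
The plan is to exploit that a finitely accessible abelian category is a locally finitely presented Grothendieck category, so that $\mathcal{C}$ has enough injectives, injective envelopes, and (as in any finitely accessible additive category) pure-injective envelopes; throughout I would use the Remark to read absolute purity as purity in every overobject, together with the two standing facts that a split monomorphism is pure and that the pure monomorphisms constitute an exact structure on $\mathcal{C}$. I would first dispose of the easy links. The implication (iii)$\Rightarrow$(iv) is immediate, since any monomorphism out of an injective object splits and is therefore pure, so injective objects are absolutely pure. For (ii)$\Rightarrow$(iii), the defining pure monomorphism $X\to PE(X)$ exhibits $X$ as a pure subobject of the injective object $PE(X)$. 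For (i)$\Rightarrow$(iii), I would embed $X$ into its injective envelope $E(X)$, absolute purity of $X$ making this monomorphism pure.

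The first substantive step is (iv)$\Rightarrow$(i). Given a pure monomorphism $u\colon X\to A$ with $A$ absolutely pure and an arbitrary monomorphism $f\colon X\to Y$, I would form the pushout of $u$ and $f$, obtaining monomorphisms $A\to P$ and $g\colon Y\to P$ with a common composite $X\to P$. Since $A$ is absolutely pure, $A\to P$ is pure, hence so is the composite $X\to A\to P=gf$, being a composition of pure monomorphisms. It then remains to show that if $gf$ is pure and $g$ is a monomorphism then $f$ is pure; this I would verify on the level of finitely presented objects, lifting a morphism $F\to Y/X$, viewed inside $P/X$, into $P$ by purity of $gf$ and observing that it factors through $Y=\Ker(P\to P/Y)$ because $Y/X$ is the kernel of $P/X\to P/Y$.

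The main obstacle is (i)$\Rightarrow$(ii), which forces me to use the theory of pure-essential extensions: the inclusion $X\to PE(X)$ is not merely pure but pure-essential. Assuming $X$ absolutely pure, the inclusion $X\to E(X)$ into the injective envelope is pure, and since $E(X)$ is pure-injective this inclusion extends to a morphism $h\colon PE(X)\to E(X)$ restricting to the inclusion on $X$; pure-essentiality then upgrades $h$ to a pure monomorphism. Thus $PE(X)$ is a pure subobject of the injective, hence absolutely pure, object $E(X)$, so by (iv)$\Rightarrow$(i) already proved $PE(X)$ is absolutely pure. Finally I would record the lemma that a pure-injective absolutely pure object is injective: its pure inclusion into its injective envelope splits by pure-injectivity, making it a direct summand of an injective object. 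Applying this to $PE(X)$ gives that $PE(X)$ is injective, which is (ii). The delicate points throughout are the correct handling of pure-essentiality and the guaranteed existence of pure-injective and injective envelopes, which is where I expect the real work to lie.
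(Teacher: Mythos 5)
Your proof is correct in substance, but it follows a genuinely different route from the paper's. The paper's proof is a two-line specialization of the machinery of \cite{CK1}: take $\mathcal{D}$ to be the exact structure of all short exact sequences (available because $\mathcal{C}$ is abelian), $\mathcal{E}\subseteq\mathcal{D}$ the pure exact structure, and the $\mathcal{D}$-perfect $\mathcal{D}$-cotorsion pair $(\mathcal{A},\mathcal{B})$ with $\mathcal{A}$ the class of all objects and $\mathcal{B}$ the class of pure-injective objects (perfection encoding the existence of pure-injective envelopes \cite{Herzog}); after identifying the absolutely pure objects with the $\mathcal{D}$-$\mathcal{E}$-$\mathcal{A}$-divisible ones, everything is quoted from \cite[Theorem~4.9]{CK1}. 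You instead reprove the equivalences from first principles: the pushout-plus-cancellation argument for (iv)$\Rightarrow$(i) and the envelope argument for (i)$\Rightarrow$(ii) are exactly the steps that the cited theorem packages away in the abstract setting of relative cotorsion pairs. What you gain is a self-contained argument relying only on standard facts about purity, injective envelopes (which exist because a finitely accessible abelian category is a locally finitely presented Grothendieck category---a fact the paper also uses implicitly), and pure-injective envelopes; what the paper gains is brevity and the demonstration that this result of \cite{Prest} is an instance of its general theory, which is the very point of the paper.

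One step of yours needs shoring up: the claim in (i)$\Rightarrow$(ii) that ``pure-essentiality then upgrades $h$ to a pure monomorphism.'' Under the usual (Warfield-type) definition of pure-essential extension, pure-essentiality of $X\to PE(X)$ yields only that $h$ is a monomorphism, and a plain monomorphism into $E(X)$ is not enough to feed into your (iv)$\Rightarrow$(i). The standard repair bypasses pure-essentiality and uses the envelope property of $PE(X)$ instead: since $X\to E(X)$ is pure (by absolute purity of $X$) and $PE(X)$ is pure-injective, there is also a morphism $k\colon E(X)\to PE(X)$ restricting to the canonical map on $X$; then $kh$ restricts to the canonical pure monomorphism $X\to PE(X)$, hence is an automorphism of $PE(X)$ by minimality of the envelope, so $h$ is a split monomorphism. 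This gives more than you asked for: $PE(X)$ is a direct summand of the injective object $E(X)$, hence injective outright, so the detour through (iv)$\Rightarrow$(i) and your closing lemma (pure-injective plus absolutely pure implies injective) becomes unnecessary. With this correction the whole argument stands.
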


\begin{proof} Consider the exact structures on $\mathcal{C}$ given by the classes $\mathcal{D}$ of all short exact
sequences and $\mathcal{E}\subseteq \mathcal{D}$ of pure exact sequences, and the $\mathcal{D}$-perfect
$\mathcal{D}$-cotorsion pair $(\mathcal{A},\mathcal{B})$ in $\mathcal{C}$, where $\mathcal{A}$ is the
class of all objects and $\mathcal{B}$ is the class of pure-injective objects of $\mathcal{C}$. Then the class of 
$\mathcal{D}$-$\mathcal{E}$-$\mathcal{A}$-divisible objects of $\mathcal{C}$ coincides with the class of absolutely pure objects.
Now use \cite[Theorem~4.9]{CK1} (see also \cite[Proposition~4.6]{CK1}). 
\end{proof}

We are interested in showing a similar result in the case of arbitrary finitely accessible additive categories. In the
above proposition we have essentially used the fact that all short exact sequences yield an exact structure on any
abelian category. Such a result is no longer true if the category is not (at least) quasi-abelian (see \cite[Example~2.3]{CK1}). 
In order to be able to remove the hypothesis on $\mathcal{C}$ to be abelian, we need the following
proposition on weakly absolutely pure objects in functor categories, which can be immediately deduced from our
results. Note that it was proved directly in \cite{C14}.

\begin{prop} \cite[Theorem~3.1]{C14} \label{p:charmod} Let $\mathcal{S}$ be a small preadditive category. The following
are equivalent for an object $K$ of $(\mathcal{S}^{\op},\Ab)$:
\begin{enumerate}[(i)]
 \item $K$ is weakly absolutely pure.
 \item The cotorsion envelope of $K$ is weakly absolutely pure.
 \item There is a pure exact sequence $0\to K\to M\to N\to 0$ with $M$ weakly absolutely pure cotorsion and $N$ flat. 
 \item There is a pure exact sequence $0\to K\to M\to N\to 0$ with $M$ weakly absolutely pure and $N$ flat. 
\end{enumerate}
\end{prop}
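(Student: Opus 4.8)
The plan is to repeat the argument from the proof of the preceding proposition, replacing the cotorsion pair of all objects and all pure-injective objects used there by the flat cotorsion pair. I would work in the Grothendieck category $\mathcal{G}=(\mathcal{S}^{\op},\Ab)$, which is the category of right modules over the ring with several objects $\mathcal{S}$ and is therefore a locally finitely presented Grothendieck category with enough projectives; in particular it is a finitely accessible additive category, so by the discussion at the beginning of Section~2 its pure exact sequences form an exact structure. I would equip $\mathcal{G}$ with the exact structure $\mathcal{D}$ of all short exact sequences, its exact substructure $\mathcal{E}\subseteq\mathcal{D}$ of pure exact sequences, the class $\mathcal{A}$ of flat objects and the class $\mathcal{B}$ of cotorsion objects, so that $(\mathcal{A},\mathcal{B})$ is the flat cotorsion pair.

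The key step is then to set up the translation supplied by Remark~(2): the $\mathcal{D}$-$\mathcal{E}$-$\mathcal{A}$-divisible objects of $\mathcal{G}$ are exactly the weakly absolutely pure ones, the $\mathcal{B}$-envelope is the cotorsion envelope, membership in $\mathcal{A}$ is flatness, and an $\mathcal{E}$-conflation is a pure exact sequence. Under this dictionary the four conditions become verbatim instances of those in \cite[Theorem~4.9]{CK1}: condition (i) is $\mathcal{D}$-$\mathcal{E}$-$\mathcal{A}$-divisibility of $K$; (ii) is the same property for the $\mathcal{B}$-envelope of $K$; (iii) requests an $\mathcal{E}$-conflation $0\to K\to M\to N\to 0$ with $M$ being $\mathcal{D}$-$\mathcal{E}$-$\mathcal{A}$-divisible, $M\in\mathcal{B}$ and $N\in\mathcal{A}$; and (iv) is (iii) with the requirement $M\in\mathcal{B}$ dropped. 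I would then conclude by invoking \cite[Theorem~4.9]{CK1}, using \cite[Proposition~4.6]{CK1} for any implications not covered directly, exactly as in the previous proof.

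The step I expect to be the main obstacle is confirming that $(\mathcal{A},\mathcal{B})$ is a $\mathcal{D}$-perfect $\mathcal{D}$-cotorsion pair, that is, that every object of $\mathcal{G}$ admits a flat cover and a cotorsion envelope relative to all short exact sequences. For genuine module categories this is the Bican--El Bashir--Enochs theorem; in the functor category $(\mathcal{S}^{\op},\Ab)$ one needs the corresponding statement for modules over a ring with several objects, where flat covers exist because the flat class is closed under direct limits and the flat cotorsion pair is complete, hence perfect. A lighter point to verify is that the flat class $\mathcal{A}$ satisfies the closure hypotheses on the left-hand class that are required to apply \cite[Theorem~4.9]{CK1}. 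Once the perfectness is in place, the equivalence of (i)--(iv) follows at once, reproving \cite[Theorem~3.1]{C14}.
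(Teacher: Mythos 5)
Your proposal is correct and follows essentially the same route as the paper: both work in $(\mathcal{S}^{\op},\Ab)$ with $\mathcal{D}$ all short exact sequences, $\mathcal{E}$ the pure exact sequences, and the $\mathcal{D}$-perfect $\mathcal{D}$-cotorsion pair of flat and cotorsion objects, identify weakly absolutely pure objects with the $\mathcal{D}$-$\mathcal{E}$-$\mathcal{A}$-divisible ones, and then invoke \cite[Theorem~4.9]{CK1}. The perfectness issue you flag is exactly what the paper settles by citing \cite[Theorem~3]{BEE} and \cite[Corollary~3.3]{CPT}, so there is no gap.
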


\begin{proof} Consider the exact structures on $(\mathcal{S}^{\rm op},\Ab)$ given by the classes $\mathcal{D}$ of all
short exact sequences and $\mathcal{E}\subseteq \mathcal{D}$ of pure exact sequences, and the $\mathcal{D}$-perfect
$\mathcal{D}$-cotorsion pair $(\mathcal{F},\mathcal{C})$ in $(\mathcal{S}^{\rm op},\Ab)$ \cite[Theorem~3]{BEE} (also
see \cite[Corollary~3.3]{CPT}), where $\mathcal{F}$ is the class of flat objects and $\mathcal{C}$ is the class of
cotorsion objects. Note that $\mathcal{F}$ coincides with the class of $\mathcal{D}$-$\mathcal{E}$-flat objects in
$(\mathcal{S}^{\rm op},\Ab)$ and weakly absolutely pure objects are the same as
$\mathcal{D}$-$\mathcal{E}$-$\mathcal{A}$-divisible objects, where $\mathcal{A}=\mathcal{F}$. Then use \cite[Theorem~4.9]{CK1}. 
\end{proof}

We recall the following theorem, which relates purity-related properties of objects of a finitely accessible
additive category and of objects of the associated functor category. It shows why weakly absolutely pure objects appear
naturally in functor categories.

\begin{theor} \cite[Theorem~2.4]{C14} \label{t:equivclasses} Let $\mathcal{C}$ be a finitely accessible additive
category. The equivalence induced by the covariant Yoneda functor $H:\mathcal{C}\to (\fp(\mathcal{C})^{\op},\Ab)$
between $\mathcal{C}$ and the full subcategory of $(\fp(\mathcal{C})^{\op},\Ab)$ consisting of the flat objects
restricts to equivalences between the following full subcategories:
\begin{enumerate}[(i)]
\item pure-injective objects of $\mathcal{C}$ and cotorsion flat objects of $(\fp(\mathcal{C})^{\op},\Ab)$.
\item absolutely pure objects of $\mathcal{C}$ and weakly absolutely pure flat objects of
$(\fp(\mathcal{C})^{\op},\Ab)$.
\item injective objects of $\mathcal{C}$ and cotorsion weakly absolutely pure flat objects of
$(\fp(\mathcal{C})^{\op},\Ab)$.
\end{enumerate}
\end{theor}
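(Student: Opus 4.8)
The plan is to read off each correspondence from the embedding $H\colon\mathcal{C}\to\mathcal{G}$, where $\mathcal{G}:=(\fp(\mathcal{C})^{\op},\Ab)$ is a Grothendieck category, using the dictionary already recorded before the statement together with three standard facts about $\mathcal{G}$: the flat objects are closed under extensions, a short exact sequence in $\mathcal{G}$ whose cokernel is flat is automatically pure (this is the very definition of a flat object), and every flat object of $\mathcal{G}$ is isomorphic to some $H_Y$. Since $H$ identifies $\mathcal{C}$ with the full subcategory of flat objects and preserves and reflects purity, a sequence $0\to X\to Y\to Z\to 0$ is pure exact in $\mathcal{C}$ if and only if $0\to H_X\to H_Y\to H_Z\to 0$ is a short exact sequence of flat objects in $\mathcal{G}$. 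I would also first settle the translation of monomorphisms: a morphism $f$ is a monomorphism in $\mathcal{C}$ if and only if $\Hom_{\mathcal{C}}(P,X)\to\Hom_{\mathcal{C}}(P,Y)$ is injective for every finitely presented $P$ (for a general $W=\varinjlim P_i$ one has $\Hom_{\mathcal{C}}(W,-)=\varprojlim\Hom_{\mathcal{C}}(P_i,-)$ and inverse limits preserve injectivity), which is exactly the condition that $H_f$ be a monomorphism in $\mathcal{G}$. Thus monomorphisms $X\to Y$ in $\mathcal{C}$ correspond bijectively to monomorphisms $H_X\to A$ in $\mathcal{G}$ with $A$ flat. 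In every item the object $H_X$ is automatically flat, so only the remaining adjective has to be matched.

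For (i), I would use that $X$ is pure-injective precisely when every pure exact sequence $0\to X\to Y\to Z\to 0$ splits. Under the dictionary these correspond to short exact sequences $0\to H_X\to M\to A\to 0$ in $\mathcal{G}$ with $A$ flat; here $M$ is flat automatically because flat objects are closed under extensions, and conversely every such sequence with $A$ flat arises in this way since $A\cong H_Y$. Hence $X$ is pure-injective if and only if all such sequences split, that is, if and only if ${\rm Ext}^1_{\mathcal{G}}(A,H_X)=0$ for every flat $A$, which is exactly the statement that $H_X$ is a cotorsion flat object.

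For (ii), the translation of monomorphisms does the work directly: $X$ is absolutely pure if and only if every monomorphism $X\to Y$ in $\mathcal{C}$ is pure, which by the two correspondences above holds if and only if every monomorphism $H_X\to A$ in $\mathcal{G}$ with $A$ flat is pure, and this is precisely weak absolute purity of $H_X$. So $X$ is absolutely pure if and only if $H_X$ is a weakly absolutely pure flat object.

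Finally, for (iii) I would reduce to (i) and (ii): combining them, $H_X$ is a cotorsion weakly absolutely pure flat object if and only if $X$ is both pure-injective and absolutely pure, so it remains to prove the intrinsic statement that an object of $\mathcal{C}$ is injective if and only if it is both pure-injective and absolutely pure. The forward direction is immediate, since an injective object has every outgoing monomorphism split (hence pure) and is a fortiori injective with respect to pure monomorphisms. For the converse, any monomorphism $X\to Y$ is pure by absolute purity and therefore splits by pure-injectivity, so every outgoing monomorphism of $X$ splits; one then concludes that $X$ is injective. I expect this last inference to be the main obstacle: in an abelian category it is the familiar fact that an object all of whose outgoing monomorphisms split is injective, but in a merely finitely accessible additive $\mathcal{C}$ it tacitly requires embeddings into injectives and pushouts along monomorphisms, which need not be available. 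I would resolve it by transporting the question through the Grothendieck category $\mathcal{G}$, where injective envelopes and pushouts exist and where $H$ carries the relevant lifting properties, so that injectivity can be tested against split outgoing monomorphisms; by contrast, the verifications in (i) and (ii) are routine once the dictionary is in place.
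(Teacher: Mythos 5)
The paper does not actually prove this statement; it recalls it from \cite[Theorem~2.4]{C14}, so your argument has to be judged on its own merits rather than against an internal proof. Your parts (i) and (ii) are correct and are the natural arguments: the dictionary (monomorphisms $X\to Y$ in $\mathcal{C}$ correspond to monomorphisms $H_X\to A$ with $A$ flat; pure exact sequences in $\mathcal{C}$ correspond to short exact sequences of flat objects in $\mathcal{G}=(\fp(\mathcal{C})^{\op},\Ab)$) is established correctly, and the translations into ``cotorsion'' and ``weakly absolutely pure'' then come out as you say. Two standard facts are used tacitly and should be made explicit: pushouts of pure monomorphisms along arbitrary morphisms exist in $\mathcal{C}$ because the pure exact sequences form an exact structure (this is what makes ``every pure inflation out of $X$ splits'' equivalent to pure-injectivity, needed in (i)($\Leftarrow$)), and pure epimorphic images of flat objects of $\mathcal{G}$ are flat (this is what lets a pure monomorphism $H_X\to H_Y$ in $\mathcal{G}$ descend to a pure exact sequence in $\mathcal{C}$, needed in (ii)($\Leftarrow$)).

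The genuine gap is in (iii), exactly at the step you flag, and your proposed repair does not close it. After (i) and (ii), your hypotheses on $X$ amount to: every monomorphism from $H_X$ \emph{into a flat object} of $\mathcal{G}$ is pure, hence splits. To deduce injectivity by the pushout argument you must deal with monomorphisms of $H_X$ into \emph{non-flat} objects: for a monomorphism $H_A\to H_B$ and a morphism $H_A\to H_X$, the pushout $P$ in $\mathcal{G}$ is flat only when the cokernel of $H_A\to H_B$ is flat, i.e.\ only when $H_A\to H_B$ was already pure --- the case already settled by pure-injectivity. Likewise $E(H_X)$ is not flat, so weak absolute purity says nothing about $H_X\to E(H_X)$, and the classical module-theoretic proof (embed in an injective, the embedding is pure, hence splits) transports to neither $\mathcal{C}$ (no enough injectives) nor $\mathcal{G}$. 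Thus ``injectivity can be tested against split outgoing monomorphisms'' after passing to $\mathcal{G}$ is not a resolution: it is the unproved assertion relocated, since the outgoing monomorphisms that the test requires (into pushouts, into injective envelopes) are precisely the ones your splitting hypothesis does not cover. Concretely, what must be shown is that ${\rm Ext}^1_{\mathcal{G}}(C,H_X)=0$ for every $C$ that is a cokernel of a monomorphism between flat objects, and this cannot be formal: in the example $\mathcal{C}=$ torsion-free abelian groups (so $\mathcal{G}\simeq {\rm Ab}$ and $H$ is the inclusion), the $p$-adic integers $\mathbb{Z}_p$ are flat and cotorsion --- so every \emph{pure} monomorphism out of them splits --- yet ${\rm Ext}^1(\mathbb{Z}/p\mathbb{Z},\mathbb{Z}_p)\cong \mathbb{Z}_p/p\mathbb{Z}_p\neq 0$ although $\mathbb{Z}/p\mathbb{Z}$ is the cokernel of the monomorphism $p:\mathbb{Z}\to\mathbb{Z}$ between flats. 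So weak absolute purity must be exploited against non-flat targets in some essential way, e.g.\ by lifting $H_X\to P$ along a flat cover $G\to P$ to get a monomorphism $H_X\to G$ into a flat object; this uses the flat cover theorem in functor categories \cite{BEE,CPT} (or the fact that flat cotorsion objects of $\mathcal{G}$ are pure-injective, via character duals, after which one must show the pushout extension is pure), and even then a nontrivial compatibility between the lifted maps has to be resolved. None of this is in your sketch, so (iii) remains unproved as written.
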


Now we are in a position to deduce a characterization of absolutely pure objects in finitely accessible additive
categories, which generalizes \cite[Proposition~5.6]{Prest} from finitely accessible abelian categories to finitely
accessible additive categories. Note that every object of a finitely accessible additive category does have a
pure-injective envelope \cite[Theorem~6]{Herzog}.

\begin{cor} \cite[Theorem~3.3]{C14} \label{c:charfac} Let $\mathcal{C}$ be a finitely accessible additive category. Then
the following are equivalent for an object $X$ of $\mathcal{C}$:
\begin{enumerate}[(i)]
 \item $X$ is absolutely pure.
 \item The pure-injective envelope of $X$ is injective.
 \item $X$ is a pure subobject of an injective object.
 \item $X$ is a pure subobject of an absolutely pure object.
\end{enumerate}
\end{cor}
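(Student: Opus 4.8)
The plan is to transport the problem from the finitely accessible additive category $\mathcal{C}$ to the functor category $(\fp(\mathcal{C})^{\op},\Ab)$ via the covariant Yoneda functor $H$, prove the equivalence of conditions there (where we have an honest Grothendieck category with arbitrary short exact sequences at our disposal), and then pull the result back to $\mathcal{C}$ using the fact that $H$ preserves and reflects purity. Concretely, $H$ identifies $\mathcal{C}$ with the full subcategory of flat objects of $(\fp(\mathcal{C})^{\op},\Ab)$, and by Theorem~\ref{t:equivclasses} it sends absolutely pure objects of $\mathcal{C}$ to weakly absolutely pure flat objects, pure-injective objects to cotorsion flat objects, and injective objects to cotorsion weakly absolutely pure flat objects of the functor category.

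First I would apply Theorem~\ref{t:equivclasses}(ii) to reduce the statement $X$ is absolutely pure to the statement that $H_X$ is a weakly absolutely pure flat object. Then I would invoke Proposition~\ref{p:charmod}, which characterizes weakly absolutely pure objects $K=H_X$ of $(\fp(\mathcal{C})^{\op},\Ab)$ in terms of the cotorsion envelope and in terms of pure exact sequences $0\to K\to M\to N\to 0$ with $M$ weakly absolutely pure (cotorsion) and $N$ flat. The key observation is that in our situation $K=H_X$ is itself flat, so in such a pure exact sequence the object $M$, being an extension of the flat object $N$ by the flat object $K$, is again flat; thus the weakly absolutely pure cotorsion object $M$ furnished by Proposition~\ref{p:charmod}(iii) is automatically a cotorsion weakly absolutely pure flat object, which by Theorem~\ref{t:equivclasses}(iii) corresponds precisely to an injective object of $\mathcal{C}$. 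Since $H$ reflects purity and the cotorsion envelope of the flat object $H_X$ is realized by a pure monomorphism, the monomorphism $X\to E$ corresponding to $H_X\to M$ is a pure monomorphism into an injective object $E$ of $\mathcal{C}$, and I would argue that this pure-injective $E$ is in fact the pure-injective envelope of $X$, yielding $(i)\Rightarrow(ii)$.

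For the remaining implications I would close the cycle as follows. The implication $(ii)\Rightarrow(iii)$ is immediate since the pure-injective envelope is a pure subobject, and $(iii)\Rightarrow(iv)$ holds because every injective object is absolutely pure (a monomorphism out of an absolutely pure object in the surrounding structure is forced to be pure). For $(iv)\Rightarrow(i)$ I would use that a pure subobject of an absolutely pure object is absolutely pure: if $X\to Y$ is a pure monomorphism with $Y$ absolutely pure, then any monomorphism $X\to Z$ factors appropriately and, composing with the purity of $X$ in $Y$ and the absolute purity of $Y$, one deduces that $X\to Z$ is pure. Translating these closure statements through $H$ into statements about weakly absolutely pure flat objects in the functor category, where they follow from the closure properties of the relevant divisible classes, keeps everything within the machinery already established.

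The main obstacle I anticipate is the careful bookkeeping in the implication $(i)\Rightarrow(ii)$: one must verify that the pure exact sequence supplied by Proposition~\ref{p:charmod} in the functor category has middle term that is not merely weakly absolutely pure cotorsion but genuinely flat (so that it corresponds to an object of $\mathcal{C}$ under the equivalence), and that the resulting pure monomorphism $X\to E$ is an envelope rather than just an embedding into an injective. The flatness of the middle term follows from closure of flat objects under extensions in the functor category (the $\mathcal{D}$-$\mathcal{E}$-flat objects form the left-hand class of a cotorsion pair and hence are extension-closed), while the envelope property requires matching the cotorsion envelope $H_X\to M$ in $(\fp(\mathcal{C})^{\op},\Ab)$ with the pure-injective envelope $X\to E$ in $\mathcal{C}$ guaranteed by \cite[Theorem~6]{Herzog}, using that $H$ reflects pure monomorphisms and the essentiality built into the envelope.
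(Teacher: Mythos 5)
Your proposal is correct and follows essentially the same route as the paper's own (very terse) proof: reduce via Theorem~\ref{t:equivclasses} and the purity-preserving/reflecting Yoneda embedding $H$ to the functor category, and apply Proposition~\ref{p:charmod} there, with the extra bookkeeping (flatness of the middle term via extension-closure of flat objects, and the identification of the cotorsion envelope of $H_X$ with the pure-injective envelope of $X$) being exactly the details the paper leaves implicit in its citation of these two results. The only caveat is that your direct in-$\mathcal{C}$ sketch of (iv)$\Rightarrow$(i) is too vague to stand on its own, but the functor-category translation you also propose (using that a pure quotient of a flat object is flat, so Proposition~\ref{p:charmod}(iv) applies) handles it correctly.
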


\begin{proof} This follows by Proposition \ref{p:charmod} and Theorem \ref{t:equivclasses}, using the fact that purity
is preserved and reflected by the covariant Yoneda functor $H:\mathcal{C}\to (\fp(\mathcal{C})^{\op},\Ab)$.
\end{proof}

\section{Applications to module categories}

Throughout this section ${\rm Mod}(R)$ denotes the category of right modules over a ring $R$ with identity. We may
consider projectively generated and injectively generated exact structures $\mathcal{E}_H^{\mathcal{M}}$ and
$\mathcal{E}_G^{\mathcal{M}}$ (see \cite[Proposition~2.6]{CK1}). Note that short exact sequences in
$\mathcal{E}_H^{\mathcal{M}}$ and $\mathcal{E}_G^{\mathcal{M}}$ are called $\mathcal{M}$-{\it pure}
and $\mathcal{M}$-{\it copure} (see \cite[Chapter~7]{Wisb}). 

One may also consider another particular generated exact structure on ${\rm Mod}(R)$ (e.g., see
\cite[Section~4]{Montano} and \cite{St67}), as follows.

\begin{prop} \label{p:flatgen} For any right $R$-module $M$, consider the additive functor $T=M\otimes_R-:{\rm
Mod}(R^{\rm op})\to {\rm Ab}$. For a class $\mathcal{M}$ of right $R$-modules, denote
$$\mathcal{E}_T^{\mathcal{M}}=\bigcap \{\mathcal{E}_T \mid M\in \mathcal{M}\}.$$ Then $\mathcal{E}_T^{\mathcal{M}}$
is an exact structure on ${\rm Mod}(R^{\rm op})$, called the exact structure {\it flatly generated by $\mathcal{M}$}.
\end{prop}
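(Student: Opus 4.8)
The plan is to show that $\mathcal{E}_T^{\mathcal{M}}$ is an exact structure by exhibiting it as a projectively generated exact structure in disguise, and then invoking the general machinery already available. First I would recall that for a single functor $T = M \otimes_R -$, the class $\mathcal{E}_T$ of short sequences in $\mathrm{Mod}(R^{\mathrm{op}})$ that $T$ sends to short exact sequences of abelian groups is itself an exact structure; this is the standard injectively/functor-generated construction, and the key point is that $T$ is additive and right exact, so $\mathcal{E}_T$ is precisely the class of sequences rendered exact by tensoring with $M$ (equivalently, kept left-exact, since $T$ already preserves right exactness). The intersection over $M \in \mathcal{M}$ of a family of exact structures is again an exact structure, because the defining closure axioms (Quillen's axioms for admissible monics and epics: containing split sequences, closure under composition and pushout/pullback) are each preserved under arbitrary intersection of classes of kernel-cokernel pairs. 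This is the cleanest route and avoids reproving axioms from scratch.

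More concretely, I would proceed in the following steps. First establish that each $\mathcal{E}_T$ is an exact structure on $\mathrm{Mod}(R^{\mathrm{op}})$: a short exact sequence in $\mathrm{Mod}(R^{\mathrm{op}})$ lies in $\mathcal{E}_T$ exactly when $T$ applied to it remains short exact in $\mathrm{Ab}$, and since $M \otimes_R -$ is additive this is an instance of the functor-generated exact structures appearing in \cite[Proposition~2.6]{CK1} and \cite[Section~4]{Montano}. Second, verify that $\mathcal{E}_T$ so defined indeed satisfies the Quillen axioms; this is where the right-exactness of the tensor functor does the real work, since it guarantees that $T$ of an admissible epic stays epic automatically, so the only genuine condition is preservation of the monic part. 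Third, observe that an arbitrary intersection $\bigcap_{M \in \mathcal{M}} \mathcal{E}_T$ of exact structures on a fixed additive (indeed exact) category is an exact structure: a sequence is admissible in the intersection iff it is admissible in every member, and each Quillen axiom, being a universally quantified closure condition, descends to the intersection.

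The main obstacle I anticipate is not the intersection step, which is formal, but the verification that a single $\mathcal{E}_T$ genuinely forms an exact structure in the sense of Quillen, because tensoring is covariant and right exact rather than left exact, so the roles of monics and epics are not symmetric with the injectively generated case $\mathcal{E}_G^{\mathcal{M}}$. I would therefore take care to check that the class of admissible short exact sequences is closed under the pushout of an admissible monic along an arbitrary morphism and the pullback of an admissible epic, using that $T$ preserves pushouts (being a left adjoint, as $M \otimes_R -$ is) to transport these diagrams into $\mathrm{Ab}$ where exactness is clear. Once closure under pushouts and pullbacks and under composition is in hand, together with the presence of all split short exact sequences (which every additive functor preserves), the axioms of an exact structure follow. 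Since these properties are stable under intersection, $\mathcal{E}_T^{\mathcal{M}}$ is an exact structure on $\mathrm{Mod}(R^{\mathrm{op}})$, as claimed.
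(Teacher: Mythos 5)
The paper does not actually prove Proposition \ref{p:flatgen}: the statement is given without a proof environment, the preceding sentence pointing instead to \cite[Section~4]{Montano} and \cite{St67}, where such classes of sequences kept exact by tensor functors are handled in the language of proper classes (which, on a module category, amounts to Quillen's axioms). Your two-step architecture --- first show that a single $\mathcal{E}_T$ is an exact structure, then observe that an intersection of exact structures on a fixed category is again one --- is precisely the argument hiding behind those citations, and the intersection step is formal for the reason you give: admissibility with respect to the intersection means admissibility with respect to every member, kernels, cokernels and pushouts are unique up to isomorphism, and each axiom is a closure condition that descends to intersections. So in substance you are supplying the proof the paper omits rather than diverging from it.

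Two points in your treatment of the single-functor case need repair. First, the pullback axiom: your stated tool is that $T=M\otimes_R-$ preserves pushouts, but that does not transport a pullback of an admissible epic into ${\rm Ab}$, since a left adjoint does not preserve pullbacks. (One can note that a pullback square of an epimorphism in an abelian category is bicartesian, so $T$ does preserve it qua pushout, but even then the transported square alone does not yield the required injectivity.) The clean argument is a factorization: if $0\to X\to Y\to Z\to 0$ lies in $\mathcal{E}_T$ and $Q=Y\times_Z W$ is the pullback along $W\to Z$, then the kernel of $Q\to W$ is $X$ and the composite $X\to Q\to Y$ is the original inclusion; applying $T$, the composite $TX\to TQ\to TY$ is injective, hence so is $TX\to TQ$, which by right exactness of $T$ is all that the axiom requires. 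A similarly small chase (or the observation that a sequence lies in $\mathcal{E}_T$ if and only if ${\rm Tor}_1^R(M,Y)\to {\rm Tor}_1^R(M,Z)$ is surjective, after which functoriality does everything) is needed for composition of admissible epics, which your sketch passes over. Second, your opening promise to exhibit $\mathcal{E}_T^{\mathcal{M}}$ as ``a projectively generated exact structure in disguise'' is never realized, and citing \cite[Proposition~2.6]{CK1} for the single-functor case is not legitimate: $M\otimes_R-$ is not of the form ${\rm Hom}(M,-)$ or ${\rm Hom}(-,M)$, which is exactly why the paper states this proposition separately. The genuine reduction of that kind goes through character modules, namely $E\in\mathcal{E}_T^{\mathcal{M}}$ if and only if $E^+\in\mathcal{E}_H^{\mathcal{M}}$, which is Sklyarenko's theorem \cite[Theorem~8.1]{Skly} quoted later in Section~3; if you wanted that route you would still have to check that the exact-structure axioms pass through the contravariant functor $(-)^+$. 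Your direct verification, with the pullback step fixed as above, is the cleaner way.
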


\begin{prop} \label{p:Tor} Let $\mathcal{E}$ be an exact structure on ${\rm Mod}(R^{\rm op})$ such that $\mathcal{E}$ is
flatly generated by a class $\mathcal{M}$ of right $R$-modules. Then a left $R$-module $Z$ is $\mathcal{E}$-flat if and
only if ${\rm Tor}_1^R(\mathcal{M},Z)=0$.  
\end{prop}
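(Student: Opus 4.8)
The plan is to reduce everything to the long exact sequence in ${\rm Tor}$ after unwinding the two relevant definitions. First I would record, via Proposition~\ref{p:flatgen}, that a short exact sequence $0\to X\to Y\to W\to 0$ of left $R$-modules lies in $\mathcal{E}=\mathcal{E}_T^{\mathcal{M}}$ exactly when $0\to M\otimes_R X\to M\otimes_R Y\to M\otimes_R W\to 0$ is exact for every $M\in\mathcal{M}$; since $M\otimes_R-$ is right exact, this reduces to the injectivity of $M\otimes_R X\to M\otimes_R Y$ for all $M\in\mathcal{M}$. Next I would unwind relative flatness from \cite{CK1} (taken with respect to the exact structure of all short exact sequences): $Z$ is $\mathcal{E}$-flat precisely when every short exact sequence of left $R$-modules with third term $Z$ belongs to $\mathcal{E}$. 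These two reformulations are the starting point.

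For the implication starting from ${\rm Tor}_1^R(M,Z)=0$ for all $M\in\mathcal{M}$, I would take an arbitrary short exact sequence $0\to X\to Y\to Z\to 0$ and, for each $M\in\mathcal{M}$, invoke the long exact sequence
$$\cdots\to {\rm Tor}_1^R(M,Z)\to M\otimes_R X\to M\otimes_R Y\to M\otimes_R Z\to 0.$$
Here the kernel of $M\otimes_R X\to M\otimes_R Y$ is the image of the connecting map emanating from ${\rm Tor}_1^R(M,Z)=0$, hence is zero, so the sequence lies in $\mathcal{E}$. As this holds for every short exact sequence with third term $Z$, I would conclude that $Z$ is $\mathcal{E}$-flat.

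For the reverse implication I would fix a projective presentation $0\to K\to P\to Z\to 0$ with $P$ free, hence flat. Since $Z$ is $\mathcal{E}$-flat, this particular sequence already lies in $\mathcal{E}$, so $M\otimes_R K\to M\otimes_R P$ is injective for every $M\in\mathcal{M}$; combining this with ${\rm Tor}_1^R(M,P)=0$ in the long exact sequence yields ${\rm Tor}_1^R(M,Z)\cong\ker(M\otimes_R K\to M\otimes_R P)=0$. The homological bookkeeping here is routine; the main point requiring care, and the only real obstacle, is the passage between the two descriptions of $\mathcal{E}$-flatness, together with the observation that the forward direction needs only a single projective presentation while the converse must handle every short exact sequence ending in $Z$ uniformly, which is exactly what the vanishing of ${\rm Tor}_1^R(M,Z)$ supplies through the connecting homomorphism.
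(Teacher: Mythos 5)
Your proof is correct, and its core computation (the long exact sequence of $\mathrm{Tor}$ applied to short exact sequences ending in $Z$) is the same as the paper's; the difference lies in how the converse direction concludes. The paper works with a \emph{single} short exact sequence $0\to X\to Y\to Z\to 0$ with $Y$ projective in both directions: for the converse it shows that this one presentation is an $\mathcal{E}_T^{\mathcal{M}}$-conflation and then asserts that $Z$ is $\mathcal{E}$-flat, a step which implicitly invokes the characterization of relatively flat objects from \cite[Proposition~4.6]{CK1} (an object admitting an $\mathcal{E}$-deflation from a projective is $\mathcal{E}$-flat). You instead verify the defining property directly: for \emph{every} short exact sequence ending in $Z$, the vanishing of $\mathrm{Tor}_1^R(M,Z)$ kills the connecting homomorphism, so the sequence stays exact after tensoring with each $M\in\mathcal{M}$ and hence lies in $\mathcal{E}$. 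Your route is more self-contained, relying only on the definition of $\mathcal{E}$-flatness (every short exact sequence with third term $Z$ is an $\mathcal{E}$-conflation) and standard homological algebra, and it makes explicit the asymmetry that the forward implication needs one projective presentation while the converse must treat all sequences uniformly; the paper's route buys brevity by outsourcing exactly that uniformity to the machinery of \cite{CK1}. Both arguments are valid, and your observation that exactness of $0\to M\otimes_R X\to M\otimes_R Y\to M\otimes_R W\to 0$ reduces, by right exactness of $M\otimes_R-$, to injectivity of the first map is the correct way to pass between membership in $\mathcal{E}_T^{\mathcal{M}}$ and the $\mathrm{Tor}$ sequence.
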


\begin{proof} Consider a short exact sequence $0\to X\to Y\to Z\to 0$ of left $R$-modules
with $Y$ projective. For every $M\in \mathcal{M}$, it induces the exact sequence \[0={\rm Tor}_1^R(M,Y)\to {\rm
Tor}_1^R(M,Z)\to M\otimes_R X\to M\otimes_R Y\to M\otimes_R Z\to 0\]

Assume first that $Z$ is $\mathcal{E}$-flat. Then $0\to X\to Y\to Z\to 0$ is an $\mathcal{E}$-conflation. Since
$\mathcal{E}=\mathcal{E}^{\mathcal{M}}_T$, it induces a short exact sequence $0\to M\otimes_R X\to M\otimes_R Y\to
M\otimes_R Z\to 0$ for every $M\in \mathcal{M}$. From the above exact sequence, it follows that ${\rm
Tor}_1^R(M,Z)=0$ for every $M\in \mathcal{M}$. 

Conversely, assume that ${\rm Tor}_1^R(\mathcal{M},Z)=0$. Then we have the short exact sequence $0\to
M\otimes_R X\to M\otimes_R Y\to M\otimes_R Z\to 0$ for every $M\in \mathcal{M}$. This shows that the short exact
sequence $0\to X\to Y\to Z\to 0$ is an $\mathcal{E}^{\mathcal{M}}_T$-conflation. This implies that $Z$ is 
$\mathcal{E}$-flat.
\end{proof}

Let us see some of the most important examples of exact structures on module categories.

\begin{exam} \label{e:exact} \rm (1) Let $\mathcal{M}$ be the class of finitely presented right $R$-modules. 

The $\mathcal{M}$-pure short exact sequences in $\Mod(R)$ are called \emph{pure} \cite[p.~281]{Wisb}. Let $\mathcal{D}$
and $\mathcal{E}$ be the exact structures given by short exact sequences and pure short exact sequences in ${\rm
Mod}(R)$ respectively. Then $\mathcal{D}$-$\mathcal{E}$-divisible and $\mathcal{D}$-$\mathcal{E}$-flat objects coincide
with absolutely pure and flat right $R$-modules respectively. Note that the exact structure of pure short exact
sequences is also injectively generated by the class of pure-injective modules \cite[34.7]{Wisb}.

The $\mathcal{M}$-copure short exact sequences in $\Mod(R)$ are called \emph{copure} (note the different terminology
with respect to \cite[p.~322]{Wisb}, where $\mathcal{M}$ is the class of finitely copresented right $R$-modules). Let
$\mathcal{D}$ and $\mathcal{E}$ be the exact structures given by short exact sequences and copure short exact sequences
in ${\rm Mod}(R)$ respectively. Then $\mathcal{D}$-$\mathcal{E}$-divisible and $\mathcal{D}$-$\mathcal{E}$-flat objects
will be called \emph{absolutely copure} and \emph{coflat} right $R$-modules respectively.

The short exact sequences in $\Mod(R^{\rm op})$ which give the exact structure flatly generated by the
class of finitely presented (or all) right $R$-modules coincide with pure short exact sequences \cite[34.5]{Wisb}.

(2) Let $\mathcal{M}$ be the class of finitely generated right $R$-modules. 

The $\mathcal{M}$-pure short exact sequences in $\Mod(R)$ are called \emph{finitely split} \cite{Az87}. Let
$\mathcal{D}$ and $\mathcal{E}$ be the exact structures given by pure (all) short exact sequences and
finitely split short exact sequences in ${\rm Mod}(R)$ respectively. Then $\mathcal{D}$-$\mathcal{E}$-divisible and
$\mathcal{D}$-$\mathcal{E}$-flat objects coincide with finitely pure-injective (finitely injective) and finitely
pure-projective (finitely projective) right $R$-modules respectively \cite[p.~115]{Az87}.

The $\mathcal{M}$-copure short exact sequences in $\Mod(R)$ will be called \emph{cofinitely split}. Let $\mathcal{D}$
and $\mathcal{E}$ be the exact structures given by pure (all) short exact sequences and cofinitely split short exact
sequences in ${\rm Mod}(R)$ respectively. Then $\mathcal{D}$-$\mathcal{E}$-divisible and
$\mathcal{D}$-$\mathcal{E}$-flat objects will be called \emph{cofinitely pure-injective} (\emph{cofinitely injective})
and \emph{cofinitely pure-projective} (\emph{cofinitely projective}) right $R$-modules respectively.

The short exact sequences in $\Mod(R^{\rm op})$ which give the exact structure flatly generated by the
class of finitely generated (or all) right $R$-modules coincide with pure short exact sequences \cite[34.5]{Wisb}.

(3) Let $\mathcal{M}$ be the class of (semi)simple modules right $R$-modules. 

The $\mathcal{M}$-pure short exact sequences in $\Mod(R)$ are called \emph{neat} \cite{Fuc}. Let $\mathcal{D}$ and
$\mathcal{E}$ be the exact structures given by short exact sequences and neat short exact sequences in ${\rm Mod}(R)$
respectively. Then $\mathcal{D}$-$\mathcal{E}$-divisible and $\mathcal{D}$-$\mathcal{E}$-flat objects coincide
with absolutely neat \cite{Sep} and neat-flat \cite{BD15} right $R$-modules respectively. Note that by
\cite[Theorem~3.4]{Sep} absolutely neat right $R$-modules coincide with $m$-injective right $R$-modules in the sense of
\cite{S98}, that is, right $R$-modules which are injective with respect to every short exact sequence of the form $0\to
I\to R\to R/I\to 0$ with $I$ a maximal right ideal of $R$.  

The $\mathcal{M}$-copure short exact sequences in $\Mod(R)$ are called \emph{coneat} by Fuchs \cite{Fuc}. Let
$\mathcal{D}$ and $\mathcal{E}$ be the exact structures given by short exact sequences and coneat short exact sequences
in ${\rm Mod}(R)$ respectively. Then $\mathcal{D}$-$\mathcal{E}$-divisible and $\mathcal{D}$-$\mathcal{E}$-flat
objects coincide with absolutely coneat \cite{Sep} and coneat-flat \cite{BD14} right $R$-modules respectively.

The short exact sequences in $\Mod(R^{\rm op})$ which give the exact structure flatly generated by $\mathcal{M}$ are
called \emph{$s$-pure} \cite{CC}. Let $\mathcal{D}$ and $\mathcal{E}$ be the exact structures given by short exact
sequences and $s$-pure short exact sequences in ${\rm Mod}(R^{\rm op})$ respectively. Then
$\mathcal{D}$-$\mathcal{E}$-divisible and $\mathcal{D}$-$\mathcal{E}$-flat objects coincide with absolutely
$s$-pure \cite{CC} and max-flat \cite{Xiang} left $R$-modules respectively. Note that if $R$ is commutative, then coneat
short exact sequences in the sense of Fuchs are the same as $s$-pure short exact sequences \cite[Proposition~3.1]{Fuc}.

(4) Let $\mathcal{M}$ be the class of right $R$-modules $M$ with Jacobson radical $\Rad (M)=0$. 

The $\mathcal{M}$-pure short exact sequences in $\Mod(R)$ will be called \emph{rad-neat}. Let $\mathcal{D}$ and
$\mathcal{E}$ be the exact structures given by short exact sequences and rad-neat short exact sequences in ${\rm
Mod}(R)$ respectively. Then $\mathcal{D}$-$\mathcal{E}$-divisible and $\mathcal{D}$-$\mathcal{E}$-flat objects will be
called \emph{absolutely rad-neat} and \emph{rad-neat-flat} right $R$-modules respectively.

The $\mathcal{M}$-copure short exact sequences in $\Mod(R)$ will be called \emph{rad-coneat}. They were studied under
the name of coneat short exact sequences by Mermut \cite{Engin}. Let $\mathcal{D}$ and $\mathcal{E}$ be the exact
structures given by short exact sequences and rad-coneat short exact sequences in ${\rm Mod}(R)$ respectively. Then
$\mathcal{D}$-$\mathcal{E}$-divisible and $\mathcal{D}$-$\mathcal{E}$-flat objects will be called \emph{absolutely
rad-coneat} and \emph{rad-coneat-flat} right $R$-modules respectively. Note that if $R$ is semilocal,
then coneat short exact sequences are the same as rad-coneat short exact sequences \cite[Theorem 3.8.7]{Engin}.

The short exact sequences in $\Mod(R^{\rm op})$ which give the exact structure flatly generated by $\mathcal{M}$ will be
called \emph{rad-pure} in this paper. Let $\mathcal{D}$ and $\mathcal{E}$ be the exact structures given by short exact
sequences and rad-pure short exact sequences in $\Mod(R^{\rm op})$ respectively. Then
$\mathcal{D}$-$\mathcal{E}$-divisible and $\mathcal{D}$-$\mathcal{E}$-flat objects will be called \emph{absolutely
rad-pure} and \emph{rad-pure-flat} left $R$-modules respectively. Note that if $R$ is commutative semilocal, then coneat
(equivalently, rad-coneat) short exact sequences are the same as rad-pure short exact sequences.
\end{exam}

\begin{remark} \rm Our next results and applications will refer to the above examples of exact structures and to the
case when $\mathcal{D}$ is the exact structure on module categories given by all short exact sequences. We show
some properties relating relative divisibility and relative flatness with absolute purity and flatness respectively.
But with some exceptions of less known or unknown results, we do not intend to deduce corollaries for the pure exact
structure on module categories, which has been extensively studied. We rather insist on exact structures generated by
the simple modules and the modules with zero Jacobson radical in the next sections. There are also some other relevant
examples of exact structures in module categories, for which the interested reader may obtain consequences of our
properties. For instance, complement (or closed), supplement and coclosed submodules induce exact structures on module
categories (e.g., see \cite[Section~5]{Montano}). 
\end{remark}

For a left $R$-module $X$, we denote by $X^+={\rm Hom}_{\mathbb{Z}}(X,\mathbb{Q}/\mathbb{Z})$ its character module,
which is a right $R$-module. For a short exact sequence $E: 0\to X\to Y\to Z\to 0$ of left $R$-modules, we denote by
$E^+: 0\to Z^+\to Y^+\to X^+\to 0$ the induced short exact sequence of right $R$-modules. 

The following property shows that we may study flatly generated exact structures in module categories by means of
projectively generated exact structures. 

\begin{theor} \cite[Theorem~8.1]{Skly} Let $\mathcal{M}$ be a class of right $R$-modules and $E$ a short exact sequence
of left $R$-modules. Then $E\in \mathcal{E}_T^{\mathcal{M}}$ if and only if $E^+\in \mathcal{E}_H^{\mathcal{M}}$.  
\end{theor}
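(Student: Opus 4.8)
The plan is to reduce both membership conditions to the exactness of sequences of abelian groups and then to pass between them using the tensor-hom adjunction together with the fact that $\mathbb{Q}/\mathbb{Z}$ is an injective cogenerator of $\Ab$. First I would unwind the definitions from Proposition~\ref{p:flatgen}. Writing $E: 0\to X\to Y\to Z\to 0$, the condition $E\in \mathcal{E}_T^{\mathcal{M}}$ means precisely that for every $M\in\mathcal{M}$ the sequence $M\otimes_R E\colon 0\to M\otimes_R X\to M\otimes_R Y\to M\otimes_R Z\to 0$ is exact, while $E^+\in \mathcal{E}_H^{\mathcal{M}}$ means that for every $M\in\mathcal{M}$ the sequence $\Hom_R(M,E^+)\colon 0\to \Hom_R(M,Z^+)\to \Hom_R(M,Y^+)\to \Hom_R(M,X^+)\to 0$ is exact. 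So it suffices to prove, for each fixed $M\in\mathcal{M}$, that $M\otimes_R E$ is exact if and only if $\Hom_R(M,E^+)$ is exact.

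The key ingredient is the natural adjunction isomorphism
\[
\Hom_R(M,W^+)=\Hom_R\bigl(M,\Hom_{\mathbb{Z}}(W,\mathbb{Q}/\mathbb{Z})\bigr)\cong \Hom_{\mathbb{Z}}(M\otimes_R W,\mathbb{Q}/\mathbb{Z})=(M\otimes_R W)^+,
\]
between $M\otimes_R-$ and $\Hom_{\mathbb{Z}}(-,\mathbb{Q}/\mathbb{Z})$, valid for every left $R$-module $W$ and natural in $W$. Applying it to $W\in\{X,Y,Z\}$ and invoking naturality with respect to the structure maps of $E$, I would identify the complex $\Hom_R(M,E^+)$ with the character dual $(M\otimes_R E)^+$ of $M\otimes_R E$, term by term and compatibly with the differentials.

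It then remains to observe that, since $\mathbb{Q}/\mathbb{Z}$ is an injective cogenerator of $\Ab$, the functor $(-)^+$ is exact and faithful, so a short sequence of abelian groups is exact if and only if its character dual is exact. Hence $(M\otimes_R E)^+$ is exact precisely when $M\otimes_R E$ is exact. Chaining these equivalences through the identification of the previous paragraph yields, for each $M\in\mathcal{M}$, that $\Hom_R(M,E^+)$ is exact if and only if $M\otimes_R E$ is exact; quantifying over $M\in\mathcal{M}$ then gives $E^+\in\mathcal{E}_H^{\mathcal{M}}$ if and only if $E\in\mathcal{E}_T^{\mathcal{M}}$.

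I expect the only real care to be needed in the bookkeeping of module structures---that $M$ and each $W^+$ are right $R$-modules, so that $\Hom_R(M,W^+)$ is formed over $R$ and the adjunction isomorphism is $R$-linear in the correct variable---and in checking that the individual adjunction isomorphisms assemble into a genuine isomorphism of complexes, that is, that they commute with the induced maps $(1\otimes f)^+$ and $(1\otimes g)^+$. Once these naturality verifications are in place, the remaining steps are formal.
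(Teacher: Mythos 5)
Your proof is correct. There is, however, nothing in the paper to compare it against: the theorem is imported verbatim from Sklyarenko \cite[Theorem~8.1]{Skly}, and the paper supplies no argument of its own. Your route is the standard one for this classical fact: unwind both memberships to the exactness of $M\otimes_R E$ and of $\Hom_R(M,E^+)$ for each $M\in\mathcal{M}$ (this matches how the paper itself uses the definitions, e.g.\ in the proof of Proposition~\ref{p:Tor}), identify $\Hom_R(M,E^+)\cong (M\otimes_R E)^+$ as complexes via the natural tensor-hom adjunction, and conclude because $(-)^+=\Hom_{\mathbb{Z}}(-,\mathbb{Q}/\mathbb{Z})$ is exact and faithful---hence reflects exactness---since $\mathbb{Q}/\mathbb{Z}$ is an injective cogenerator of $\Ab$. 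The two points you flag as needing care (the right $R$-module structure on $W^+$ making the adjunction $R$-linear in the correct variable, and the naturality making the termwise isomorphisms an isomorphism of complexes) are indeed the only things to check, and both are standard; the argument is complete as outlined.
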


\begin{prop} \label{p:char} Let $\mathcal{M}$ be a class of right $R$-modules and $Z$ a left $R$-module. Then $Z$ is
$\mathcal{E}_T^{\mathcal{M}}$-flat if and only if $Z^+$ is an $\mathcal{E}_H^{\mathcal{M}}$-divisible right $R$-module. 
\end{prop}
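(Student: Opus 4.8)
The plan is to deduce the statement from the character-module duality between the two exact structures recorded in \cite[Theorem~8.1]{Skly}, by transporting a projective presentation of $Z$ into an injective copresentation of $Z^+$.

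First I would fix, exactly as in the proof of Proposition \ref{p:Tor}, a short exact sequence $E\colon 0\to X\to Y\to Z\to 0$ of left $R$-modules with $Y$ projective. That proof already isolates the needed equivalence on the flat side: $Z$ is $\mathcal{E}_T^{\mathcal{M}}$-flat if and only if this chosen presentation $E$ is an $\mathcal{E}_T^{\mathcal{M}}$-conflation. Applying the functor $(-)^+=\Hom_{\mathbb{Z}}(-,\mathbb{Q}/\mathbb{Z})$, which is exact since $\mathbb{Q}/\mathbb{Z}$ is injective, yields the short exact sequence $E^+\colon 0\to Z^+\to Y^+\to X^+\to 0$ of right $R$-modules, and \cite[Theorem~8.1]{Skly} gives $E\in\mathcal{E}_T^{\mathcal{M}}$ if and only if $E^+\in\mathcal{E}_H^{\mathcal{M}}$.

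It then remains to read off divisibility of $Z^+$ from $E^+$. Since $Y$ is projective, hence flat, its character module $Y^+$ is injective, so $E^+$ is an injective copresentation of $Z^+$. Dually to the flat criterion used above, an object is $\mathcal{E}_H^{\mathcal{M}}$-divisible exactly when one (equivalently, any) short exact sequence $0\to Z^+\to I\to C\to 0$ with $I$ injective is an $\mathcal{E}_H^{\mathcal{M}}$-conflation; this is the divisibility counterpart of the flatness characterization and comes from the relative cotorsion-pair machinery of \cite{CK1} (cf. \cite[Theorem~4.9]{CK1}). Applied to $E^+$, it says $Z^+$ is $\mathcal{E}_H^{\mathcal{M}}$-divisible if and only if $E^+\in\mathcal{E}_H^{\mathcal{M}}$. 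Stringing the three equivalences together proves the proposition.

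The delicate point I expect is precisely this last criterion: I must be allowed to test divisibility of $Z^+$ on the single copresentation $E^+$ produced by the construction rather than on an arbitrary one, and this ``one equals any'' independence is the genuine input from \cite{CK1} beyond the formal dualization. As an independent check, and a route that sidesteps the copresentation criterion, one can combine Proposition \ref{p:Tor}, which identifies $\mathcal{E}_T^{\mathcal{M}}$-flatness of $Z$ with the vanishing of ${\rm Tor}_1^R(M,Z)$ for all $M\in\mathcal{M}$, with the natural isomorphism ${\rm Ext}^1_R(M,Z^+)\cong({\rm Tor}_1^R(M,Z))^+$ and the fact that $\mathbb{Q}/\mathbb{Z}$ is an injective cogenerator; then ${\rm Tor}_1^R(M,Z)=0$ for all $M$ if and only if ${\rm Ext}^1_R(M,Z^+)=0$ for all $M$, which is the ${\rm Ext}$-form of $\mathcal{E}_H^{\mathcal{M}}$-divisibility.
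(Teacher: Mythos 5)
Your proof is correct, but your primary route is genuinely different from the paper's; in fact the paper's own proof is precisely your ``independent check'': it combines Proposition \ref{p:Tor} with the natural isomorphism ${\rm Ext}^1_R(M,Z^+)\cong {\rm Tor}_1^R(M,Z)^+$ and the fact that an abelian group vanishes if and only if its character group does, and then reads off divisibility from the ${\rm Ext}$-criterion of \cite[Proposition~3.3]{CK1}. Your main argument instead transports a fixed projective presentation $E$ through Sklyarenko's duality \cite[Theorem~8.1]{Skly} --- a theorem the paper quotes immediately before the proposition but never actually invokes in its written proof --- and then tests divisibility of $Z^+$ on the single injective copresentation $E^+$. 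This route works, and the ``one equals any'' independence you correctly identify as the delicate point does hold: if $E^+\in\mathcal{E}_H^{\mathcal{M}}$ then the exact sequence ${\rm Hom}_R(M,Y^+)\to {\rm Hom}_R(M,X^+)\to {\rm Ext}^1_R(M,Z^+)\to {\rm Ext}^1_R(M,Y^+)=0$ forces ${\rm Ext}^1_R(M,Z^+)=0$ for all $M\in\mathcal{M}$, which gives divisibility by \cite[Proposition~3.3]{CK1}. However, your citation is slightly off: the natural references are \cite[Proposition~3.3]{CK1} or \cite[Proposition~4.6]{CK1} (divisible if and only if $\mathcal{E}$-subobject of an injective), not \cite[Theorem~4.9]{CK1}, which concerns relative cotorsion pairs and envelopes and is not needed here. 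What your route buys is a conceptual explanation of why the Sklyarenko theorem is stated where it is: the proposition becomes the object-level shadow of the sequence-level duality $E\in\mathcal{E}_T^{\mathcal{M}}\Leftrightarrow E^+\in\mathcal{E}_H^{\mathcal{M}}$. What the paper's route buys is economy: no choice of presentation and no ``one equals any'' reduction, since both relative notions are converted at once into vanishing conditions matched by the ${\rm Ext}$--${\rm Tor}$ isomorphism.
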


\begin{proof} Use \cite[Proposition~3.3]{CK1}, Proposition \ref{p:Tor} and ${\rm Ext}^1_R(\mathcal{M},Z^+)\cong
{\rm Tor}^R_1(\mathcal{M},Z)^+$.
\end{proof}

\begin{cor} \label{c:puresub} Let $\mathcal{M}$ be a class of right $R$-modules. Then the class of
$\mathcal{E}_T^{\mathcal{M}}$-flat left $R$-modules is closed under pure submodules. 
\end{cor}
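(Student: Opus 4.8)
The plan is to transport the problem to right modules via the character functor $(-)^+$ and then to exploit Proposition \ref{p:char}, which turns $\mathcal{E}_T^{\mathcal{M}}$-flatness of a left module into $\mathcal{E}_H^{\mathcal{M}}$-divisibility of its character module. The key point is that a \emph{pure} submodule dualizes to a \emph{direct summand}, and relative divisibility is visibly closed under direct summands.

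So, let $0\to A\to B\to C\to 0$ be a pure exact sequence of left $R$-modules with $B$ an $\mathcal{E}_T^{\mathcal{M}}$-flat module; I must show that $A$ is $\mathcal{E}_T^{\mathcal{M}}$-flat. First I would recall the classical fact that a short exact sequence of left $R$-modules is pure if and only if its character dual splits. Applied here, this gives a split short exact sequence $0\to C^+\to B^+\to A^+\to 0$ of right $R$-modules, so that $A^+$ is (isomorphic to) a direct summand of $B^+$.

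Next, since $B$ is $\mathcal{E}_T^{\mathcal{M}}$-flat, Proposition \ref{p:char} shows that $B^+$ is an $\mathcal{E}_H^{\mathcal{M}}$-divisible right $R$-module. I would then observe that the class of $\mathcal{E}_H^{\mathcal{M}}$-divisible right $R$-modules is closed under direct summands: indeed, combining \cite[Proposition~3.3]{CK1} with the isomorphism ${\rm Ext}^1_R(\mathcal{M},(-)^+)\cong {\rm Tor}^R_1(\mathcal{M},-)^+$ recorded in the proof of Proposition \ref{p:char}, this class is cut out by the vanishing of ${\rm Ext}^1_R(M,-)$ for all $M\in\mathcal{M}$, and such an ${\rm Ext}$-orthogonal class is automatically closed under direct summands. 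Hence $A^+$, being a summand of $B^+$, is itself $\mathcal{E}_H^{\mathcal{M}}$-divisible. Applying Proposition \ref{p:char} once more, now to the module $A$, I conclude that $A$ is $\mathcal{E}_T^{\mathcal{M}}$-flat, as required.

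The only delicate points are the two facts invoked verbatim: the purity/character-splitting duality (entirely standard for module categories) and the closure of relative divisibles under summands (immediate from the $\mathrm{Ext}$-characterization, or more generally because objects injective relative to an exact structure are closed under retracts). I do not expect a direct ${\rm Tor}$-argument on the original sequence $0\to A\to B\to C\to 0$ to succeed, since purity only forces ${\rm Tor}_1^R(M,B)\to {\rm Tor}_1^R(M,C)$ to be surjective and leaves ${\rm Tor}_1^R(M,A)$ expressed merely as a quotient of ${\rm Tor}_2^R(M,C)$; this is precisely why passing to character modules, where purity becomes a splitting, is the efficient route.
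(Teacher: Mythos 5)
Your proof is correct and follows essentially the same route as the paper: the paper likewise dualizes the pure inclusion to a split epimorphism $Z^+\to Y^+$ of character modules, invokes Proposition \ref{p:char} to see $Z^+$ is $\mathcal{E}_H^{\mathcal{M}}$-divisible (hence so is its summand $Y^+$), and applies Proposition \ref{p:char} again to conclude. Your extra justification of closure of divisibles under summands via the ${\rm Ext}$-characterization, and your closing remark on why a direct ${\rm Tor}$ argument stalls, are accurate but not needed beyond what the paper leaves implicit.
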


\begin{proof} Let $Z$ be an $\mathcal{E}_T^{\mathcal{M}}$-flat left $R$-module and let $Y$ be a pure submodule of $Z$.
Then we have an induced split epimorphism $Z^+\to Y^+$. Since $Z^+$ is an $\mathcal{E}_H^{\mathcal{M}}$-divisible right
$R$-module by Proposition \ref{p:char}, so is $Y^+$. Then $Y$ is $\mathcal{E}_T^{\mathcal{M}}$-flat again by
Proposition \ref{p:char}. 
\end{proof}

Recall that a right $R$-module $Z$ is called \emph{$FP$-projective} if ${\rm Ext}^1_R(Z,X)=0$ for every absolutely pure
right $R$-module $X$ \cite{Mao13}. 

\begin{prop} \label{p:Mfpproj} Let $\mathcal{E}$ be an exact structure on ${\rm Mod}(R)$.
\begin{enumerate}
\item Assume that $\mathcal{E}$ is projectively generated by a class $\mathcal{M}$ of right $R$-modules. Then every
right $R$-module in $\mathcal{M}$ is $FP$-projective if and only if every absolutely pure right $R$-module is
$\mathcal{E}$-divisible. 
\item Assume that $\mathcal{E}$ is injectively generated by a class $\mathcal{M}$ of right $R$-modules. Then every
right $R$-module in $\mathcal{M}$ is cotorsion if and only if every flat right $R$-module is $\mathcal{E}$-flat. 
\end{enumerate}
\end{prop}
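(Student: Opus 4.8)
The plan is to reduce both biconditionals to the $\Ext$-descriptions of relatively divisible and relatively flat objects for generated exact structures; once these are in hand, each equivalence becomes a purely formal interchange of quantifiers. The input I would invoke is \cite[Proposition~3.3]{CK1}, in the same form already used in the proof of Proposition~\ref{p:char}: if $\mathcal{E}=\mathcal{E}_H^{\mathcal{M}}$ is projectively generated by $\mathcal{M}$, then a right $R$-module $X$ is $\mathcal{E}$-divisible if and only if ${\rm Ext}^1_R(M,X)=0$ for every $M\in\mathcal{M}$; dually, if $\mathcal{E}=\mathcal{E}_G^{\mathcal{M}}$ is injectively generated by $\mathcal{M}$, then $X$ is $\mathcal{E}$-flat if and only if ${\rm Ext}^1_R(X,M)=0$ for every $M\in\mathcal{M}$. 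Should the biconditional form not be literally available, it is quickly recovered: a short exact sequence lies in $\mathcal{E}_H^{\mathcal{M}}$ exactly when $\Hom_R(M,-)$ keeps it right exact for all $M\in\mathcal{M}$, so $X$ being $\mathcal{E}$-divisible forces, by testing against a sequence $0\to X\to Y\to M\to 0$, that ${\rm Ext}^1_R(M,X)=0$, while conversely that vanishing kills every lifting obstruction and hence makes every inflation out of $X$ an $\mathcal{E}_H^{\mathcal{M}}$-inflation; the injectively generated case is dual.

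For part~(1), I would unwind the definition of $FP$-projectivity: each $M\in\mathcal{M}$ is $FP$-projective precisely when ${\rm Ext}^1_R(M,Y)=0$ for every absolutely pure $Y$. Thus ``every $M\in\mathcal{M}$ is $FP$-projective'' asserts that ${\rm Ext}^1_R(M,Y)=0$ for all $M\in\mathcal{M}$ and all absolutely pure $Y$. By the divisibility characterization above, ``every absolutely pure $Y$ is $\mathcal{E}$-divisible'' asserts exactly the same family of vanishings, merely with the two universal quantifiers written in the opposite order. The two statements therefore coincide, which gives~(1).

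Part~(2) runs along identical lines with the variance reversed. A module $M$ is cotorsion exactly when ${\rm Ext}^1_R(F,M)=0$ for every flat $F$, so ``every $M\in\mathcal{M}$ is cotorsion'' means ${\rm Ext}^1_R(F,M)=0$ for all flat $F$ and all $M\in\mathcal{M}$. Using the characterization of $\mathcal{E}_G^{\mathcal{M}}$-flat objects, ``every flat $F$ is $\mathcal{E}$-flat'' says that ${\rm Ext}^1_R(F,M)=0$ for every $M\in\mathcal{M}$, for each flat $F$, which is again the same condition after swapping quantifiers. I do not expect a genuine obstacle here: all the content sits in the $\Ext$-characterization borrowed from \cite{CK1}, and the only point demanding attention is the bookkeeping of variance --- keeping the test module $M$ in the contravariant slot of $\Ext$ for part~(1) and in the covariant slot for part~(2) --- together with checking that the characterization is used as a full biconditional rather than a single implication.
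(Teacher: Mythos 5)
Your proposal is correct and takes essentially the same route as the paper: both arguments reduce the statement to the ${\rm Ext}^1$-characterization of relatively divisible (respectively relatively flat) objects for projectively (respectively injectively) generated exact structures from \cite[Proposition~3.3]{CK1}, and then unwind the definitions of $FP$-projective and cotorsion modules, which is exactly the interchange of quantifiers you describe. The only cosmetic difference is that the paper additionally cites the cotorsion pairs ($FP$-projective, absolutely pure) and (flat, cotorsion) from \cite{Mao13} and \cite{BEE}, but these citations add nothing beyond what the definitions already supply, so your omission of them is harmless.
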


\begin{proof} (1) Note that $(\mathcal{A},\mathcal{B})$ is a cotorsion pair in ${\rm Mod}(R)$ for $\mathcal{A}$ and
$\mathcal{B}$ the classes of $FP$-projective and absolutely pure right $R$-modules respectively
\cite[Theorem~2.14]{Mao13}.

Assume first that every module in $\mathcal{M}$ is $FP$-projective. Let $X$ be an absolutely pure
right $R$-module. Then ${\rm Ext}^1_R(F,X)=0$ for every $FP$-projective right $R$-module $F$, hence ${\rm
Ext}^1_R(M,X)=0$ for every $M\in \mathcal{M}$. Then $X$ is $\mathcal{E}$-divisible by \cite[Proposition~3.3]{CK1}.
 
Conversely, assume that every absolutely pure right $R$-module is $\mathcal{E}$-divisible. Let $M\in \mathcal{M}$. By
hypothesis and \cite[Proposition~3.3]{CK1}, ${\rm Ext}^1_R(M,X)=0$ for every absolutely pure right $R$-module $X$. Then $M$
is $FP$-projective.

(2) Note that $(\mathcal{A},\mathcal{B})$ is a cotorsion pair in ${\rm Mod}(R)$ for $\mathcal{A}$ and $\mathcal{B}$ the
classes of flat and cotorsion right $R$-modules respectively \cite[Theorem~3]{BEE}, and proceed in a similar way as for
(1).
\end{proof}

\begin{cor} \label{c:Mfp} Let $\mathcal{E}$ be an exact structure on ${\rm Mod}(R)$ projectively generated by a class
$\mathcal{M}$ of finitely generated right $R$-modules. Then every right $R$-module in $\mathcal{M}$ is finitely
presented if and only if every absolutely pure right $R$-module is $\mathcal{E}$-divisible. 
\end{cor}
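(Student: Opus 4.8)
The plan is to deduce Corollary~\ref{c:Mfp} from Proposition~\ref{p:Mfpproj}(1) by showing that, for the class $\mathcal{M}$ of finitely generated right $R$-modules under consideration, the condition ``every module in $\mathcal{M}$ is $FP$-projective'' is equivalent to ``every module in $\mathcal{M}$ is finitely presented.'' Since Proposition~\ref{p:Mfpproj}(1) already gives us that every module in $\mathcal{M}$ is $FP$-projective if and only if every absolutely pure right $R$-module is $\mathcal{E}$-divisible, it suffices to establish this purely module-theoretic equivalence under the finite-generation hypothesis.

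First I would recall the standard fact that every finitely presented module is $FP$-projective: for a finitely presented $Z$ and an absolutely pure $X$, one has ${\rm Ext}^1_R(Z,X)=0$, since absolutely pure (= $FP$-injective) modules are precisely those that are injective with respect to finitely presented modules. This gives the implication ``finitely presented $\Rightarrow$ $FP$-projective'' with no restriction on generation, hence one direction of the corollary follows immediately from Proposition~\ref{p:Mfpproj}(1).

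For the converse, this is where the finite-generation hypothesis is essential and where I expect the main work to lie. Let $M\in\mathcal{M}$ be finitely generated and $FP$-projective; I want to conclude that $M$ is finitely presented. I would use a free presentation $0\to K\to F\to M\to 0$ with $F$ finitely generated free (possible since $M$ is finitely generated), and aim to show $K$ is finitely generated. The key is to exploit $FP$-projectivity together with a suitable absolutely pure test module. One natural route is to recall the characterization that a finitely generated module is finitely presented precisely when it is $FP$-projective; more concretely, if $M$ were finitely generated but not finitely presented, $K$ would fail to be finitely generated, and I would produce an absolutely pure module $X$ witnessing a nonzero ${\rm Ext}^1_R(M,X)$, contradicting $FP$-projectivity. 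A clean way to obtain such a witness is via a direct limit argument: absolutely pure modules are closed under direct limits and the functor ${\rm Ext}^1_R(M,-)$ interacts with direct limits precisely when $M$ is finitely presented, so the failure of finite presentation can be detected by an appropriate absolutely pure direct limit.

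The hard part will be making the converse direction fully rigorous, namely extracting from ``$M$ finitely generated and $FP$-projective'' the conclusion that the kernel $K$ in a finite free presentation is finitely generated. The cleanest available tool is the theorem of Mao--Ding that a finitely generated module is $FP$-projective if and only if it is finitely presented (indeed $FP$-projective modules admitting a finitely generated structure are finitely presented), which is exactly the equivalence recorded in the reference \cite{Mao13}; invoking this result directly discharges the obstacle. I would therefore cite \cite{Mao13} for the equivalence ``finitely generated $+$ $FP$-projective $\iff$ finitely presented,'' combine it with Proposition~\ref{p:Mfpproj}(1), and conclude.
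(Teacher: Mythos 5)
Your proposal is correct and takes essentially the same route as the paper: combine Proposition~\ref{p:Mfpproj}(1) with the fact that a finitely generated module is $FP$-projective if and only if it is finitely presented. The only difference is bibliographic --- the paper attributes this last fact to Enochs \cite{Enochs76} rather than to Mao--Ding, but the mathematical content is identical, so your citation fallback discharges the converse direction just as the paper's does.
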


\begin{proof} Note that a finitely generated right $R$-module is $FP$-projective if and only if it is finitely presented
\cite[Proposition]{Enochs76}. 
\end{proof}

\begin{prop} \label{p:Mfgp} Let $\mathcal{E}$ be an exact structure on ${\rm Mod}(R)$ projectively generated by a class
$\mathcal{M}$ of finitely generated right $R$-modules. Then the following are equivalent:
\begin{enumerate}[(i)]
\item Every right $R$-module from $\mathcal{M}$ embeds into a (finitely generated) projective right $R$-module.
\item Every absolutely pure right $R$-module is $\mathcal{E}$-flat.
\item Every injective right $R$-module is $\mathcal{E}$-flat.
\item The injective envelope of every right $R$-module from $\mathcal{M}$ is $\mathcal{E}$-flat.
\item For every free left $R$-module $Z$, $Z^+$ is an $\mathcal{E}$-flat right $R$-module.  
\end{enumerate}
\end{prop}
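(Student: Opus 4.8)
The plan is to prove the cycle (i) $\Rightarrow$ (ii) $\Rightarrow$ (iii) $\Rightarrow$ (iv) $\Rightarrow$ (i), and then to fold in (v) through (iii) $\Rightarrow$ (v) $\Rightarrow$ (i). The tool underlying everything is a convenient description of $\mathcal{E}$-flatness. Since $\mathcal{E}=\mathcal{E}_H^{\mathcal{M}}$ is projectively generated by $\mathcal{M}$, a right $R$-module $A$ is $\mathcal{E}$-flat exactly when every short exact sequence $0\to X\to Y\to A\to 0$ is $\mathcal{M}$-pure; and a routine pull-back argument (lift a map $M\to A$ through a fixed projective presentation and then through an arbitrary deflation onto $A$) shows that this already holds as soon as \emph{one} projective presentation $0\to K\to P\to A\to 0$ is $\mathcal{M}$-pure. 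Equivalently, $A$ is $\mathcal{E}$-flat if and only if, for some (hence any) epimorphism $\pi\colon P\to A$ with $P$ projective, every morphism $M\to A$ with $M\in\mathcal{M}$ factors through $\pi$. I would record this characterization first, as every implication is phrased through it, and in the embedding steps I would take $P$ free.

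For (i) $\Rightarrow$ (ii), let $A$ be absolutely pure and fix $M\in\mathcal{M}$ and a morphism $f\colon M\to A$. By (i) there is a monomorphism $M\hookrightarrow Q$ into a finitely generated projective $Q$; since $Q$ is finitely presented and $M$ finitely generated, the cokernel $Q/M$ is finitely presented, so $\mathrm{Ext}^1_R(Q/M,A)=0$ as $A$ is absolutely pure, and hence $f$ extends to $Q\to A$. As $Q$ is projective, this extension lifts through any projective presentation $\pi\colon P\to A$, and restricting to $M$ lifts $f$; thus $A$ is $\mathcal{E}$-flat. The implication (ii) $\Rightarrow$ (iii) is immediate because injective modules are absolutely pure, and (iii) $\Rightarrow$ (iv) is trivial since $E(M)$ is injective. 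For (iv) $\Rightarrow$ (i), $\mathcal{E}$-flatness of $E(M)$ makes a free presentation $\pi\colon P\to E(M)$ an $\mathcal{M}$-pure sequence, so the inclusion $M\hookrightarrow E(M)$ lifts to $g\colon M\to P$; being a lift of a monomorphism, $g$ is a monomorphism, and the image of the finitely generated module $M$ lies in a finitely generated free direct summand of $P$, giving the required embedding into a finitely generated projective.

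It remains to incorporate (v). For (iii) $\Rightarrow$ (v), if $Z$ is a free left $R$-module then $Z^+$ is a product of copies of the injective right module $R^+$, hence injective, and so $\mathcal{E}$-flat by (iii). For (v) $\Rightarrow$ (i), take $M\in\mathcal{M}$, choose an epimorphism $Z\to M^+$ with $Z$ free, and apply $(-)^+$ to obtain a monomorphism $M^{++}\hookrightarrow Z^+$; composing with the canonical monomorphism $M\hookrightarrow M^{++}$ embeds $M$ into the $\mathcal{E}$-flat module $Z^+$. The lifting argument of (iv) $\Rightarrow$ (i), applied now to an $\mathcal{M}$-pure free presentation of $Z^+$, produces a monomorphism from $M$ into a free module, hence into a finitely generated projective one. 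Together with the first cycle this gives the equivalence of all five conditions.

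The main obstacle is the initial reduction: establishing that $\mathcal{E}$-flatness is detected on a single projective presentation via the factorization property for maps out of $\mathcal{M}$, since every subsequent step is a diagram chase through that property. The only other non-formal inputs are the observation that $Q/M$ is finitely presented (so that absolute purity supplies the extension needed in (i) $\Rightarrow$ (ii)) and the exactness of the character functor, which turns the free presentation of $M^+$ into the chain $M\hookrightarrow M^{++}\hookrightarrow Z^+$ used in (v) $\Rightarrow$ (i).
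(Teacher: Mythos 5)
Your proof is correct and follows essentially the same route as the paper's: the same cycle (i)$\Rightarrow$(ii)$\Rightarrow$(iii)$\Rightarrow$(iv)$\Rightarrow$(i) with identical arguments (the finitely presented cokernel $Q/M$ and absolute purity for (i)$\Rightarrow$(ii), the lifting of $M\hookrightarrow E(M)$ for (iv)$\Rightarrow$(i), injectivity of $Z^+$ for (iii)$\Rightarrow$(v)), and the same key characterization of $\mathcal{E}$-flatness by factorization of maps from $\mathcal{M}$ through projectives, which the paper simply cites as \cite[Proposition~4.6]{CK1} rather than re-deriving as you do. The only minor divergence is how (v) is folded back in: you prove (v)$\Rightarrow$(i) via $M\hookrightarrow M^{++}\hookrightarrow Z^+$ followed by the lifting argument, whereas the paper proves (v)$\Rightarrow$(iii) by observing that for injective $X$ the monomorphism $X\to X^{++}\to Y^+$ splits, making $X$ a direct summand of an $\mathcal{E}$-flat module; both variants are equally valid.
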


\begin{proof} (i)$\Rightarrow$(ii) Let $X$ be an absolutely pure right $R$-module. Let $f:M\to E$ be a homomorphism
with $M\in \mathcal{M}$. By hypothesis, $M$ embeds into some finitely generated projective right $R$-module $P$. Since
$P/M$ is finitely presented, $f$ can be extended to a homomorphism $P\to X$. Then $X$ is $\mathcal{E}$-flat by
\cite[Proposition~4.6]{CK1}.

(ii)$\Rightarrow$(iii) and (iii)$\Rightarrow$(iv) These are clear.

(iv)$\Rightarrow$(i) Let $M\in \mathcal{M}$. Since the injective envelope $E(M)$ of $M$ is $\mathcal{E}$-flat, the
inclusion homomorphism $i:M\to E(M)$ factors through a projective right $R$-module $P$ by \cite[Proposition~4.6]{CK1}.
It follows that $M$ embeds into $P$. 

(iii)$\Rightarrow$(v) Let $Z$ be a free left $R$-module. Then $Z^+$ is an injective right $R$-module, hence $Z^+$ is
$\mathcal{E}$-flat by hypothesis. 

(v)$\Rightarrow$(iii) Let $X$ be an injective right $R$-module. Consider an epimorphism $Y\to X^+$ for some free left
$R$-module $Y$. Then $Y^+$ is $\mathcal{E}$-flat by hypothesis and we have an induced monomorphism $X\to X^{++}\to
Y^+$. But this splits, since $X$ is injective, hence $X$ is $\mathcal{E}$-flat. 
\end{proof}

Recall that a ring $R$ is called a \emph{right IF ring} if every injective right $R$-module is flat \cite{Colby}.

\begin{cor} \cite[Theorem~1]{Colby} The following are equivalent:
\begin{enumerate}[(i)]
\item Every finitely presented right $R$-module embeds into a (finitely generated) projective right $R$-module.
\item Every absolutely pure right $R$-module is flat.
\item $R$ is a right IF ring.
\item The injective envelope of every finitely presented right $R$-module is flat.
\item For every free left $R$-module $Z$, $Z^+$ is a flat right $R$-module.  
\end{enumerate}
\end{cor}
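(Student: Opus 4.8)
The plan is to deduce this corollary as the special case of Proposition \ref{p:Mfgp} in which $\mathcal{M}$ is the class of finitely presented right $R$-modules. Since finitely presented modules are in particular finitely generated, this choice of $\mathcal{M}$ satisfies the standing hypothesis of Proposition \ref{p:Mfgp}. The essential input is the identification, recorded in Example \ref{e:exact}(1), of the exact structure $\mathcal{E}$ projectively generated by the finitely presented modules with the pure exact structure on ${\rm Mod}(R)$.

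First I would invoke Example \ref{e:exact}(1) once more to note that, for this $\mathcal{E}$, the class of $\mathcal{E}$-flat right $R$-modules is exactly the class of flat right $R$-modules. With this dictionary in place, the five conditions of Proposition \ref{p:Mfgp} translate term by term into the five conditions of the corollary: conditions (i), (iv) and (v) are the same statements with ``$\mathcal{E}$-flat'' read as ``flat'', while (ii) becomes ``every absolutely pure right $R$-module is flat''. Condition (iii) of Proposition \ref{p:Mfgp} becomes ``every injective right $R$-module is flat'', which is precisely the definition of a right IF ring recalled immediately before the corollary, and so matches its condition (iii). Applying Proposition \ref{p:Mfgp} then delivers the claimed equivalence.

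Because the argument is purely a specialization, there is no genuine obstacle to overcome; the only step demanding any attention is the double use of Example \ref{e:exact}(1) to identify both the projectively generated exact structure with the pure one and relative $\mathcal{E}$-flatness with ordinary flatness. Once these two identifications are made explicit, the corollary follows at once.
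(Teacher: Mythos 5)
Your proposal is correct and follows essentially the same route as the paper: the paper's proof is precisely to apply Proposition \ref{p:Mfgp} to the pure exact structure, which by Example \ref{e:exact}(1) is the structure projectively generated by the finitely presented modules and has the flat modules as its relatively flat objects. Your only addition is to make explicit the term-by-term translation (including that Proposition \ref{p:Mfgp}(iii) matches the definition of a right IF ring), which the paper leaves implicit.
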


\begin{proof} This follows by Proposition \ref{p:Mfgp}, considering the exact structure given by pure short exact
sequences of right $R$-modules. 
\end{proof}

Recall that a ring $R$ is called a \emph{right FGF ring} if every finitely generated right $R$-module embeds into a free
(projective) right $R$-module. 

\begin{cor} The following are equivalent:
\begin{enumerate}[(i)]
\item $R$ is a right FGF ring.
\item Every absolutely pure right $R$-module is finitely projective.
\item Every injective right $R$-module is finitely projective.
\item The injective envelope of every finitely generated right $R$-module is finitely projective.
\item For every free left $R$-module $Z$, $Z^+$ is a finitely projective right $R$-module.  
\end{enumerate}
\end{cor}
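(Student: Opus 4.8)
The plan is to apply Proposition \ref{p:Mfgp} verbatim, specializing the class $\mathcal{M}$ to be all finitely generated right $R$-modules and taking $\mathcal{E}=\mathcal{E}_H^{\mathcal{M}}$ to be the exact structure projectively generated by $\mathcal{M}$, namely the structure of finitely split short exact sequences from Example \ref{e:exact}(2). This mirrors the proof of the preceding corollary for right IF rings, which specialized Proposition \ref{p:Mfgp} to the pure exact structure.

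First I would invoke Example \ref{e:exact}(2) to identify the $\mathcal{E}$-flat right $R$-modules with the finitely projective right $R$-modules. Under this identification, conditions (ii), (iii), (iv) and (v) of the corollary become exactly conditions (ii), (iii), (iv) and (v) of Proposition \ref{p:Mfgp}, with no further argument needed.

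It then remains to match condition (i). Proposition \ref{p:Mfgp}(i) requires every module in $\mathcal{M}$ --- here every finitely generated right $R$-module --- to embed into a finitely generated projective right $R$-module, while the definition of a right FGF ring asks for an embedding into a free (projective) module. I would observe that these describe the same class: a finitely generated submodule of a free module involves only finitely many coordinates, so it already lies in a finitely generated free direct summand, while conversely a finitely generated projective module is a direct summand of a finitely generated free module. Hence the FGF property is equivalent to Proposition \ref{p:Mfgp}(i) for this choice of $\mathcal{M}$.

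The whole mathematical content is already carried by Proposition \ref{p:Mfgp}, so there is no substantive obstacle beyond this terminological bookkeeping; the only point requiring a moment's care is the equivalence between ``embeds into a free module'' and ``embeds into a finitely generated projective module'' for finitely generated modules, which is routine.
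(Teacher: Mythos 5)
Your proposal is correct and is exactly the paper's proof: the paper likewise deduces the corollary by applying Proposition \ref{p:Mfgp} to the finitely split exact structure of Example \ref{e:exact}(2), under which $\mathcal{E}$-flat means finitely projective. The only detail you spell out that the paper leaves implicit is the routine equivalence, for finitely generated modules, between embedding into a free module and embedding into a finitely generated projective one, and your argument for that is right.
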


\begin{proof} This follows by Proposition \ref{p:Mfgp}, considering the exact structure given by finitely split short
exact sequences of right $R$-modules. 
\end{proof}

\begin{cor} \label{c:fproj} Let $\mathcal{E}$ be an exact structure on ${\rm Mod}(R)$. 
\begin{enumerate}
\item Assume that $\mathcal{E}$ is projectively generated by a class $\mathcal{M}$ of right $R$-modules. Then the
following are equivalent:
\begin{enumerate}[(i)]
\item For every short exact sequence $0\to X\to Y'\to Z'\to 0$ of right $R$-modules with $X$ absolutely pure
(respectively cotorsion) and $Y'$ $\mathcal{E}$-divisible, $Z'$ is $\mathcal{E}$-divisible.
\item For every short exact sequence $0\to Z\to U\to V\to 0$ of right $R$-modules with $V\in \mathcal{M}$ and $U$
projective, $Z$ is $FP$-projective (respectively flat).
\end{enumerate}
\item Assume that $\mathcal{E}$ is injectively generated by a class $\mathcal{M}$ of right $R$-modules. Then the
following are equivalent:
\begin{enumerate}[(i)]
\item For every short exact sequence $0\to Z\to U\to V\to 0$ of right $R$-modules with $V$ $FP$-projective (respectively
flat) and $U$ $\mathcal{E}$-flat, $Z$ is $\mathcal{E}$-flat.
\item For every short exact sequence $0\to X\to Y'\to Z'\to 0$ of right $R$-modules with $X\in \mathcal{M}$ and
$Y'$ injective, $Z'$ is absolutely pure (respectively cotorsion).
\end{enumerate}
\end{enumerate}
\end{cor}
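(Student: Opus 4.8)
The plan is to reduce both equivalences to the vanishing of $\operatorname{Ext}$ groups and to exploit dimension shifting. By \cite[Proposition~3.3]{CK1}, if $\mathcal{E}$ is projectively generated by $\mathcal{M}$ then a module $W$ is $\mathcal{E}$-divisible precisely when ${\rm Ext}^1_R(M,W)=0$ for all $M\in\mathcal{M}$, and dually, if $\mathcal{E}$ is injectively generated by $\mathcal{M}$ then $W$ is $\mathcal{E}$-flat precisely when ${\rm Ext}^1_R(W,M)=0$ for all $M\in\mathcal{M}$. I will also use the two cotorsion pairs already invoked in the proof of Proposition \ref{p:Mfpproj}: the pair of $FP$-projective and absolutely pure modules from \cite[Theorem~2.14]{Mao13}, and the pair of flat and cotorsion modules from \cite[Theorem~3]{BEE}. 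Thus ``$Z$ is $FP$-projective (resp. flat)'' means ${\rm Ext}^1_R(Z,X)=0$ for every absolutely pure (resp. cotorsion) $X$, while ``$Z'$ is absolutely pure (resp. cotorsion)'' means ${\rm Ext}^1_R(V,Z')=0$ for every $FP$-projective (resp. flat) $V$. Finally I record two trivial but essential facts: projective modules are $\mathcal{E}$-flat and injective modules are $\mathcal{E}$-divisible for every $\mathcal{E}$, since the relevant ${\rm Ext}^1$ groups vanish.

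For Part (1), I would fix, for each $M\in\mathcal{M}$, a short exact sequence $0\to Z\to U\to M\to 0$ with $U$ projective, so that dimension shifting yields ${\rm Ext}^2_R(M,-)\cong{\rm Ext}^1_R(Z,-)$. For (ii)$\Rightarrow$(i), take any $0\to X\to Y'\to Z'\to 0$ with $X$ absolutely pure (resp. cotorsion) and $Y'$ $\mathcal{E}$-divisible; applying ${\rm Hom}_R(M,-)$ and using ${\rm Ext}^1_R(M,Y')=0$ gives an injection ${\rm Ext}^1_R(M,Z')\hookrightarrow{\rm Ext}^2_R(M,X)\cong{\rm Ext}^1_R(Z,X)$, and the right-hand group vanishes because (ii) makes $Z$ $FP$-projective (resp. flat) while $X$ is absolutely pure (resp. cotorsion); hence $Z'$ is $\mathcal{E}$-divisible. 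For (i)$\Rightarrow$(ii), given $0\to Z\to U\to V\to 0$ with $V\in\mathcal{M}$, $U$ projective, and an arbitrary absolutely pure (resp. cotorsion) $X$, I embed $X$ into its injective envelope to form $0\to X\to E\to E/X\to 0$; here $E$ is $\mathcal{E}$-divisible, so (i) forces $E/X$ to be $\mathcal{E}$-divisible, and the isomorphisms ${\rm Ext}^1_R(V,E/X)\cong{\rm Ext}^2_R(V,X)\cong{\rm Ext}^1_R(Z,X)$ (the first because $E$ is injective, the second by the syzygy) show ${\rm Ext}^1_R(Z,X)=0$. As $X$ is arbitrary, $Z$ is $FP$-projective (resp. flat).

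Part (2) is the exact dual, run with the slots reversed. For each $M\in\mathcal{M}$ I fix an injective copresentation $0\to M\to E\to E/M\to 0$, giving ${\rm Ext}^2_R(-,M)\cong{\rm Ext}^1_R(-,E/M)$. For (ii)$\Rightarrow$(i), start from $0\to Z\to U\to V\to 0$ with $U$ $\mathcal{E}$-flat and $V$ $FP$-projective (resp. flat); applying ${\rm Hom}_R(-,M)$ and ${\rm Ext}^1_R(U,M)=0$ gives ${\rm Ext}^1_R(Z,M)\hookrightarrow{\rm Ext}^2_R(V,M)\cong{\rm Ext}^1_R(V,E/M)$, and the latter vanishes since (ii) makes $E/M$ absolutely pure (resp. cotorsion) while $V$ is $FP$-projective (resp. flat), whence $Z$ is $\mathcal{E}$-flat. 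For (i)$\Rightarrow$(ii), given $0\to X\to Y'\to Z'\to 0$ with $X\in\mathcal{M}$, $Y'$ injective, and any $FP$-projective (resp. flat) $V$, take a projective presentation $0\to Z\to U\to V\to 0$; since $U$ is projective it is $\mathcal{E}$-flat, so (i) makes $Z$ $\mathcal{E}$-flat, i.e. ${\rm Ext}^1_R(Z,X)=0$, and then ${\rm Ext}^1_R(V,Z')\cong{\rm Ext}^2_R(V,X)\cong{\rm Ext}^1_R(Z,X)=0$ (first isomorphism because $Y'$ is injective), giving $Z'$ absolutely pure (resp. cotorsion).

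The homological bookkeeping is routine; the point that needs care, and what I expect to be the main obstacle, is the two ``building'' directions (i)$\Rightarrow$(ii), where one must manufacture the correct auxiliary sequence so that the connecting homomorphism becomes an isomorphism. This is exactly why I choose an injective envelope (resp. a projective presentation): injectivity of $E$ (resp. projectivity of $U$) annihilates the neighbouring ${\rm Ext}$ terms and upgrades the a priori injection to the needed isomorphism ${\rm Ext}^1\cong{\rm Ext}^2$, while simultaneously guaranteeing the hypothesis of (i) (namely $Y'=E$ being $\mathcal{E}$-divisible, resp. $U$ being $\mathcal{E}$-flat). Matching the two ``respectively'' cases correctly through the two cotorsion pairs is the only other place where vigilance is required.
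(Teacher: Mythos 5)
Your proof is correct, but it takes a genuinely different route from the paper. The paper disposes of both parts in one line by invoking the general result \cite[Proposition~4.3]{CK1} on relative cotorsion pairs in exact categories, applied to the cotorsion pairs ($FP$-projective, absolutely pure) of \cite[Theorem~2.14]{Mao13} and (flat, cotorsion) of \cite[Theorem~3]{BEE}; you instead unfold the argument entirely inside ${\rm Mod}(R)$ as an explicit ${\rm Ext}$-computation. Your two ingredients are sound: the ${\rm Ext}^1$-characterizations of $\mathcal{E}$-divisible and $\mathcal{E}$-flat modules for projectively, respectively injectively, generated structures are exactly how the paper itself uses \cite[Proposition~3.3]{CK1} elsewhere (e.g.\ in Proposition \ref{p:Mfpproj}), and the orthogonality relations you extract from the two cotorsion pairs are the correct ones in both ``respectively'' branches. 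The dimension-shifting steps all check out, including the two delicate upgrades from injections to isomorphisms, which you correctly secure by choosing an injective envelope (so that ${\rm Ext}^i_R(V,E)=0$ kills the neighbouring terms) and a projective presentation (dually). What the paper's approach buys is brevity and the fact that the statement is literally an instance of a theorem valid in any exact category with a relative cotorsion pair; what yours buys is a self-contained, elementary verification that makes the syzygy--cosyzygy mechanism visible and does not require the reader to unpack the companion paper.
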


\begin{proof} Use \cite[Proposition~4.3]{CK1} for the cotorsion pair $(\mathcal{A},\mathcal{B})$, where $\mathcal{A}$ is the
class of $FP$-projective (respectively flat) right $R$-modules and $\mathcal{B}$ is the class of absolutely pure
(respectively cotorsion) right $R$-modules \cite[Theorem~3]{BEE}, \cite[Theorem~2.14]{Mao13}. 
\end{proof}

\begin{cor} \label{c:quot} Let $\mathcal{E}$ be an exact structure on ${\rm Mod}(R)$. 
\begin{enumerate}
\item Assume that $\mathcal{E}$ is projectively generated by a class $\mathcal{M}$ of right $R$-modules. Then the
following are equivalent:
\begin{enumerate}[(i)]
\item The class of $\mathcal{E}$-divisible right $R$-modules is closed under homomorphic images.
\item For every short exact sequence $0\to Z\to U\to V\to 0$ of right $R$-modules with $V\in \mathcal{M}$ and $U$
projective, $Z$ is projective.
\end{enumerate}
\item Assume that $\mathcal{E}$ is injectively generated by a class $\mathcal{M}$ of right $R$-modules. Then the
following are equivalent:
\begin{enumerate}[(i)]
\item The class of $\mathcal{E}$-flat right $R$-modules is closed under submodules.
\item For every short exact sequence $0\to X\to Y'\to Z'\to 0$ of right $R$-modules with $X\in \mathcal{M}$ and
$Y'$ injective, $Z'$ is injective.
\end{enumerate}
\end{enumerate}
\end{cor}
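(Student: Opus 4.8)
The plan is to translate both equivalences into the vanishing of $\mathrm{Ext}$ and then into statements about projective (resp.\ injective) dimension. For part (1), recall that since $\mathcal{E}$ is projectively generated by $\mathcal{M}$, an object $X$ is $\mathcal{E}$-divisible precisely when ${\rm Ext}^1_R(M,X)=0$ for every $M\in\mathcal{M}$ (the characterization used repeatedly above via \cite[Proposition~3.3]{CK1}); write $\mathcal{M}^{\perp}$ for this class. First I would observe that condition (ii) is equivalent to the assertion that every $M\in\mathcal{M}$ has projective dimension at most $1$: given a short exact sequence $0\to Z\to U\to V\to 0$ with $U$ projective and $V=M$, the module $Z$ is a first syzygy of $M$, so $Z$ is projective for one (hence, by Schanuel's lemma, every) such presentation exactly when ${\rm pd}(M)\le 1$. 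Thus (1) reduces to: $\mathcal{M}^{\perp}$ is closed under homomorphic images if and only if ${\rm pd}(M)\le 1$ for all $M\in\mathcal{M}$.

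For (ii)$\Rightarrow$(i), take $X\in\mathcal{M}^{\perp}$ and a homomorphic image $W$, say $0\to K\to X\to W\to 0$; applying ${\rm Hom}_R(M,-)$ gives the exact piece ${\rm Ext}^1_R(M,X)\to {\rm Ext}^1_R(M,W)\to {\rm Ext}^2_R(M,K)$, whose outer terms vanish (the left since $X\in\mathcal{M}^{\perp}$, the right since ${\rm pd}(M)\le 1$), so $W\in\mathcal{M}^{\perp}$. For (i)$\Rightarrow$(ii) I would show ${\rm Ext}^2_R(M,-)=0$: given any module $N$, embed it in an injective module $E$, giving $0\to N\to E\to E/N\to 0$; since $E$ is injective it lies in $\mathcal{M}^{\perp}$, hence so does its quotient $E/N$ by (i), and dimension shifting yields ${\rm Ext}^2_R(M,N)\cong {\rm Ext}^1_R(M,E/N)=0$. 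As $N$ is arbitrary this forces ${\rm pd}(M)\le 1$.

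Part (2) I would handle by the evident dualization. Here $\mathcal{E}$ is injectively generated by $\mathcal{M}$, so an object $Z$ is $\mathcal{E}$-flat exactly when ${\rm Ext}^1_R(Z,M)=0$ for every $M\in\mathcal{M}$ (the left-orthogonal class ${}^{\perp}\mathcal{M}$, the characterization underlying Proposition \ref{p:Mfpproj}(2)), and condition (ii) is equivalent to ${\rm id}(M)\le 1$ for all $M\in\mathcal{M}$. The direction (ii)$\Rightarrow$(i) runs as before with ${\rm Hom}_R(-,M)$ in place of ${\rm Hom}_R(M,-)$: for $W\subseteq Z$ with $Z\in{}^{\perp}\mathcal{M}$ the sequence ${\rm Ext}^1_R(Z,M)\to {\rm Ext}^1_R(W,M)\to {\rm Ext}^2_R(Z/W,M)$ has vanishing outer terms, so $W\in{}^{\perp}\mathcal{M}$. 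For (i)$\Rightarrow$(ii), to see ${\rm Ext}^2_R(-,M)=0$ I would resolve an arbitrary $N$ by a free module, $0\to K\to F\to N\to 0$; then $F\in{}^{\perp}\mathcal{M}$ as it is projective, its submodule $K$ lies in ${}^{\perp}\mathcal{M}$ by (i), and dimension shifting gives ${\rm Ext}^2_R(N,M)\cong {\rm Ext}^1_R(K,M)=0$.

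The routine ingredients are the long exact sequences and Schanuel's lemma; the one step that uses the hypothesis essentially is each (i)$\Rightarrow$(ii). The crux there is the choice of test sequence: one must present an arbitrary module so that the piece on which the orthogonality hypothesis is invoked is a quotient of an injective (part (1)) or a submodule of a projective (part (2)), since only injectives automatically lie in $\mathcal{M}^{\perp}$ and only projectives automatically lie in ${}^{\perp}\mathcal{M}$. Lining up these reductions with the correct direction of closure (images versus submodules) is the main thing to be careful about. Alternatively, both equivalences can be obtained at once by applying \cite[Proposition~4.3]{CK1} to the cotorsion pair of $FP$-projective (resp.\ flat) and absolutely pure (resp.\ cotorsion) modules, exactly as in the proof of Corollary \ref{c:fproj}.
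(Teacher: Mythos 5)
Your argument is correct, but it is not the route the paper takes: the paper disposes of this corollary in one line by citing the general exact-category statement \cite[Proposition~4.4]{CK1} (the analogue of \cite[Proposition~4.3]{CK1} for the trivial cotorsion pair), whereas you give a direct, self-contained module-theoretic proof. Your translation of (ii) into ${\rm pd}(M)\le 1$ (resp.\ ${\rm id}(M)\le 1$) via Schanuel's lemma, the use of the characterization of $\mathcal{E}$-divisible and $\mathcal{E}$-flat objects as ${\rm Ext}$-orthogonals of $\mathcal{M}$, and the two dimension-shifting arguments (embedding into an injective for part (1), resolving by a free module for part (2)) are all sound, and the choice of test sequence in each (i)$\Rightarrow$(ii) is exactly the right pressure point. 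What your approach buys is transparency and independence from the companion paper's machinery; what the paper's approach buys is uniformity, since the same Proposition~4.4 applies verbatim in any exact category and immediately yields all the later specializations (neat, coneat, rad-neat, rad-coneat). One small correction to your closing remark: applying \cite[Proposition~4.3]{CK1} to the cotorsion pair of $FP$-projective (resp.\ flat) and absolutely pure (resp.\ cotorsion) modules yields Corollary~\ref{c:fproj}, not Corollary~\ref{c:quot}; to recover \ref{c:quot} from that proposition you would need the trivial cotorsion pair (projectives, all modules), respectively (all modules, injectives), which is precisely what \cite[Proposition~4.4]{CK1} packages.
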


\begin{proof} This follows by \cite[Proposition~4.4]{CK1}. 
\end{proof}

\begin{cor} 
\begin{enumerate}[(i)]
\item \cite[Theorem~3.2]{St70} The class of absolutely pure right $R$-modules is coresolving if and only if $R$ is right coherent.
\item \cite[Theorem~2]{Megibben} The class of absolutely pure right $R$-modules is closed under homomorphic images if
and only if $R$ is right semihereditary.
\end{enumerate}
\end{cor}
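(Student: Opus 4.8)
The plan is to apply the two preceding corollaries to the \emph{pure} exact structure $\mathcal{E}$ on ${\rm Mod}(R)$, which by Example \ref{e:exact}(1) is projectively generated by the class $\mathcal{M}$ of finitely presented right $R$-modules and for which the $\mathcal{E}$-divisible modules are exactly the absolutely pure ones. Part (i) will come from Corollary \ref{c:fproj}(1) and part (ii) from Corollary \ref{c:quot}(1); in both cases the remaining work is to translate the resulting condition on kernels of projective presentations into right coherence, respectively right semihereditariness, using Schanuel's lemma together with the fact (Corollary \ref{c:Mfp}, after \cite{Enochs76}) that a finitely generated module is $FP$-projective if and only if it is finitely presented.

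For part (i), I would first record that the class of absolutely pure modules always contains the injectives and is closed under extensions (via the long exact sequence of ${\rm Ext}^1_R(P,-)$ for finitely presented $P$), so that being coresolving is equivalent to being closed under cokernels of monomorphisms. Taking the ``absolutely pure'' alternative in Corollary \ref{c:fproj}(1), its condition (i) reads precisely as this closure property, and is therefore equivalent to condition (ii): for every short exact sequence $0\to Z\to U\to V\to 0$ with $V$ finitely presented and $U$ projective, the kernel $Z$ is $FP$-projective. It then remains to prove that this kernel condition is equivalent to right coherence. For the forward implication I would take a finitely generated submodule $W$ of a finitely generated free module $F_0$; since $W$ is finitely generated, $V:=F_0/W$ is finitely presented, so the kernel condition gives that $W$ is $FP$-projective, and being finitely generated it is finitely presented by Corollary \ref{c:Mfp}. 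Hence every finitely generated submodule of a free module is finitely presented, i.e.\ $R$ is right coherent. Conversely, if $R$ is right coherent then in any finitely generated projective presentation $0\to Z_0\to U_0\to V\to 0$ of a finitely presented $V$ the kernel $Z_0$ is a finitely generated submodule of a projective module, hence finitely presented and thus $FP$-projective; Schanuel's lemma then transfers $FP$-projectivity to the kernel $Z$ of an arbitrary projective presentation.

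For part (ii), I would run the same scheme through Corollary \ref{c:quot}(1): its condition (i) is exactly that the class of absolutely pure modules is closed under homomorphic images, and is equivalent to condition (ii), that for every short exact sequence $0\to Z\to U\to V\to 0$ with $V$ finitely presented and $U$ projective the kernel $Z$ is projective. To identify this with right semihereditariness I would again pass to a finitely generated submodule $W$ of a finitely generated free module $F_0$, note that $F_0/W$ is finitely presented, and deduce from condition (ii) that $W$ is projective; thus every finitely generated submodule of a free module is projective, which says that $R$ is right semihereditary. The converse is immediate from Schanuel's lemma, reducing an arbitrary projective presentation to a finitely generated one whose kernel is a finitely generated submodule of a projective and hence projective.

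The main obstacle, in both parts, is the passage between the homological condition on the kernel of a projective presentation and the intrinsic ring condition. The delicate point is that the kernel of a map from a finitely generated projective onto a finitely presented module need not a priori be finitely generated; this is circumvented by testing the condition on quotients $F_0/W$ with $W$ \emph{chosen} finitely generated, so that $F_0/W$ is automatically finitely presented and the kernel under scrutiny is the prescribed $W$. In part (i) the second subtlety is upgrading ``$FP$-projective'' to ``finitely presented'', which is exactly where Corollary \ref{c:Mfp} is essential; Schanuel's lemma handles the reduction from arbitrary to finitely generated presentations in the converse directions.
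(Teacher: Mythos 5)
Your proposal is correct and follows the paper's own route: apply Corollary \ref{c:fproj}(1) (absolutely pure alternative) and Corollary \ref{c:quot}(1) to the exact structure of pure short exact sequences, for which the $\mathcal{E}$-divisible modules are the absolutely pure ones. The only difference is that you spell out the classical translation of the kernel conditions into right coherence and right semihereditariness (via Schanuel's lemma and the Enochs fact that finitely generated $FP$-projective modules are finitely presented), a step the paper leaves to the cited references; your handling of it, including the care taken to test on quotients $F_0/W$ with $W$ finitely generated, is sound.
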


\begin{proof} This follows by Corollaries \ref{c:fproj} and \ref{c:quot}, considering the exact structure given by pure
short exact sequences of right $R$-modules. 
\end{proof}

\begin{defi} \label{defcoh} \rm Let $\mathcal{N}$ be a class of right $R$-modules. A ring $R$ is called \emph{right
$\mathcal{N}$-coherent} if every $N\in \mathcal{N}$ is $2$-presented, that is, there is an exact sequence $F_2\to
F_1\to F_0\to N\to 0$ of right $R$-modules with $F_0,F_1,F_2$ finitely generated free.
\end{defi}

\begin{prop} \label{p:coh} Let $\mathcal{N}$ be a class of finitely presented right $R$-modules. Then the following are
equivalent: 
\begin{enumerate}[(i)] 
\item $R$ is right $\mathcal{N}$-coherent.
\item $\underset{\rightarrow}\lim {\rm Ext}^1_R(N,X_i)\cong {\rm Ext}^1_R(N,\underset{\rightarrow}\lim X_i)$ for every
$N\in \mathcal{N}$ and every direct system $(X_i)_{i\in I}$ of right $R$-modules.
\item The class of $\mathcal{E}_H^{\mathcal{N}}$-divisible right $R$-modules is closed under direct limits.
\item The class of $\mathcal{E}_H^{\mathcal{N}}$-divisible right $R$-modules is closed under pure quotients.
\item ${\rm Tor}_1^R(N,\prod_{i\in I}Z_i)\cong \prod_{i\in I} {\rm Tor}_1^R(N,Z_i)$ for every $N\in \mathcal{N}$ and
every family $(Z_i)_{i\in I}$ of left $R$-modules.
\item The class of $\mathcal{E}_T^{\mathcal{N}}$-flat left $R$-modules is closed under direct products.
\item A right $R$-module $X$ is $\mathcal{E}_H^{\mathcal{N}}$-divisible if and only if $X^+$ is
an $\mathcal{E}_T^{\mathcal{N}}$-flat left $R$-module.
\item A right $R$-module $X$ is $\mathcal{E}_H^{\mathcal{N}}$-divisible if and only if $X^{++}$ is an
$\mathcal{E}_H^{\mathcal{N}}$-divisible right $R$-module. 
\item A left $R$-module $Z$ is $\mathcal{E}_T^{\mathcal{N}}$-flat if and only if $Z^{++}$ is an 
$\mathcal{E}_T^{\mathcal{N}}$-flat left $R$-module.
\end{enumerate}  
\end{prop}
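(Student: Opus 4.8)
The plan is to read off the two relevant classes homologically and then run everything through the classical homological characterisation of $2$-presented modules. By \cite[Proposition~3.3]{CK1} and Proposition~\ref{p:Tor}, a right $R$-module $X$ is $\mathcal{E}_H^{\mathcal{N}}$-divisible exactly when ${\rm Ext}^1_R(N,X)=0$ for all $N\in\mathcal{N}$, and a left $R$-module $Z$ is $\mathcal{E}_T^{\mathcal{N}}$-flat exactly when ${\rm Tor}_1^R(N,Z)=0$ for all $N\in\mathcal{N}$. The engine is the standard fact that, for a finitely presented $N$, being $2$-presented is equivalent to ${\rm Ext}^1_R(N,-)$ commuting with direct limits and to ${\rm Tor}_1^R(N,-)$ commuting with direct products; I would prove this by taking a projective resolution whose first three terms are finitely generated free and using that finitely generated free modules commute with direct limits (in ${\rm Hom}$) and with direct products (in $\otimes$) in the relevant degrees, the reverse implications being the Chase-type ones. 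Quantifying over $N\in\mathcal{N}$ this gives (i)$\Leftrightarrow$(ii)$\Leftrightarrow$(v) at once.

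Next I would translate the commutation statements into the closure statements (iii), (iv), (vi). The easy directions are immediate: substituting the vanishing ${\rm Ext}^1_R(N,X_i)=0$ into (ii) gives ${\rm Ext}^1_R(N,\varinjlim X_i)=0$, so (ii)$\Rightarrow$(iii), and likewise (v)$\Rightarrow$(vi). For the remaining directions I would exploit the character-module duality together with the facts that $(-)^+$ turns a pure-exact sequence into a split one and a coproduct into a product. Concretely, a direct limit is a pure epimorphic image of a coproduct, so $(\varinjlim X_i)^+$ is a direct summand of $\prod X_i^+$; since the $\mathcal{E}_T^{\mathcal{N}}$-flat modules are closed under summands, this lets me pass between closure of divisibles under direct limits and closure of flats under products, and similarly between pure quotients and direct limits. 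This is the bridge tying (iii), (iv) to (vi), and it also supplies the converse arrows back to the engine once the divisibility/flatness translation of the next paragraph is available.

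For the character-module items I would first establish, for $2$-presented $N$, the natural isomorphism ${\rm Tor}_1^R(N,X^+)\cong {\rm Ext}^1_R(N,X)^+$: with a resolution of $N$ whose terms in degrees $0,1,2$ are finitely generated free, the natural map $P\otimes_R X^+\to {\rm Hom}_R(P,X)^+$ is an isomorphism on those terms, and exactness of $(-)^+$ then identifies the two derived functors. Since $(-)^+$ is faithful, vanishing of one side is equivalent to vanishing of the other, which gives (i)$\Rightarrow$(vii). Combining (vii) with Proposition~\ref{p:char} — a left module $Z$ is flat iff $Z^+$ is divisible — yields, on one hand, $X$ divisible $\Leftrightarrow X^+$ flat $\Leftrightarrow X^{++}$ divisible (the last step being Proposition~\ref{p:char} for $Z=X^+$), giving (viii); and, on the other hand, $Z$ flat $\Leftrightarrow Z^+$ divisible $\Leftrightarrow Z^{++}$ flat (the last step being (vii) for $Z^+$), giving (ix). To close the cycle I would show that any of (vii)--(ix) returns the isomorphism ${\rm Tor}_1^R(N,X^+)\cong{\rm Ext}^1_R(N,X)^+$ on enough modules to force ${\rm Tor}_1^R(N,-)$ to commute with products, hence (v).

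I expect the main obstacle to be exactly the converse (``closure $\Rightarrow$ commutation'') directions, i.e. recovering the isomorphisms in (ii) and (v) from the mere closure properties (iii), (iv), (vi): these are the Chase-type implications, where one must produce a specific test system or product detecting non-$2$-presentedness while controlling the always-present natural comparison maps $\varinjlim{\rm Ext}^1_R(N,X_i)\to{\rm Ext}^1_R(N,\varinjlim X_i)$ and ${\rm Tor}_1^R(N,\prod Z_i)\to\prod{\rm Tor}_1^R(N,Z_i)$. The second delicate point is pinning down the exact range of validity of the identity ${\rm Tor}_1^R(N,X^+)\cong{\rm Ext}^1_R(N,X)^+$, since it is this identity, through faithfulness of $(-)^+$, that converts the flatness/divisibility statements (vii)--(ix) into the product and direct-limit commutations and thereby welds the character-module block to the homological engine.
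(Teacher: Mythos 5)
Your overall architecture is the right one and matches the paper's in spirit (translate divisibility and flatness into vanishing of ${\rm Ext}^1_R(N,-)$ and ${\rm Tor}_1^R(N,-)$ via \cite[Proposition~3.3]{CK1} and Proposition~\ref{p:Tor}, run the $2$-presentedness characterisation, and weld on the character-module block). But there are two genuine gaps. First, the implications that actually carry the proposition --- recovering (i) from the closure conditions --- are only \emph{named} (``Chase-type'') and then listed in your final paragraph as the main obstacle; they are never carried out. The paper executes them concretely: for (iii)$\Rightarrow$(i) one tests against a direct system $(X_i)$ of \emph{injective} modules, so that (iii) gives ${\rm Ext}^1_R(N,\varinjlim X_i)=0$, and then a five-lemma diagram built on a presentation $0\to K\to M\to N\to 0$ (with $M$ finitely generated free, $K$ finitely generated) shows the comparison map for ${\rm Hom}_R(K,-)$ is an isomorphism, hence $K$ is finitely presented and $N$ is $2$-presented; (vi)$\Rightarrow$(i) is the dual argument with a product of \emph{projective} left modules and the tensor comparison map. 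Without some such test objects your cycle never closes.

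Second, your proposed bridge between the closure conditions (iii), (iv), (vi) and the duality conditions (vii)--(ix) is circular as sketched: to pass from ``$X$ divisible'' to ``$X^+$ flat'' you need ${\rm Tor}_1^R(N,X^+)\cong{\rm Ext}^1_R(N,X)^+$, which you yourself only establish for $2$-presented $N$, i.e.\ under (i) --- so it cannot be used to get \emph{back} to (i) from (iii), (iv) or (vi), nor to derive (v) from (vii)--(ix) as you suggest at the end of your third paragraph. The paper avoids this by routing (vii)$\Rightarrow$(iv) using only the hypothesis (vii) itself (a pure epimorphism $X\to Y$ dualises to a split monomorphism $Y^+\to X^+$, so $Y^+$ is a summand of the flat module $X^+$, whence $Y$ is divisible by (vii)), then (iv)$\Rightarrow$(iii)$\Rightarrow$(i); and (viii)$\Rightarrow$(ix)$\Rightarrow$(vi)$\Rightarrow$(i), where (ix)$\Rightarrow$(vi) uses closure of $\mathcal{E}_T^{\mathcal{N}}$-flat modules under pure submodules (Corollary~\ref{c:puresub}) and the purity of $\bigoplus Z_i^+$ in $\prod Z_i^+$. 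You have all the individual ingredients (the summand trick, pure-submodule closure, $(\varinjlim X_i)$ as a pure quotient of $\bigoplus X_i$), but the logical routing must be arranged so that every arrow out of (iii)--(ix) reaches (i) without invoking the $2$-presentedness-dependent isomorphisms.
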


\begin{proof} (i)$\Rightarrow$(ii) This follows by \cite[Lemma~2.9]{CD}, because every $N\in \mathcal{N}$ is
$2$-presented.

(ii)$\Rightarrow$(iii) This is clear.

(iii)$\Rightarrow$(i) Let $N\in \mathcal{N}$. Since $N$ is finitely presented, there is an exact sequence $0\to K\to
M\to N\to 0$ of right $R$-modules with $M$ finitely generated free and $K$ finitely generated. Let $(X_i)_{i\in I}$
be a direct system of injective right $R$-modules. Then $\underset{\rightarrow} \lim X_i$ is
$\mathcal{E}_H^{\mathcal{N}}$-divisible, and so ${\rm Ext}^1_R(N,\underset{\rightarrow} \lim X_i)=0$ 
by \cite[Proposition~3.3]{CK1}. Then we have the following induced commutative diagram with exact rows:
\[\SelectTips{cm}{}
\xymatrix{
0 \ar[r] & {\rm Hom}_R(N,\underset{\rightarrow} \lim X_i) \ar[d]_f \ar[r] & {\rm Hom}_R(M,\underset{\rightarrow} \lim
X_i) \ar[d]^g \ar[r] & {\rm Hom}_R(K,\underset{\rightarrow} \lim X_i) \ar[d]^h \ar[r] & 0 \\ 0 \ar[r] &
\underset{\rightarrow} \lim {\rm Hom}_R(N,X_i) \ar[r] & \underset{\rightarrow} \lim {\rm Hom}_R(M,X_i)
\ar[r] & \underset{\rightarrow} \lim {\rm Hom}_R(K,X_i) \ar[r] & 0
}\]
Since $M$ and $N$ are finitely presented, $f$ and $g$ are isomorphisms. It follows that $h$ is an isomorphism, and so
$K$ is finitely presented. Then $N$ is $2$-presented, which shows that $R$ is right $\mathcal{N}$-coherent.

(i)$\Rightarrow$(v) This follows by \cite[Lemma~2.10]{CD}, because every $N\in \mathcal{N}$ is $2$-presented.

(v)$\Rightarrow$(vi) This is clear.

(vi)$\Rightarrow$(i) Let $N\in \mathcal{N}$. Since $N$ is finitely presented, there is an exact sequence $0\to K\to M\to
N\to 0$ of right $R$-modules with $M$ finitely generated free and $K$ finitely generated. Let $(Z_i)_{i\in I}$ be a
family of projective left $R$-modules. Then $\prod_{i\in I} Z_i$ is $\mathcal{E}_T^{\mathcal{N}}$-flat, and so 
${\rm Tor}_1^R(N,\prod_{i\in I} Z_i)=0$ by Proposition \ref{p:Tor}. Then we have the following induced commutative
diagram with exact rows: 
\[\SelectTips{cm}{}
\xymatrix{
0 \ar[r] & K\otimes_R (\prod_{i\in I} Z_i) \ar[d]_f \ar[r] & M\otimes_R (\prod_{i\in I} Z_i) \ar[d]^g \ar[r] &
N\otimes_R (\prod_{i\in I} Z_i) \ar[d]^h \ar[r] & 0 \\
0 \ar[r] & \prod_{i\in I} (K\otimes_R Z_i) \ar[r] & \prod_{i\in I} (M\otimes_R Z_i) \ar[r] & \prod_{i\in I}
(N\otimes_R Z_i) \ar[r] & 0
}\]
Since $M$ and $N$ are finitely presented, $g$ and $h$ are isomorphisms. It follows that $f$ is an isomorphism, and so
$K$ is finitely presented. Then $N$ is $2$-presented, which shows that $R$ is right $\mathcal{N}$-coherent.

(i)$\Rightarrow$(vii) For every $N\in \mathcal{N}$ we have ${\rm Tor}_1^R(N,X^+)\cong {\rm Ext}^1_R(N,X)^+$, because $N$
is $2$-presented. Then use \cite[Proposition~3.3]{CK1} and Proposition \ref{p:Tor}.

(vii)$\Rightarrow$(viii) This is clear, using Proposition \ref{p:char}.

(viii)$\Rightarrow$(ix) Let $Z$ be a left $R$-module. If $Z$ is $\mathcal{E}_T^{\mathcal{N}}$-flat, then $Z^+$ is an
$\mathcal{E}_H^{\mathcal{N}}$-divisible right $R$-module by Proposition \ref{p:char}. It follows that $Z^{+++}$ is an
$\mathcal{E}_H^{\mathcal{N}}$-divisible right $R$-module by hypothesis, and so $Z^{++}$ is an
$\mathcal{E}_T^{\mathcal{N}}$-flat left $R$-module again by Proposition \ref{p:char}. Conversely, if $Z^{++}$ is
$\mathcal{E}_T^{\mathcal{N}}$-flat, then $Z$ is also $\mathcal{E}_T^{\mathcal{N}}$-flat by Corollary \ref{c:puresub},
because the inclusion $Z\to Z^{++}$ is a pure monomorphism.

(ix)$\Rightarrow$(vi) Let $(Z_i)_{i\in I}$ be a family of $\mathcal{E}_T^{\mathcal{N}}$-flat left $R$-modules. By
\cite[Proposition~4.1]{CK1}, $\bigoplus_{i\in I}Z_i$ is $\mathcal{E}_T^{\mathcal{N}}$-flat. We have $\left (
\prod_{i\in I} Z_i^+\right )^+\cong \left (\bigoplus_{i\in I}Z_i\right )^{++}$, which is
$\mathcal{E}_T^{\mathcal{N}}$-flat by hypothesis. Since $\bigoplus_{i\in I}M_i^+$ is a pure submodule of
$\prod_{i\in I}M_i^+$, $\left(\bigoplus_{i\in I}Z_i^+\right)^+$ is isomorphic to a direct summand of
$\left(\prod_{i\in I}Z_i^+\right)^+$. Hence $\left(\bigoplus_{i\in I}Z_i^+\right)^+$ is
$\mathcal{E}_T^{\mathcal{N}}$-flat. For every $i\in I$, the inclusion $Z_i\to Z_i^{++}$ is a pure monomorphism,
hence so is the induced inclusion $\prod_{i\in I}Z_i\to \prod_{i\in I}Z_i^{++}$. But $\prod_{i\in I}Z_i^{++}\cong
\left(\bigoplus_{i\in I}Z_i^+\right)^+$, which is $\mathcal{E}_T^{\mathcal{N}}$-flat. Finally, $\prod_{i\in I}Z_i$ is
$\mathcal{E}_T^{\mathcal{N}}$-flat by Corollary \ref{c:puresub}.

(iv)$\Rightarrow$(iii) Let $(X_i)_{i\in I}$ be a direct system of $\mathcal{E}_H^{\mathcal{N}}$-divisible right
$R$-modules. Then $\bigoplus_{i\in I}X_i$ is $\mathcal{E}_H^{\mathcal{N}}$-divisible by \cite[Proposition~4.1]{CK1}.
Now the canonical pure epimorphism $\bigoplus_{i\in I}X_i\to \underset{\rightarrow} \lim X_i$ implies that
$\underset{\rightarrow} \lim X_i$ is $\mathcal{E}_H^{\mathcal{N}}$-divisible.

(vii)$\Rightarrow$(iv) Let $X$ be an $\mathcal{E}_H^{\mathcal{N}}$-divisible right $R$-module and $X\to Y$ a pure
epimorphism. Then $X^+$ is an $\mathcal{E}_T^{\mathcal{N}}$-flat left $R$-module and the induced monomorphism $Y^+\to
X^+$ splits. Hence $Y^+$ is $\mathcal{E}_T^{\mathcal{N}}$-flat by \cite[Proposition~4.1]{CK1}, and so $Y$ is
$\mathcal{E}_H^{\mathcal{N}}$-divisible.
\end{proof}

\begin{remark} \rm Note that if $\mathcal{N}$ is the class of all finitely presented right $R$-modules, then $R$ is
right $\mathcal{N}$-coherent if and only if $R$ is right coherent. In this case Proposition \ref{p:coh} gives a
well-known characterization of right coherent rings in terms of absolutely pure right $R$-modules and flat left
$R$-modules (e.g., see \cite[26.6, 35.8]{Wisb}). 
\end{remark}

\begin{prop} \label{p:coh2} Let $\mathcal{N}$ be a class of finitely presented right $R$-modules. Then the following are
equivalent: 
\begin{enumerate}[(i)] 
\item $R$ is right $\mathcal{N}$-coherent and every $\mathcal{E}_T^{\mathcal{N}}$-flat left $R$-module is flat.
\item $R$ is right coherent and a right $R$-module $X$ is $\mathcal{E}_H^{\mathcal{N}}$-divisible if and only if $X$ is
absolutely pure.
\item A right $R$-module $X$ is $\mathcal{E}_H^{\mathcal{N}}$-divisible if and only if $X^+$ is a flat left $R$-module.
\end{enumerate}  
\end{prop}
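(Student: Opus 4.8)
The plan is to place condition (iii) at the centre and prove the two equivalences (i)$\Leftrightarrow$(iii) and (ii)$\Leftrightarrow$(iii). The main tools will be the character-module duality of Proposition \ref{p:char} (a left module $Z$ is $\mathcal{E}_T^{\mathcal{N}}$-flat if and only if $Z^+$ is $\mathcal{E}_H^{\mathcal{N}}$-divisible), the list of equivalent forms of right $\mathcal{N}$-coherence in Proposition \ref{p:coh}, and a few standard module-theoretic facts \cite{Wisb}: Lambek's theorem that a module $M$ is flat if and only if $M^+$ is injective, the stability of flatness, of absolute purity and of $\mathcal{E}_H^{\mathcal{N}}$-divisibility under pure submodules (the latter two because $\mathcal{N}$ consists of finitely presented modules), and the purity of the canonical map $X\to X^{++}$. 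I will also use repeatedly that a flat module is automatically $\mathcal{E}_T^{\mathcal{N}}$-flat and that an absolutely pure right module is automatically $\mathcal{E}_H^{\mathcal{N}}$-divisible.

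For (i)$\Rightarrow$(iii): right $\mathcal{N}$-coherence gives condition (vii) of Proposition \ref{p:coh}, so $X$ is $\mathcal{E}_H^{\mathcal{N}}$-divisible precisely when $X^+$ is $\mathcal{E}_T^{\mathcal{N}}$-flat; by the second part of (i) the $\mathcal{E}_T^{\mathcal{N}}$-flat and the flat left modules coincide, and (iii) follows. For (iii)$\Rightarrow$(i) I will first recover Proposition \ref{p:coh}(vii) from (iii): if $X$ is divisible then $X^+$ is flat, hence $\mathcal{E}_T^{\mathcal{N}}$-flat; conversely if $X^+$ is $\mathcal{E}_T^{\mathcal{N}}$-flat then Proposition \ref{p:char} makes $X^{++}$ divisible and, since $X\to X^{++}$ is a pure monomorphism, $X$ itself is divisible. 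This yields right $\mathcal{N}$-coherence. To see that an $\mathcal{E}_T^{\mathcal{N}}$-flat left module $Z$ is flat, I pass to $Z^+$, which is divisible by Proposition \ref{p:char}, apply (iii) to the right module $Z^+$ to obtain that $Z^{++}$ is flat, and then descend flatness along the pure monomorphism $Z\to Z^{++}$.

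The implication (ii)$\Rightarrow$(iii) is immediate: under right coherence a right module $X$ is absolutely pure if and only if $X^+$ is flat \cite{Wisb}, so the identification of the divisible and the absolutely pure classes in (ii) yields (iii). In (iii)$\Rightarrow$(ii) the equality of these two classes is already forced by (iii) alone: an absolutely pure module is divisible because $\mathcal{N}$ is finitely presented, while a divisible $X$ has $X^+$ flat by (iii), hence $X^{++}$ injective by Lambek's theorem and therefore absolutely pure, and finally $X$ is absolutely pure as a pure submodule of $X^{++}$.

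The genuine obstacle in (iii)$\Rightarrow$(ii) is to upgrade the right \emph{$\mathcal{N}$-coherence} produced above to full right \emph{coherence}, which is not automatic because $\mathcal{N}$ may be a proper subclass of the finitely presented modules. My plan is to argue through closure under products: using (iii)$\Rightarrow$(i), Proposition \ref{p:coh}(vi) shows that the $\mathcal{E}_T^{\mathcal{N}}$-flat left modules are closed under direct products, and since (i) identifies them with the flat left modules, arbitrary products of flat left $R$-modules are flat; by Chase's theorem this is precisely right coherence \cite{Wisb}. The one recurring subtlety throughout is the sidedness of character modules — if $Z$ is a left module then $Z^+$ is a right module — which must be tracked so that every use of the duality ``absolutely pure $\Leftrightarrow$ $(-)^+$ flat'' is applied to a \emph{right} module and hence invokes the \emph{right} coherence that is available, rather than the left coherence that is not.
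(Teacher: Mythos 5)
Your proof is correct, and it runs on the same engine as the paper's: the character-module duality of Proposition \ref{p:char}, the equivalences of Proposition \ref{p:coh}, Lambek's theorem, and descent of flatness/absolute purity/divisibility along the pure monomorphism $X\to X^{++}$. The differences are organisational and one substantive choice of lemma. You make (iii) the hub and prove (i)$\Leftrightarrow$(iii) and (ii)$\Leftrightarrow$(iii), whereas the paper proves (i)$\Leftrightarrow$(ii) directly and then (ii)$\Leftrightarrow$(iii); your (i)$\Rightarrow$(iii) via Proposition \ref{p:coh}(vii) is if anything cleaner than the paper's (i)$\Rightarrow$(ii), which has to route through absolute purity. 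The genuine divergence is the step you correctly identify as the crux of (iii)$\Rightarrow$(ii), namely upgrading right $\mathcal{N}$-coherence to right coherence: the paper verifies the criterion ``a left module $Z$ is flat iff $Z^{++}$ is flat'' (i.e.\ condition (ix) of Proposition \ref{p:coh} for $\mathcal{N}$ the class of all finitely presented modules), using only (iii), Proposition \ref{p:char} and closure of flat modules under pure submodules; you instead pass through (iii)$\Rightarrow$(i), invoke Proposition \ref{p:coh}(vi) to get closure of $\mathcal{E}_T^{\mathcal{N}}$-flat left modules under products, identify these with the flat left modules, and finish with Chase's theorem. Both are standard characterisations of right coherence with the correct sidedness (products of flat \emph{left} modules, character modules of \emph{right} modules), so both work; the paper's version is marginally more self-contained since it reuses its own Proposition \ref{p:coh}(ix) rather than importing Chase, while yours has the small advantage of not needing the double-dual criterion at all.
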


\begin{proof} (i)$\Rightarrow$(ii) Let $X$ be a right $R$-module. Assume that $X$ is
$\mathcal{E}_H^{\mathcal{N}}$-divisible. Since $R$ is right $\mathcal{N}$-coherent, $X^+$ is
$\mathcal{E}_T^{\mathcal{N}}$-flat by Proposition \ref{p:coh}. Then $X^+$ is flat by hypothesis, and so $X^{++}$ is
injective. But the inclusion $X\to X^{++}$ is a pure monomorphism, whence it follows that $X$ is absolutely pure.
Conversely, assume that $X$ is absolutely pure. Since $\mathcal{N}$ is a class of finitely presented right $R$-modules,
it follows that ${\rm Ext}^1_R(\mathcal{N},X)=0$. Then $X$ is $\mathcal{E}_H^{\mathcal{N}}$-divisible by 
\cite[Proposition~3.3]{CK1}. Finally, using again Proposition \ref{p:coh}, $X$ is absolutely pure if and only if $X$ is
$\mathcal{E}_H^{\mathcal{N}}$-divisible if and only if $X^+$ is $\mathcal{E}_T^{\mathcal{N}}$-flat if and only if $X^+$
is flat. Then $R$ is right coherent. 

(ii)$\Rightarrow$(i) Since $R$ is right coherent, every finitely presented right $R$-module is $2$-presented, hence $R$
is right $\mathcal{N}$-coherent. Now let $Z$ be an $\mathcal{E}_T^{\mathcal{N}}$-flat left $R$-module. Then $Z^+$ is an
$\mathcal{E}_H^{\mathcal{N}}$-divisible left $R$-module by Proposition \ref{p:char}, and so $Z^+$ is absolutely pure by
hypothesis. But this implies that $Z$ is flat. 

(ii)$\Rightarrow$(iii) This is clear.

(iii)$\Rightarrow$(ii) In order to show that $R$ is right coherent, it is enough to prove that a left $R$-module $Z$ is
flat if and only if $Z^{++}$ is flat. To this end, let $Z$ be a left $R$-module. If $Z$ is flat, then $Z^+$ is
$\mathcal{E}_H^{\mathcal{N}}$-divisible by Proposition \ref{p:char}, and so $Z^{++}$ is flat by hypothesis. Conversely,
if $Z^{++}$ is flat, then $Z$ is flat, because the inclusion $Z\to Z^{++}$ is a pure monomorphism and the class of flat
left $R$-modules is closed under pure submodules. Hence $R$ is right coherent. The last part of (ii) is now clear.
\end{proof}

We leave the interested reader to deduce consequences of the above results in the case of the exact structure given
by all pure short exact sequences of modules. We end this section with some comments on pure Xu exact structures on
module categories.

\begin{remark} \rm Let $\mathcal{E}$ be the exact structure given by pure short exact sequences in ${\rm Mod}(R)$. Then
$\mathcal{E}$ is both projectively and injectively generated (see Example \ref{e:exact}). With the notation from 
\cite[Section~3]{CK1}, we have $\Psi(\mathcal{E})=(\mathcal{A}_1,\mathcal{B}_1)$, 
where $\mathcal{A}_1$ and $\mathcal{B}_1$ are the classes
of $FP$-projective and absolutely pure right $R$-modules respectively, and
$\Phi(\mathcal{E})=(\mathcal{A}_2,\mathcal{B}_2)$, where $\mathcal{A}_2$ and $\mathcal{B}_2$ are the classes of flat and
cotorsion right $R$-modules respectively. Then $\mathcal{E}$ is a projectively generated Xu exact structure if and only
if every $FP$-projective right $R$-module is pure-projective. Note that the hypothesis from \cite[Theorem~3.10]{CK1}
that every right $R$-module has a pure-projective cover is equivalent to $R$ being a right pure-semisimple ring (e.g.,
see \cite[Theorem~6.18]{AH}). In this case, every right $R$-module is pure-projective, and so $\mathcal{E}$ is clearly a
projectively generated Xu exact structure. On the other hand, over an arbitrary ring $R$, every right $R$-module has a
pure-injective envelope. Then by \cite[Theorem~3.10]{CK1}, $\mathcal{E}$ is an injectively generated Xu
exact structure if and only if every cotorsion right $R$-module is pure-injective if and only if the class of
pure-injective modules is closed under extensions, that is, $R$ is a right Xu ring \cite{HR}. 
\end{remark}

\section{Exact structures generated by the simple modules}

In this section we deduce and complete a series of known results concerning relatively divisible and relatively flat
modules with respect to the exact structures projectively, injectively and flatly generated by the class of simple
modules. Some of them have been previously given only for modules over commutative rings.

\begin{cor} \label{c:closureDES} 
\begin{enumerate} \item 
\begin{enumerate}[(i)] 
\item The classes of absolutely neat (respectively absolutely coneat, absolutely $s$-pure) modules are closed
under extensions and neat (respectively coneat, $s$-pure) submodules. 
\item The class of absolutely neat modules is closed under direct products.
\item The classes of absolutely neat, absolutely coneat and absolutely $s$-pure modules are closed under direct
sums.
\item Every module is absolutely neat (respectively absolutely coneat, absolutely $s$-pure) if and only if
every short exact sequence of modules is neat (respectively coneat, $s$-pure). 
\item Every module is absolutely neat if and only if $R$ is semisimple. 
\end{enumerate}
\item 
\begin{enumerate}[(i)]
\item The classes of neat-flat (respectively coneat-flat, max-flat) modules are closed under extensions and
neat (respectively coneat, $s$-pure) quotients. 
\item The classes of neat-flat, coneat-flat and max-flat modules are closed under direct sums.
\item The class of max-flat modules is closed under direct limits. If every simple module is finitely presented, then
the class of neat-flat modules is closed under direct limits.
\item Every module is neat-flat (respectively coneat-flat, max-flat) if and only if every short exact sequence
of modules is neat (respectively coneat, $s$-pure).
\item Every right $R$-module is coneat-flat if and only if $R$ is a right $V$-ring.
\end{enumerate}
\end{enumerate}
\end{cor}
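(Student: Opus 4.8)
The plan is to recognize Corollary~\ref{c:closureDES} as a systematic harvest of the general machinery already developed, applied to the three exact structures generated by the class $\mathcal{M}$ of (semi)simple modules described in Example~\ref{e:exact}(3): the projectively generated $\mathcal{E}_H^{\mathcal{M}}$ (neat), the injectively generated $\mathcal{E}_G^{\mathcal{M}}$ (coneat), and the flatly generated $\mathcal{E}_T^{\mathcal{M}}$ ($s$-pure). Throughout, $\mathcal{D}$ is the exact structure of all short exact sequences, and the absolutely neat/coneat/$s$-pure modules are the $\mathcal{D}$-$\mathcal{E}$-divisible objects while the neat-flat/coneat-flat/max-flat modules are the $\mathcal{D}$-$\mathcal{E}$-flat objects. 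Most items then follow by citing the appropriate closure result from \cite{CK1}: divisibility and flatness are preserved under extensions by the cotorsion-pair formalism, under the relevant relative submodules/quotients, and under direct sums by \cite[Proposition~4.1]{CK1}. I would organize the write-up as a block of short paragraphs, one per item, each naming which earlier statement it invokes.

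For part (1): item (i) on closure under extensions and under neat/coneat/$s$-pure submodules is a direct application of the closure properties of relatively divisible objects from \cite{CK1}. Item (ii), closure of absolutely neat modules under products, follows because $\mathcal{E}_H^{\mathcal{M}}$-divisibility is characterized by $\mathrm{Ext}^1_R(M,-)=0$ vanishing on simples (via \cite[Proposition~3.3]{CK1}), and this Ext-condition is product-stable. Item (iii), closure under direct sums, is \cite[Proposition~4.1]{CK1}. Item (iv) is the tautology that $\mathcal{D}$-$\mathcal{E}$-divisibility of every object is equivalent to $\mathcal{E}=\mathcal{D}$. Item (v) is the substantive ring-theoretic statement: every module is absolutely neat $\iff$ every short exact sequence is neat $\iff$ every simple module is injective (by the identification in Example~\ref{e:exact}(3) with $m$-injectivity) $\iff$ $R$ is semisimple; here I would invoke the classical fact that $R$ is semisimple exactly when every simple module is injective.

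For part (2), the dual harvest: items (i)--(ii) are the flatness analogues, using closure of $\mathcal{D}$-$\mathcal{E}$-flat objects under extensions, under neat/coneat/$s$-pure quotients, and under direct sums from \cite{CK1}. Item (iii) on direct limits is where I expect the real work; for max-flat I would route through the character-module duality, using Proposition~\ref{p:char} to transfer the direct-limit question to a product/divisibility question on character modules, combined with the observation that simple modules are finitely presented over the relevant side so that the $\mathcal{N}$-coherence-style limit arguments of Proposition~\ref{p:coh} apply---hence the explicit hypothesis "if every simple module is finitely presented" attached to the neat-flat case. Item (iv) is again the tautology $\mathcal{E}=\mathcal{D}$. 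Item (v), that every module is coneat-flat $\iff$ $R$ is a right $V$-ring, I would prove by unwinding coneat-flatness via \cite[Proposition~4.6]{CK1} into a lifting property against simples and identifying the resulting condition with every simple module being injective, which is the definition of a right $V$-ring.

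The main obstacle will be item (2)(iii), the direct-limit closure, since direct limits are not formally governed by the cotorsion-pair closure lemmas and must be reduced to coherence-type finiteness via character modules and Proposition~\ref{p:coh}; the finitely-presented hypothesis on simples is exactly the gap that forces the conditional phrasing there. Every other item reduces to a single citation of a closure property from \cite{CK1} or to a standard ring-theoretic equivalence, so the write-up will be a disciplined bookkeeping exercise once item (2)(iii) is handled with care.
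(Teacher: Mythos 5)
Your overall strategy is the paper's: read the three notions as the $\mathcal{D}$-$\mathcal{E}$-divisible and $\mathcal{D}$-$\mathcal{E}$-flat objects for the exact structures projectively, injectively and flatly generated by the simple modules, and harvest \cite[Proposition~4.1]{CK1}, checking along the way that the relevant classes of conflations are closed under products, direct sums or direct limits (for the neat case this uses that simples are finitely generated, resp.\ finitely presented). However, your item (1)(v) contains a genuine error. The correct chain is: every module is absolutely neat $\iff$ $\mathrm{Ext}^1_R(S,X)=0$ for every simple $S$ and \emph{every} $X$ $\iff$ every simple right $R$-module is \emph{projective} $\iff$ $R$ is semisimple. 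You instead pass to ``every simple module is injective'' and then invoke ``the classical fact that $R$ is semisimple exactly when every simple module is injective''---but that condition defines a right $V$-ring, which need not be semisimple (this is precisely the condition that appears in item (2)(v), the dual statement about coneat-flatness). You have dualized in the wrong direction: the neat structure is \emph{projectively} generated by the simples, so universal divisibility forces projectivity, not injectivity, of the generators.

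Your plan for (2)(iii) is also mis-routed, though salvageable. No character-module duality and no appeal to Proposition~\ref{p:coh} is needed or appropriate: that proposition governs closure of the \emph{divisible} modules under direct limits and of the flat ones under \emph{products}, and requires the generators to be $2$-presented, a strictly stronger hypothesis than the one in the statement. The intended argument is elementary: since $-\otimes_R-$ commutes with direct limits, the class of $s$-pure conflations is closed under direct limits, and \cite[Proposition~4.1]{CK1} then gives closure of max-flat modules under direct limits unconditionally; for the neat case, if every simple $S$ is finitely presented then $\Hom_R(S,-)$ commutes with direct limits, so the class of neat conflations is closed under direct limits and the same proposition applies. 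With (1)(v) corrected and (2)(iii) replaced by this direct argument, the rest of your bookkeeping (including the $\mathrm{Ext}$-against-products argument for (1)(ii), which is a harmless variant of the paper's $\Hom$-level verification) goes through.
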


\begin{proof} This mainly follows by \cite[Proposition~4.1]{CK1}, considering the exact structures given by neat
(respectively coneat, $s$-pure) short exact sequences of modules, which are projectively (respectively
injectively, flatly) generated by the class of simple modules. We only add some details.

(1) (ii) Using the isomorphism $\Hom_R(S,\prod_{i\in I}M_i)\cong \prod_{i\in I}\Hom_R(S,M_i)$ for every (simple) module
$S$ and every family $(M_i)_{i\in I}$ of modules, it is straightforward to show that the class of neat short
exact sequences is closed under direct products. 

(iii) Note that every simple module $S$ is finitely generated. Hence we have an isomorphism $\Hom_R(S,\bigoplus_{i\in
I}M_i)\cong \bigoplus_{i\in I}\Hom_R(S,M_i)$ for every family $(M_i)_{i\in I}$ of modules, from which it is
straightforward (or use \cite[18.2]{Wisb}) to show that the class of neat short exact sequences is closed under direct
sums. One can easily show that the classes of coneat and $s$-pure short exact sequences are closed under direct sums by
\cite[16.2]{Wisb} and \cite[12.15]{Wisb} respectively. 

(v) By \cite[Proposition~4.1]{CK1}, every module is absolutely neat if and only if every simple module is
projective if and only if $R$ is semisimple.

(2) (ii) The classes of neat (respectively coneat, $s$-pure) short exact sequences are closed under direct sums by the
proof of (1) (iii).

(iii) Since the tensor functor commutes with direct limits, it is clear that the class of $s$-pure short exact sequences
is closed under direct limits. If every simple module $S$ is finitely presented, then we have an isomorphism
$\Hom_R(S,\underset{\rightarrow}\lim \ M_i)\cong \underset{\rightarrow}\lim \ \Hom_R(S,M_i)$ for every family
$(M_i)_{i\in I}$ of modules, from which it is straightforward to show that the class of neat short exact sequences is
closed under direct limits.

(v) By \cite[Proposition~4.1]{CK1}, every right $R$-module is coneat-flat if and only if every simple right
$R$-module is injective if and only if $R$ is a right $V$-ring.
\end{proof}

\begin{cor} \cite[Proposition~3.4]{BD14}, \cite[Theorem~3.3]{Sep}, \cite[Lemma~3.1, Theorem~3.2]{BD16}
\begin{enumerate} \item The following are equivalent for a right $R$-module $X$:
\begin{enumerate}[(i)]
\item $X$ is absolutely neat.
\item $X$ is a neat submodule of an injective right $R$-module.
\item $X$ is a neat submodule of an absolutely neat right $R$-module.
\item ${\rm Ext}^1_R(S,X)=0$ for every simple right $R$-module $S$.
\end{enumerate}
\item The following are equivalent for a right $R$-module $Z$:
\begin{enumerate}[(i)]
\item $Z$ is neat-flat.
\item $Z$ is a neat quotient module of a projective right $R$-module.
\item $Z$ is a neat quotient module of a neat-flat right $R$-module.
\item For every simple right $R$-module $S$, every morphism $S\to Z$ factors through a projective right $R$-module.
\end{enumerate}
\end{enumerate} 
\end{cor}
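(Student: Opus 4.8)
The plan is to read both parts as specializations of the general theory of \cite{CK1} to the exact structure $\mathcal{E}$ of neat short exact sequences, which is projectively generated by the class $\mathcal{M}$ of simple right $R$-modules, together with the structure $\mathcal{D}$ of all short exact sequences; by Example \ref{e:exact}(3) the absolutely neat and neat-flat modules are exactly the $\mathcal{D}$-$\mathcal{E}$-divisible and $\mathcal{D}$-$\mathcal{E}$-flat objects. I would treat the two parts as formal duals of one another and invoke the relevant closure and approximation results rather than arguing from scratch.

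For part (1), I would first settle (i)$\Leftrightarrow$(iv). By \cite[Proposition~3.3]{CK1}, a module $X$ is $\mathcal{E}$-divisible if and only if ${\rm Ext}^1_R(M,X)=0$ for every $M\in\mathcal{M}$, and specializing $\mathcal{M}$ to the simple modules gives precisely (iv). The underlying point is that a sequence $0\to X\to Y\to Z\to 0$ is neat exactly when $\Hom_R(S,Y)\to\Hom_R(S,Z)$ is surjective for every simple $S$, so requiring this for \emph{all} monomorphisms out of $X$ forces every extension $0\to X\to Y\to S\to 0$ to split, i.e. ${\rm Ext}^1_R(S,X)=0$. The chain (i)$\Rightarrow$(ii)$\Rightarrow$(iii)$\Rightarrow$(i) is then short: embedding $X$ in an injective $E$ yields a monomorphism which, $X$ being absolutely neat, is neat, so $X$ is a neat submodule of $E$; every injective module is absolutely neat since all monomorphisms out of it split; and (iii)$\Rightarrow$(i) is the closure of absolutely neat modules under neat submodules from Corollary \ref{c:closureDES}(1)(i).

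For part (2) I would run the dual argument. Here (i)$\Leftrightarrow$(iv) is governed by the factorization criterion \cite[Proposition~4.6]{CK1}: for a projectively generated $\mathcal{E}$, a module $Z$ is $\mathcal{E}$-flat if and only if every morphism from a generator $M\in\mathcal{M}$ into $Z$ factors through a projective, which for $\mathcal{M}$ the simple modules is exactly (iv). For (i)$\Rightarrow$(ii) I would pick a projective presentation $P\twoheadrightarrow Z$; since $Z$ is neat-flat this epimorphism is neat, exhibiting $Z$ as a neat quotient of a projective. Every projective module is neat-flat because epimorphisms onto it split, which gives (ii)$\Rightarrow$(iii) immediately, and (iii)$\Rightarrow$(i) is the closure of neat-flat modules under neat quotients from Corollary \ref{c:closureDES}(2)(i).

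The genuinely load-bearing inputs are the two characterizations \cite[Proposition~3.3]{CK1} and \cite[Proposition~4.6]{CK1} together with the closure properties of Corollary \ref{c:closureDES}; once these are in hand the implications are formal. I expect the step demanding the most care to be the flatness factorization in part (2): one must check both that neat-flatness forces each map from a simple module to lift along the chosen projective cover, and conversely that such factorizations propagate to make an \emph{arbitrary} epimorphism onto $Z$ neat, using projectivity to lift the factoring projective along the given epimorphism. A secondary point of vigilance is variance, namely invoking closure under neat \emph{submodules} on the divisible side but under neat \emph{quotients} on the flat side.
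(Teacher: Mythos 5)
Your proposal is correct and follows essentially the same route as the paper, which simply specializes \cite[Proposition~3.3]{CK1} and \cite[Proposition~4.6]{CK1} to the exact structure of neat short exact sequences (projectively generated by the simple modules). The only cosmetic difference is that you reassemble the cycle (i)$\Rightarrow$(ii)$\Rightarrow$(iii)$\Rightarrow$(i) by hand via Corollary \ref{c:closureDES}, whereas the paper obtains the full list of equivalences directly from the general approximation-theoretic statement of \cite[Proposition~4.6]{CK1}.
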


\begin{proof} This follows by \cite[Proposition~4.6]{CK1} and \cite[Proposition~3.3]{CK1}, 
considering the exact structure given by neat short exact sequences of right $R$-modules.
\end{proof}

\begin{cor} \cite[Theorem~3.1]{BD14}
\begin{enumerate} \item The following are equivalent for a right $R$-module $X$:
\begin{enumerate}[(i)]
\item $X$ is absolutely coneat.
\item $X$ is a coneat submodule of an injective right $R$-module.
\item $X$ is a coneat submodule of an absolutely coneat right $R$-module.
\item For every simple right $R$-module $S$, every morphism $X\to S$ factors through an injective right $R$-module.
\end{enumerate}
\item The following are equivalent for a right $R$-module $Z$:
\begin{enumerate}[(i)]
\item $Z$ is coneat-flat.
\item $Z$ is a coneat quotient module of a projective right $R$-module.
\item $Z$ is a coneat quotient module of a coneat-flat right $R$-module.
\item ${\rm Ext}^1_R(Z,S)=0$ for every simple right $R$-module $S$.
\end{enumerate}
\end{enumerate} 
\end{cor}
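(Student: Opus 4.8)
The plan is to recognize both parts as instances of the general machinery for relatively divisible and relatively flat objects, exactly as in the preceding corollary for the neat case, but with the projective--injective roles interchanged. First I would record the set-up: by Example \ref{e:exact}(3) the coneat short exact sequences form the exact structure $\mathcal{E}$ injectively generated by the class $\mathcal{M}$ of simple right $R$-modules, and with $\mathcal{D}$ the structure of all short exact sequences the absolutely coneat modules are precisely the $\mathcal{D}$-$\mathcal{E}$-divisible objects while the coneat-flat modules are precisely the $\mathcal{D}$-$\mathcal{E}$-flat objects. Since $\mathcal{D}$ is the absolute structure, the $\mathcal{D}$-injective objects are the injective modules and the $\mathcal{D}$-projective objects are the projective modules. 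I would then specialize \cite[Proposition~4.6]{CK1} and \cite[Proposition~3.3]{CK1} to this $\mathcal{E}$.

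For part (1) I would treat the divisible side. Because $\mathcal{E}$ is \emph{injectively} generated by $\mathcal{M}$, the factoring criterion of \cite[Proposition~4.6]{CK1} reads here as: $X$ is $\mathcal{D}$-$\mathcal{E}$-divisible if and only if every morphism $X\to S$ with $S$ simple factors through a $\mathcal{D}$-injective, i.e.\ through an injective module; this is exactly (i)$\Leftrightarrow$(iv). For the submodule statements I would run the cycle (i)$\Rightarrow$(ii)$\Rightarrow$(iii)$\Rightarrow$(iv): embedding $X$ into its injective envelope $E$ makes $0\to X\to E\to E/X\to 0$ coneat once $X$ is absolutely coneat, giving (ii); every injective module is trivially absolutely coneat, giving (ii)$\Rightarrow$(iii); and if $X$ is a coneat submodule of an absolutely coneat $W$, then each $X\to S$ extends along the coneat inclusion to $W\to S$, which by (i)$\Leftrightarrow$(iv) for $W$ factors through an injective, so $X\to S$ does too, giving (iii)$\Rightarrow$(iv).

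For part (2) I would argue dually on the flat side. Now $\mathcal{E}$ being injectively generated makes the ${\rm Ext}$ criterion of \cite[Proposition~3.3]{CK1} applicable to flatness: $Z$ is $\mathcal{D}$-$\mathcal{E}$-flat if and only if ${\rm Ext}^1_R(Z,S)=0$ for every simple $S$, which is (i)$\Leftrightarrow$(iv). The remaining equivalences follow the mirror-image cycle: a projective presentation $0\to K\to P\to Z\to 0$ is coneat when $Z$ is coneat-flat, giving (ii); projective modules are coneat-flat, giving (ii)$\Rightarrow$(iii); and if $Z$ is a coneat quotient of a coneat-flat $W$ via $0\to K\to W\to Z\to 0$, then the long exact sequence $\Hom(W,S)\to \Hom(K,S)\to {\rm Ext}^1_R(Z,S)\to {\rm Ext}^1_R(W,S)=0$ together with surjectivity of $\Hom(W,S)\to\Hom(K,S)$ (coneatness) forces ${\rm Ext}^1_R(Z,S)=0$, giving (iii)$\Rightarrow$(iv).

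The computations are all routine, so the only point demanding care --- and the thing I would check first --- is the projective--injective bookkeeping that distinguishes this corollary from the neat one: for the divisible part (1) it is maps into simple modules factoring through injectives that matter, whereas for the flat part (2) it is the vanishing of ${\rm Ext}^1_R(-,S)$; applying \cite[Proposition~3.3]{CK1} to divisibility or \cite[Proposition~4.6]{CK1} to flatness here (as opposed to in the neat case) would give the wrong side of the duality. I would also verify that the reduction of (iii) to the base case uses only that the inclusion, respectively projection, is coneat and that $W$ is absolutely coneat, respectively coneat-flat, so that no hypothesis beyond those in the statement is needed.
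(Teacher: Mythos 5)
Your proposal is correct and follows essentially the same route as the paper, which proves this corollary simply by applying \cite[Proposition~4.6]{CK1} and \cite[Proposition~3.3]{CK1} to the exact structure given by coneat short exact sequences (injectively generated by the simple modules); you correctly identify the key bookkeeping point that, relative to the neat case, the factoring criterion now serves the divisible side and the ${\rm Ext}$ criterion the flat side. The extra details you supply (the implication cycles and the long exact sequence argument) are just an unpacking of what those cited propositions deliver.
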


\begin{proof} This follows by \cite[Proposition~4.6]{CK1} and \cite[Proposition~3.3]{CK1}, 
considering the exact structure given by coneat short exact sequences of right $R$-modules.
\end{proof}

\begin{cor} \cite[Lemmas 3.3, 3.4]{BD15}, \cite[Theorem~2.2]{CC}, \cite[Theorem~3.3]{Sep}
\begin{enumerate} \item The following are equivalent for a left $R$-module $X$:
\begin{enumerate}[(i)]
\item $X$ is absolutely $s$-pure.
\item $X$ is an $s$-pure submodule of an injective left $R$-module.
\item $X$ is an $s$-pure submodule of an absolutely $s$-pure left $R$-module.
\end{enumerate}
\item The following are equivalent for a left $R$-module $Z$:
\begin{enumerate}[(i)]
\item $Z$ is max-flat.
\item $Z$ is an $s$-pure quotient module of a projective left $R$-module.
\item $Z$ is an $s$-pure quotient module of a max-flat left $R$-module.
\item ${\rm Tor}_1^R(S,Z)=0$ for every simple right $R$-module $S$.
\item $Z^+$ is an absolutely neat right $R$-module.
\end{enumerate}
\end{enumerate} 
\end{cor}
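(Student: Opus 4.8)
The plan is to reduce everything to the general characterizations of relatively divisible and relatively flat objects established in \cite[Proposition~4.6]{CK1}, applied to the $s$-pure exact structure $\mathcal{E}=\mathcal{E}_T^{\mathcal{M}}$ on left $R$-modules, where $\mathcal{M}$ is the class of simple right $R$-modules and $\mathcal{D}$ is the exact structure of all short exact sequences. By Example \ref{e:exact}(3), the absolutely $s$-pure left $R$-modules are exactly the $\mathcal{D}$-$\mathcal{E}$-divisible objects and the max-flat left $R$-modules are exactly the $\mathcal{D}$-$\mathcal{E}$-flat objects. Since $\mathcal{D}$-$\mathcal{E}$-divisibility and $\mathcal{D}$-$\mathcal{E}$-flatness depend only on the pair $(\mathcal{D},\mathcal{E})$ and not on the way $\mathcal{E}$ is generated, the argument used for the neat and coneat cases transfers directly.

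For part (1), I would invoke the divisible half of \cite[Proposition~4.6]{CK1}: a $\mathcal{D}$-$\mathcal{E}$-divisible object is precisely one admitting an $\mathcal{E}$-inflation (i.e.\ an $s$-pure monomorphism) into a $\mathcal{D}$-injective object, equivalently into a $\mathcal{D}$-$\mathcal{E}$-divisible object. As $\mathcal{D}$ consists of all short exact sequences, the $\mathcal{D}$-injective left $R$-modules are exactly the injective ones, and these two equivalences are precisely the equivalence of (i), (ii) and (iii).

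For part (2), the equivalence of (i), (ii) and (iii) follows in the same way from the flat half of \cite[Proposition~4.6]{CK1}: a max-flat module is exactly an $s$-pure quotient of a $\mathcal{D}$-projective (that is, projective) module, equivalently of a max-flat module. It then remains to adjoin conditions (iv) and (v). Condition (iv) is immediate from Proposition \ref{p:Tor} applied with $\mathcal{M}$ the class of simple modules, which says that $Z$ is $\mathcal{E}_T^{\mathcal{M}}$-flat if and only if ${\rm Tor}_1^R(S,Z)=0$ for every simple right $R$-module $S$. Condition (v) is immediate from Proposition \ref{p:char}, which gives that $Z$ is $\mathcal{E}_T^{\mathcal{M}}$-flat if and only if $Z^+$ is $\mathcal{E}_H^{\mathcal{M}}$-divisible, combined with the identification of $\mathcal{E}_H^{\mathcal{M}}$-divisible right $R$-modules with absolutely neat modules from Example \ref{e:exact}(3).

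The content is almost entirely bookkeeping: matching the four named notions (absolutely $s$-pure, max-flat, absolutely neat, and the exact structures $\mathcal{E}_T^{\mathcal{M}}$ and $\mathcal{E}_H^{\mathcal{M}}$) with the abstract classes of $\mathcal{D}$-$\mathcal{E}$-divisible and $\mathcal{D}$-$\mathcal{E}$-flat objects, while keeping the left/right and $T$/$H$ sides consistent. The one place demanding a little care is that $\mathcal{E}_T^{\mathcal{M}}$ is flatly generated rather than projectively or injectively generated, so \cite[Proposition~3.3]{CK1} (the Ext-description of divisibility used for the neat and coneat corollaries) is not the appropriate tool for (iv) and (v); the Tor-description of Proposition \ref{p:Tor} and the character-module duality of Proposition \ref{p:char} take its place. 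Beyond this, I anticipate no genuine obstacle.
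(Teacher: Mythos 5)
Your proposal is correct and follows exactly the paper's route: the equivalences (i)--(iii) in both parts come from \cite[Proposition~4.6]{CK1} applied to the $s$-pure exact structure, while (iv) and (v) are supplied by Proposition \ref{p:Tor} and Proposition \ref{p:char} respectively. Your remark that the Ext-based \cite[Proposition~3.3]{CK1} must be replaced by the Tor and character-module tools in the flatly generated setting is precisely the point the paper's citation pattern reflects.
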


\begin{proof} This follows by \cite[Proposition~4.6]{CK1} and Propositions \ref{p:Tor} and \ref{p:char}, 
considering the exact structure given by $s$-pure short exact sequences of left $R$-modules.
\end{proof}

\begin{cor} \cite[Proposition~4.9]{BD14} Every simple right $R$-module is finitely presented if and only if every
absolutely pure right $R$-module is absolutely neat. 
\end{cor}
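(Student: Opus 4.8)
The plan is to recognize this statement as a direct specialization of Corollary \ref{c:Mfp} and to apply that corollary with $\mathcal{M}$ taken to be the class of simple right $R$-modules. First I would check that the hypotheses of Corollary \ref{c:Mfp} are met: every simple module is cyclic, hence finitely generated, so $\mathcal{M}$ is indeed a class of finitely generated right $R$-modules, and Corollary \ref{c:Mfp} is applicable to it.

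Next I would set up the dictionary between the generated exact structure and the named classes. By Example \ref{e:exact}(3), the exact structure $\mathcal{E}=\mathcal{E}_H^{\mathcal{M}}$ projectively generated by the class $\mathcal{M}$ of simple right $R$-modules is precisely the neat exact structure, and (with $\mathcal{D}$ the exact structure of all short exact sequences) its $\mathcal{E}$-divisible objects are exactly the absolutely neat right $R$-modules. With these identifications, Corollary \ref{c:Mfp} reads verbatim: every right $R$-module in $\mathcal{M}$ (that is, every simple right $R$-module) is finitely presented if and only if every absolutely pure right $R$-module is $\mathcal{E}$-divisible (that is, absolutely neat), which is exactly the assertion to be proved.

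Since the whole argument reduces to quoting Corollary \ref{c:Mfp}, there is no genuine obstacle here; the only point requiring a moment's care is the terminology match in the previous paragraph. For the reader's orientation I would also note the mechanism underlying the two implications, which is Proposition \ref{p:Mfpproj}(1) together with the fact \cite[Proposition]{Enochs76} that a finitely generated module is $FP$-projective if and only if it is finitely presented: if every absolutely pure module is absolutely neat, then ${\rm Ext}^1_R(S,X)=0$ for every simple $S$ and every absolutely pure $X$, so each simple $S$ is $FP$-projective and hence, being finitely generated, finitely presented; conversely, if every simple module is finitely presented, then $FP$-projectivity of finitely presented modules forces ${\rm Ext}^1_R(S,X)=0$ for every absolutely pure $X$, so every absolutely pure module is absolutely neat. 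I would present the proof simply as an invocation of Corollary \ref{c:Mfp} for the exact structure given by neat short exact sequences of right $R$-modules.
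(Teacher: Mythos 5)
Your proposal is correct and matches the paper's proof exactly: both simply invoke Corollary \ref{c:Mfp} for the exact structure given by neat short exact sequences, i.e.\ the structure projectively generated by the class of simple right $R$-modules, which are finitely generated since they are cyclic. The extra paragraph tracing the mechanism back to Proposition \ref{p:Mfpproj}(1) and \cite[Proposition]{Enochs76} is accurate but not needed beyond what the paper records.
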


\begin{proof} This follows by Corollary \ref{c:Mfp}, considering the exact structure given by neat short exact
sequences of right $R$-modules.
\end{proof}

\begin{cor} \cite[Theorem~4.9]{BD15} The following are equivalent:
\begin{enumerate}[(i)]
\item $R$ is a right Kasch ring.
\item Every absolutely pure right $R$-module is neat-flat.
\item Every injective right $R$-module is neat-flat.
\item The injective envelope of every simple right $R$-module is neat-flat.
\item For every free left $R$-module $Z$, $Z^+$ is a neat-flat right $R$-module.  
\end{enumerate}
\end{cor}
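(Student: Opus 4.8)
The plan is to recognize this corollary as a direct specialization of Proposition \ref{p:Mfgp}. First I would take $\mathcal{M}$ to be the class of simple right $R$-modules; every simple module is cyclic, hence finitely generated, so the standing hypothesis of Proposition \ref{p:Mfgp} is satisfied. By Example \ref{e:exact}(3) the neat short exact sequences constitute the exact structure $\mathcal{E}$ projectively generated by $\mathcal{M}$, and the $\mathcal{E}$-flat objects for this structure are precisely the neat-flat right $R$-modules. Under this dictionary, conditions (ii)--(v) of the corollary are verbatim translations of conditions (ii)--(v) of Proposition \ref{p:Mfgp}, noting in particular that the injective envelope of a simple module is exactly the injective envelope of an object of $\mathcal{M}$, so (iv) matches (iv).

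It then remains to identify condition (i) of Proposition \ref{p:Mfgp} --- that every simple right $R$-module embeds into a (finitely generated) projective right $R$-module --- with the assertion that $R$ is right Kasch, and this is the one step that is not purely formal. Recall that $R$ is right Kasch precisely when every simple right $R$-module embeds into $R_R$. One implication is immediate, since $R_R$ is itself projective. For the converse I would argue as follows: suppose a simple module $S$ embeds into a projective module $P$, and write $P$ as a direct summand of a free module $F=R^{(\Lambda)}$; since $S$ is cyclic, its image inside $F$ lands in a finite subsum $R^n$. Because any nonzero homomorphism out of a simple module is a monomorphism, composing the embedding $S\hookrightarrow R^n$ with a suitable coordinate projection $R^n\to R$ produces an embedding $S\hookrightarrow R_R$. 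Hence the two embedding conditions coincide, and the right Kasch property is exactly condition (i) of Proposition \ref{p:Mfgp} for $\mathcal{M}$ the class of simple modules.

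The main obstacle, modest as it is, lies entirely in this last verification: one must ensure that an embedding into an arbitrary projective (or free) module can be refined to an embedding into $R_R$ itself, which is what reconciles Proposition \ref{p:Mfgp}(i) with the classical one-sided Kasch condition. The use of cyclicity of $S$ (to descend to a finite free summand) together with the simplicity of $S$ (to turn a nonzero coordinate map into an embedding) is exactly what makes this refinement go through. Once this equivalence is established, the corollary follows immediately by invoking Proposition \ref{p:Mfgp} for the neat exact structure.
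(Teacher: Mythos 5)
Your proposal is correct and follows exactly the paper's route: the paper's proof is the single line ``This follows by Proposition \ref{p:Mfgp}, considering the exact structure given by neat short exact sequences of right $R$-modules.'' Your additional verification that condition (i) of Proposition \ref{p:Mfgp} (embedding into a projective) coincides with the classical right Kasch condition (embedding into $R_R$) --- via cyclicity of $S$ to descend to a finite free summand and simplicity of $S$ to make a nonzero coordinate map injective --- is sound and merely fills in a step the paper leaves implicit.
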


\begin{proof} This follows by Proposition \ref{p:Mfgp}, considering the exact structure given by neat short exact
sequences of right $R$-modules.
\end{proof}

\begin{cor} 
\begin{enumerate}
\item The following are equivalent:
\begin{enumerate}[(i)]
\item For every short exact sequence $0\to X\to Y'\to Z'\to 0$ of right $R$-modules with $X$ absolutely pure
(respectively cotorsion) and $Y'$ absolutely neat, $Z'$ is absolutely neat.
\item For every short exact sequence $0\to Z\to U\to V\to 0$ of right $R$-modules with $V$ simple and $U$ projective,
$Z$ is $FP$-projective (respectively flat).
\end{enumerate}
\item The following are equivalent:
\begin{enumerate}[(i)]
\item The class of absolutely neat right $R$-modules is closed under homomorphic images.
\item For every short exact sequence $0\to Z\to U\to V\to 0$ of right $R$-modules with $V$ simple and $U$ projective,
$Z$ is projective.
\end{enumerate}
\end{enumerate}
\end{cor}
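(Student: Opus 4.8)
The plan is to obtain this corollary as a direct specialization of the general Corollaries \ref{c:fproj} and \ref{c:quot} to a suitable exact structure, so that essentially all of the work has already been done. The key observation is that, by Example \ref{e:exact}(3), the neat short exact sequences in $\mathrm{Mod}(R)$ form exactly the exact structure $\mathcal{E}$ that is projectively generated by the class $\mathcal{M}$ of simple right $R$-modules, and that for this $\mathcal{E}$ the $\mathcal{E}$-divisible modules are precisely the absolutely neat right $R$-modules. Under this dictionary, every occurrence of ``$\mathcal{E}$-divisible'' translates into ``absolutely neat'' and every occurrence of ``$V\in\mathcal{M}$'' translates into ``$V$ simple''.

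First I would fix $\mathcal{E}$ to be the exact structure of neat short exact sequences and record that it is projectively generated by $\mathcal{M}=$ the class of simple modules, together with the identification of $\mathcal{E}$-divisible objects with absolutely neat modules from Example \ref{e:exact}(3). With this in place, part (1) is nothing but Corollary \ref{c:fproj}(1) read through the dictionary: condition (i) there, where a short exact sequence with $X$ absolutely pure (respectively cotorsion) and $Y'$ $\mathcal{E}$-divisible forces $Z'$ to be $\mathcal{E}$-divisible, becomes precisely condition (i) of part (1) here, while condition (ii) there, concerning a short exact sequence $0\to Z\to U\to V\to 0$ with $V\in\mathcal{M}$ and $U$ projective, becomes condition (ii) here with $V$ simple and $Z$ being $FP$-projective (respectively flat).

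Similarly, part (2) is just Corollary \ref{c:quot}(1) under the same specialization: the closure of the class of $\mathcal{E}$-divisible modules under homomorphic images becomes the closure of the absolutely neat modules under homomorphic images, and the condition that $Z$ be projective for every short exact sequence $0\to Z\to U\to V\to 0$ with $V\in\mathcal{M}$ and $U$ projective becomes the corresponding condition with $V$ simple. Since both Corollaries \ref{c:fproj} and \ref{c:quot} already supply the equivalences in full generality, no further argument is needed. There is accordingly no genuine obstacle here beyond checking that the class of simple modules is exactly the generating class attached to the neat exact structure, which is recorded in Example \ref{e:exact}(3); I would simply state the specialization and cite these three results.
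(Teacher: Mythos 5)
Your proposal matches the paper's proof exactly: the corollary is obtained by specializing Corollaries \ref{c:fproj}(1) and \ref{c:quot}(1) to the exact structure of neat short exact sequences, which is projectively generated by the class of simple modules, with $\mathcal{E}$-divisible objects identified with absolutely neat modules. No gaps.
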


\begin{proof} This follows by Corollaries \ref{c:fproj} and \ref{c:quot}, considering the exact structure given by neat
short exact sequences of right $R$-modules.
\end{proof}

\begin{cor} 
\begin{enumerate}
\item The following are equivalent:
\begin{enumerate}[(i)]
\item For every short exact sequence $0\to Z\to U\to V\to 0$ of right $R$-modules with $V$ $FP$-projective (respectively
flat) and $U$ coneat-flat, $Z$ is coneat-flat.
\item For every short exact sequence $0\to X\to Y'\to Z'\to 0$ of right $R$-modules with $X$ simple and $Y'$ injective,
$Z'$ is absolutely pure (respectively cotorsion).
\end{enumerate}
\item The following are equivalent:
\begin{enumerate}[(i)]
\item The class of coneat-flat right $R$-modules is closed under submodules.
\item For every short exact sequence $0\to X\to Y'\to Z'\to 0$ of right $R$-modules with $X$ simple and $Y'$ injective,
$Z'$ is injective.
\end{enumerate}
\end{enumerate}
\end{cor}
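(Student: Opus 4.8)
The plan is to obtain both equivalences as immediate specializations of the general Corollaries \ref{c:fproj} and \ref{c:quot} to the case in which the class $\mathcal{M}$ is taken to be the class of simple right $R$-modules. The one preliminary point to record, which is already contained in Example \ref{e:exact}(3), is that the coneat short exact sequences are precisely the $\mathcal{M}$-copure short exact sequences for this choice of $\mathcal{M}$; hence they constitute the exact structure $\mathcal{E}=\mathcal{E}_G^{\mathcal{M}}$ injectively generated by the simple modules, and the $\mathcal{D}$-$\mathcal{E}$-flat objects for this $\mathcal{E}$ are exactly the coneat-flat modules. This places us squarely within the hypotheses of the ``injectively generated'' branches of both general corollaries.

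For part (1) I would invoke Corollary \ref{c:fproj}(2) with $\mathcal{M}$ the class of simple modules. Condition (i) there --- that every short exact sequence $0\to Z\to U\to V\to 0$ with $V$ $FP$-projective (respectively flat) and $U$ an $\mathcal{E}$-flat module forces $Z$ to be $\mathcal{E}$-flat --- becomes condition (i) of part (1) verbatim once ``$\mathcal{E}$-flat'' is read as ``coneat-flat''. Likewise, condition (ii) there, with the membership $X\in\mathcal{M}$ rewritten as ``$X$ simple'', is exactly condition (ii) of part (1). The asserted equivalence is therefore just the corresponding equivalence of Corollary \ref{c:fproj}(2) transcribed under this dictionary.

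For part (2) I would argue identically using Corollary \ref{c:quot}(2) with the same $\mathcal{M}$. The closure of the class of $\mathcal{E}$-flat modules under submodules translates into closure of the coneat-flat modules under submodules, giving condition (i), while the injectivity condition on the cokernels $Z'$ produced by simple submodules of injective modules is condition (ii).

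I expect no genuine obstacle in this argument: the entire content lies in the identification, made in Example \ref{e:exact}(3), of the coneat exact structure as injectively generated by the simple modules and of its flat objects as the coneat-flat modules. Once this identification is in hand, both equivalences follow by direct substitution into the already-established general corollaries, exactly as in the analogous statement proved above for neat short exact sequences. The only mild care required is to keep the two parallel ``(respectively)'' readings aligned between the general and specialized statements, but since Corollaries \ref{c:fproj} and \ref{c:quot} are stated with precisely the same ``$FP$-projective/flat'' and ``absolutely pure/cotorsion'' alternatives, this matching is automatic.
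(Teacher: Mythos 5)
Your proposal is correct and coincides with the paper's own argument: the paper likewise proves both parts by specializing Corollaries \ref{c:fproj}(2) and \ref{c:quot}(2) to the exact structure given by coneat short exact sequences, i.e.\ the structure injectively generated by the class of simple right $R$-modules as recorded in Example \ref{e:exact}(3). No further comment is needed.
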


\begin{proof} This follows by Corollaries \ref{c:fproj} and \ref{c:quot}, considering the exact structure given by
coneat short exact sequences of right $R$-modules.
\end{proof}

Following Definition \ref{defcoh}, a ring $R$ will be called \emph{$\mathcal{S}$-coherent} if every simple right
$R$-module is $2$-presented.

\begin{cor} \label{c:scoh} Assume that every simple right $R$-module is finitely presented. Then the following are
equivalent: 
\begin{enumerate}[(i)] 
\item $R$ is right $\mathcal{S}$-coherent.
\item $\underset{\rightarrow}\lim {\rm Ext}^1_R(S,X_i)\cong {\rm Ext}^1_R(S,\underset{\rightarrow}\lim X_i)$ for every
simple right $R$-module $S$ and every direct system $(X_i)_{i\in I}$ of right $R$-modules.
\item The class of absolutely neat right $R$-modules is closed under direct limits.
\item The class of absolutely neat right $R$-modules is closed under pure quotients.
\item ${\rm Tor}_1^R(S,\prod_{i\in I}Z_i)\cong \prod_{i\in I} {\rm Tor}_1^R(S,Z_i)$ for every simple right $R$-module
$S$ and every family $(Z_i)_{i\in I}$ of left $R$-modules.
\item The class of max-flat left $R$-modules is closed under direct products.
\item A right $R$-module $X$ is absolutely neat if and only if $X^+$ is a max-flat left $R$-module.
\item A right $R$-module $X$ is absolutely neat if and only if $X^{++}$ is an absolutely neat right $R$-module. 
\item A left $R$-module $Z$ is max-flat if and only if $Z^{++}$ is a max-flat left $R$-module.
\end{enumerate}  
\end{cor}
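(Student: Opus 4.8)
The plan is to obtain this corollary as the direct specialization of Proposition~\ref{p:coh} to the class $\mathcal{S}$ of simple right $R$-modules. The only thing to check before applying that proposition is its standing hypothesis, namely that $\mathcal{N}$ is a class of \emph{finitely presented} right $R$-modules; here this is exactly guaranteed by the assumption of the corollary that every simple right $R$-module is finitely presented. Thus Proposition~\ref{p:coh} applies verbatim with $\mathcal{N}=\mathcal{S}$.

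Next I would translate each of the nine conditions of Proposition~\ref{p:coh} into the language of the present section. By the definition preceding the statement, ``$R$ is right $\mathcal{S}$-coherent'' is precisely ``$R$ is right $\mathcal{N}$-coherent'' for $\mathcal{N}=\mathcal{S}$, so condition~(i) matches. Conditions~(ii) and~(v), which are stated pointwise over $N\in\mathcal{N}$, become the corresponding statements ranging over all simple modules $S$, matching~(ii) and~(v) of the corollary with no change. For the remaining conditions I would invoke Example~\ref{e:exact}(3): the exact structure $\mathcal{E}_H^{\mathcal{S}}$ projectively generated by the simple modules is the neat exact structure, so its divisible objects are exactly the absolutely neat right $R$-modules; and the exact structure $\mathcal{E}_T^{\mathcal{S}}$ flatly generated by the simple modules is the $s$-pure exact structure, so its flat objects are exactly the max-flat left $R$-modules.

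Substituting these two dictionary entries into conditions~(iii)--(ix) of Proposition~\ref{p:coh} then produces conditions~(iii)--(ix) of the corollary word for word: ``$\mathcal{E}_H^{\mathcal{S}}$-divisible'' is replaced everywhere by ``absolutely neat'' and ``$\mathcal{E}_T^{\mathcal{S}}$-flat'' by ``max-flat''. Since Proposition~\ref{p:coh} asserts the equivalence of (i)--(ix) under exactly the hypothesis in force here, the equivalence of the nine translated conditions follows immediately.

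There is essentially no genuine obstacle in this argument; it is a relabelling. The one point that must not be skipped is the verification that the simple modules really do form a class of finitely presented modules, for otherwise Proposition~\ref{p:coh} would not be available; this is precisely why the finiteness hypothesis is stated explicitly at the head of the corollary. Note that the identifications ``absolutely neat $=\mathcal{E}_H^{\mathcal{S}}$-divisible'' and ``max-flat $=\mathcal{E}_T^{\mathcal{S}}$-flat'' are built into the definitions once $\mathcal{S}$ is taken as the generating class, so no further purity-related lemma is needed.
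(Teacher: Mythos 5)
Your proposal is correct and follows exactly the paper's own argument: the corollary is obtained by specializing Proposition~\ref{p:coh} to the class of simple right $R$-modules (finitely presented by hypothesis) and translating $\mathcal{E}_H^{\mathcal{S}}$-divisible and $\mathcal{E}_T^{\mathcal{S}}$-flat into ``absolutely neat'' and ``max-flat'' via Example~\ref{e:exact}(3). The paper's proof is a one-line citation of the same specialization, so there is nothing to add.
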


\begin{proof} This follows by Proposition \ref{p:coh}, considering the exact structures given by neat short exact
sequences and $s$-pure short exact sequences of right $R$-modules.
\end{proof}

\begin{cor} Assume that every simple right $R$-module is finitely presented. Then the following are equivalent:  
\begin{enumerate}[(i)] 
\item $R$ is right $\mathcal{S}$-coherent and every max-flat left $R$-module is flat.
\item $R$ is right coherent and a right $R$-module $X$ is absolutely neat if and only if $X$ is absolutely pure.
\item A right $R$-module $X$ is absolutely neat if and only if $X^+$ is a flat left $R$-module.
\end{enumerate}  
\end{cor}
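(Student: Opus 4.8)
The plan is to obtain this corollary as the special case of Proposition \ref{p:coh2} in which $\mathcal{N}$ is taken to be the class $\mathcal{S}$ of all simple right $R$-modules. The first thing I would check is that Proposition \ref{p:coh2} is actually applicable: it requires $\mathcal{N}$ to be a class of \emph{finitely presented} right $R$-modules. A simple module is always cyclic, hence finitely generated, but need not be finitely presented over an arbitrary ring, so this is exactly where the standing hypothesis ``every simple right $R$-module is finitely presented'' enters — it guarantees that $\mathcal{N}=\mathcal{S}$ lies in the scope of Proposition \ref{p:coh2}. I regard verifying this matching of hypotheses as the one point that must be stated explicitly rather than assumed silently.

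The remaining work is purely a translation of terminology, using the dictionary recorded in Example \ref{e:exact}(3) together with the convention (fixed in the remark following that example) that the symbol $\mathcal{E}$-divisible always means $\mathcal{D}$-$\mathcal{E}$-divisible with $\mathcal{D}$ the exact structure of all short exact sequences. There the exact structure $\mathcal{E}_H^{\mathcal{S}}$ projectively generated by the simple modules is the neat exact structure, so its divisible objects are precisely the absolutely neat right $R$-modules; and the exact structure $\mathcal{E}_T^{\mathcal{S}}$ flatly generated by the simple modules is the $s$-pure exact structure, whose flat objects are precisely the max-flat left $R$-modules. Finally, by the definition stated just before Corollary \ref{c:scoh}, being right $\mathcal{N}$-coherent for $\mathcal{N}=\mathcal{S}$ is by definition being right $\mathcal{S}$-coherent.

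Substituting these identifications into Proposition \ref{p:coh2} turns its condition (i) into ``$R$ is right $\mathcal{S}$-coherent and every max-flat left $R$-module is flat'', its condition (ii) into ``$R$ is right coherent and a right $R$-module $X$ is absolutely neat if and only if $X$ is absolutely pure'', and its condition (iii) into ``$X$ is absolutely neat if and only if $X^+$ is a flat left $R$-module''. These coincide verbatim with conditions (i), (ii) and (iii) of the corollary, so the three equivalences follow immediately from Proposition \ref{p:coh2}. Thus there is no genuine obstacle beyond the two bookkeeping steps above, and the proof reduces to invoking Proposition \ref{p:coh2} for the class of simple right $R$-modules.
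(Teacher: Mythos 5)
Your proposal is correct and follows exactly the paper's own route: the corollary is obtained by specializing Proposition \ref{p:coh2} to the class $\mathcal{N}=\mathcal{S}$ of simple right $R$-modules (the neat and $s$-pure exact structures), with the standing hypothesis guaranteeing that $\mathcal{S}$ consists of finitely presented modules as required. Your explicit check of this hypothesis and the terminology dictionary from Example \ref{e:exact}(3) are precisely the bookkeeping the paper leaves implicit.
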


\begin{proof} This follows by Proposition \ref{p:coh2}, considering the exact structures from the proof of Corollary
\ref{c:scoh}.
\end{proof}

\section{Exact structures generated by the modules with zero Jacobson radical}

In this section we deduce a series of results concerning relatively divisible and relatively flat modules with respect
to the exact structures projectively, injectively and flatly generated by the class of modules with zero Jacobson
radical. We start with a series of examples.

\begin{exam} \rm (1) For $k\in \mathbb{N}$, denote $\mathbb{Z}_k=\mathbb{Z}/k\mathbb{Z}$. Let $p$ be a prime,
$A=\bigoplus_{n=1}^\infty \mathbb{Z}_{p^n}$ and $B=\prod_{n=1}^\infty \mathbb{Z}_{p^n}$. By \cite [Example,
p.~75]{Xu}, the short exact sequence $0\to A\to B\to B/A\to 0$ of $\mathbb{Z}$-modules is rad-pure.

(2) Since $\mathbb Z_2\otimes \mathbb Z\cong \mathbb Z_2$ and $\mathbb Z_2\otimes \mathbb Q=0$, the sequence $0\to
\mathbb Z_2\otimes \mathbb Z\to \mathbb Z_2\otimes \mathbb Q\to \mathbb Z_2 \otimes (\mathbb Q/\mathbb Z)\to 0$
of $\mathbb{Z}$-modules is not exact. So the short exact sequence $0\to \mathbb Z\to \mathbb Q\to \mathbb Q/\mathbb Z\to
0$ of $\mathbb{Z}$-modules is not rad-pure.

(3) Let $k$ be a field and $R=k[x, y]$ the polynomial ring in two indeterminates $x$ and $y$. Let $M=<x, y>$. Note that
$\Rad(M)=0$. Since $ M\otimes M\cong M^2, M\otimes R\cong M, M\otimes (R/M)=0$ and $M\ncong M^2$, the sequence $0\to
M\otimes M \to M\otimes R\to M\otimes (R/M) \to 0$ of $\mathbb{Z}$-modules is not exact. Therefore the short exact
sequence $0\to M \to R\to R/M \to 0$ of $R$-modules is not rad-pure.
\end{exam}

Clearly, every rad-pure short exact sequence is an $s$-pure short exact sequence. But the converse is not true in
general, as we may see in the following example.

\begin{exam} \label{counter} \rm Let $R$ be an integral domain having a simple module $S$ with projective dimension
$p.d.(S)>1$. Then there exists a non-splitting short exact sequence $0\to D\to M\to S\to 0$ such that $D$ is an
$h$-divisible torsion $R$-module \cite[Example~3.3]{Fuc}. As indicated in \cite[Example 3.1]{Sep}, $D$ is absolutely
coneat (hence absolutely $s$-pure, because $R$ is commutative). Since $D/\Rad (D)$ is again an
$h$-divisible torsion $R$-module, we have $T\otimes (D/\Rad (D))=0$ for every simple $R$-module $T$. Then the sequence
$$0\to (D/\Rad (D))\otimes D\to (D/\Rad (D))\otimes M\to (D/\Rad(D))\otimes S\to 0$$ is not exact. So $D$ is not an
absolutely rad-pure $R$-module.
\end{exam}

\begin{cor} \label{c:closureDER} 
\begin{enumerate} \item 
\begin{enumerate}[(i)] 
\item The classes of absolutely rad-neat (respectively absolutely rad-coneat, absolutely rad-pure) modules are closed
under extensions and rad-neat (respectively rad-coneat, rad-pure) submodules. 
\item The class of absolutely rad-neat modules is closed under direct products.
\item The classes of absolutely rad-coneat and absolutely rad-pure modules are closed under direct sums. If every
module with zero Jacobson radical is finitely generated, then the class of absolutely rad-neat modules is closed under
direct sums.
\item Every module is absolutely rad-neat (respectively absolutely rad-coneat, absolutely rad-pure) if and only if
every short exact sequence of modules is rad-neat (respectively rad-coneat, rad-pure). 
\item Every module is absolutely rad-neat if and only if every module with zero Jacobson radical is projective. 
\end{enumerate}
\item 
\begin{enumerate}[(i)]
\item The classes of rad-neat-flat (respectively rad-coneat-flat, rad-pure-flat) modules are closed under extensions
and rad-neat (respectively rad-coneat, rad-pure) quotients. 
\item The classes of rad-coneat-flat and rad-pure-flat modules are closed under direct sums. If every module with zero
Jacobson radical is finitely generated, then the class of rad-neat flat modules is closed under direct sums.
\item The class of rad-pure-flat modules is closed under direct limits. If every module with zero Jacobson radical is
finitely presented, then the class of rad-neat-flat modules is closed under direct limits.
\item Every module is rad-neat-flat (respectively rad-coneat-flat, rad-pure-flat) if and only if every short exact
sequence of modules is rad-neat (respectively rad-coneat, rad-pure).
\item Every module is rad-coneat-flat if and only if every module with zero Jacobson radical is injective.
\end{enumerate}
\end{enumerate}
\end{cor}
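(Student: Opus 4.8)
The plan is to recognize Corollary \ref{c:closureDER} as the exact analogue of Corollary \ref{c:closureDES}, now applied to the class $\mathcal{M}$ of right $R$-modules with zero Jacobson radical instead of the class of simple modules. Accordingly, I would proceed by invoking the same machinery: the exact structures given by rad-neat, rad-coneat and rad-pure short exact sequences are respectively projectively, injectively and flatly generated by $\mathcal{M}$ (by definition, see Example \ref{e:exact}(4)). The absolutely rad-neat, absolutely rad-coneat, absolutely rad-pure modules are the $\mathcal{D}$-$\mathcal{E}$-divisible objects, while the rad-neat-flat, rad-coneat-flat, rad-pure-flat modules are the $\mathcal{D}$-$\mathcal{E}$-flat objects. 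Thus the bulk of the statement follows by the general closure properties \cite[Proposition~4.1]{CK1}, exactly as in Corollary \ref{c:closureDES}.

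First I would dispatch the items that are formally identical to the simple-module case. Parts (1)(i), (1)(iv), (2)(i), (2)(iv) are immediate from \cite[Proposition~4.1]{CK1}, since relative divisibility is always closed under extensions and $\mathcal{E}$-submonomorphisms, and relative flatness under extensions and $\mathcal{E}$-epimorphisms; the ``every module is divisible/flat iff every sequence is an $\mathcal{E}$-conflation'' statements are also general facts about generated exact structures. For (1)(ii), the class of rad-neat short exact sequences is closed under direct products because $\Hom_R(M,-)$ commutes with products for every $M\in\mathcal{M}$, so $\mathcal{D}$-$\mathcal{E}$-divisibility with respect to a projectively generated structure is preserved; this is the same argument as in the proof of Corollary \ref{c:closureDES}(1)(ii). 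For (1)(v), I would use \cite[Proposition~4.1]{CK1} to reduce ``every module absolutely rad-neat'' to ``every module in $\mathcal{M}$ is projective,'' i.e. every module with zero Jacobson radical is projective, and similarly (2)(v) reduces rad-coneat-flatness of all modules to injectivity of all modules with zero radical.

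The steps that genuinely differ from the simple case, and where I expect the real work to lie, are the direct-sum and direct-limit claims in (1)(iii), (2)(ii), (2)(iii). Unlike a simple module, a module $M$ with $\Rad(M)=0$ need not be finitely generated, so the isomorphism $\Hom_R(M,\bigoplus_i N_i)\cong\bigoplus_i\Hom_R(M,N_i)$ can fail; this is precisely why the rad-neat direct-sum and direct-limit statements carry the hypotheses ``every module with zero Jacobson radical is finitely generated'' (resp. finitely presented), whereas the rad-coneat and rad-pure cases need no such hypothesis. For rad-pure I would argue that the tensor functor $M\otimes_R-$ commutes with direct limits and direct sums unconditionally, giving closure of rad-pure-flat modules under direct limits (hence direct sums) and of absolutely rad-pure modules under direct sums, paralleling the $s$-pure arguments in Corollary \ref{c:closureDES}(1)(iii),(2)(iii). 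For rad-coneat, closure of the rad-coneat short exact sequences under direct sums follows from the injective-cogeneration side as in \cite[16.2]{Wisb}. The main obstacle is therefore bookkeeping the finiteness hypotheses correctly: I must verify that the commutation of $\Hom_R(M,-)$ with coproducts (needing $M$ finitely generated) and with direct limits (needing $M$ finitely presented) is exactly what forces the stated conditions on $\mathcal{M}$, and otherwise the proofs transcribe verbatim from Corollary \ref{c:closureDES} with ``simple'' replaced by ``with zero Jacobson radical'' and the finiteness of simple modules replaced by the explicit hypotheses.

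\begin{proof} This mainly follows by \cite[Proposition~4.1]{CK1}, considering the exact structures given by rad-neat (respectively rad-coneat, rad-pure) short exact sequences of modules, which are projectively (respectively injectively, flatly) generated by the class $\mathcal{M}$ of modules with zero Jacobson radical. We only add some details for the parts that differ from Corollary \ref{c:closureDES}.

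(1)(ii) Using the isomorphism $\Hom_R(M,\prod_{i\in I}M_i)\cong\prod_{i\in I}\Hom_R(M,M_i)$ for every $M\in\mathcal{M}$ and every family $(M_i)_{i\in I}$ of modules, the class of rad-neat short exact sequences is closed under direct products.

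(1)(iii) If every module with zero Jacobson radical is finitely generated, then $\Hom_R(M,\bigoplus_{i\in I}M_i)\cong\bigoplus_{i\in I}\Hom_R(M,M_i)$ for every $M\in\mathcal{M}$, so the class of rad-neat short exact sequences is closed under direct sums. The classes of rad-coneat and rad-pure short exact sequences are closed under direct sums by \cite[16.2]{Wisb} and \cite[12.15]{Wisb} respectively, without any additional hypothesis.

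(2)(ii), (2)(iii) The rad-neat assertions follow from the proof of (1)(iii). Since the tensor functor commutes with direct limits, the class of rad-pure short exact sequences is closed under direct limits, hence under direct sums. If every module with zero Jacobson radical is finitely presented, then $\Hom_R(M,\underset{\rightarrow}\lim M_i)\cong\underset{\rightarrow}\lim\Hom_R(M,M_i)$ for every $M\in\mathcal{M}$, so the class of rad-neat short exact sequences is closed under direct limits.

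(1)(v) By \cite[Proposition~4.1]{CK1}, every module is absolutely rad-neat if and only if every module with zero Jacobson radical is projective.

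(2)(v) By \cite[Proposition~4.1]{CK1}, every module is rad-coneat-flat if and only if every module with zero Jacobson radical is injective.
\end{proof}
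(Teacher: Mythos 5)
Your proposal is correct and follows essentially the same route as the paper: both reduce everything to \cite[Proposition~4.1]{CK1} applied to the exact structures projectively, injectively and flatly generated by the modules with zero Jacobson radical, and both supply the same supplementary details (commutation of $\Hom_R(M,-)$ with products, with coproducts under the finitely generated hypothesis, and with direct limits under the finitely presented hypothesis, plus \cite[16.2]{Wisb}, \cite[12.15]{Wisb} and the direct-limit property of the tensor functor). Your explicit treatment of (1)(v) and (2)(v) is consistent with how the paper handles the analogous items in Corollary~\ref{c:closureDES}.
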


\begin{proof} This mainly follows by \cite[Proposition~4.1]{CK1}, considering the exact structures given by rad-neat
(respectively rad-coneat, rad-pure) short exact sequences of modules, which are projectively (respectively
injectively, flatly) generated by the class of modules with zero Jacobson radical. We only add some details.

(1) (ii) Using the isomorphism $\Hom_R(M,\prod_{i\in I}M_i)\cong \prod_{i\in I}\Hom_R(M,M_i)$ for every module $M$ and
every family $(M_i)_{i\in I}$ of modules, it is straightforward to show that the class of rad-neat short exact sequences
is closed under direct products. 

(iii) One can easily show that the classes of rad-coneat and rad-pure short exact sequences are closed under direct sums
by \cite[16.2]{Wisb} and \cite[12.15]{Wisb} respectively. If every module $M$ with zero Jacobson radical is finitely
generated, then we have an isomorphism $\Hom_R(M,\bigoplus_{i\in I}M_i)\cong \bigoplus_{i\in I}\Hom_R(M,M_i)$ for every
family $(M_i)_{i\in I}$ of modules, from which it is straightforward (or use \cite[18.2]{Wisb}) to show that the class
of rad-neat short exact sequences is closed under direct sums. 

(2) (ii) By the proof of (1) (iii), the classes of rad-coneat and rad-pure short exact sequences are always closed under
direct sums, while the class of rad-neat short exact sequences is closed under direct sums if every module with zero
Jacobson radical is finitely generated. 

(iii) Since the tensor functor commutes with direct limits, it is clear that the class of rad-pure short exact sequences
is closed under direct limits. If every module $M$ with zero Jacobson radical is finitely presented, then we have an
isomorphism $\Hom_R(M,\underset{\rightarrow}\lim \ M_i)\cong \underset{\rightarrow}\lim \ \Hom_R(M,M_i)$ for every
family $(M_i)_{i\in I}$ of modules, from which it is straightforward to show that the class of rad-neat short exact
sequences is closed under direct limits.
\end{proof}

\begin{cor} 
\begin{enumerate} \item The following are equivalent for a right $R$-module $X$:
\begin{enumerate}[(i)]
\item $X$ is absolutely rad-neat.
\item $X$ is a rad-neat submodule of an injective right $R$-module.
\item $X$ is a rad-neat submodule of an absolutely rad-neat right $R$-module.
\item ${\rm Ext}^1_R(M,X)=0$ for every right $R$-module $M$ with zero Jacobson radical.
\end{enumerate}
\item The following are equivalent for a right $R$-module $Z$:
\begin{enumerate}[(i)]
\item $Z$ is rad-neat-flat.
\item $Z$ is a rad-neat quotient module of a projective right $R$-module.
\item $Z$ is a rad-neat quotient module of a rad-neat-flat right $R$-module.
\item For every right $R$-module $M$ with zero Jacobson radical, every morphism $M\to Z$ factors through a projective
right $R$-module.
\end{enumerate}
\end{enumerate} 
\end{cor}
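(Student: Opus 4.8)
The plan is to specialize the relative machinery of \cite{CK1} to the exact structure of rad-neat short exact sequences, in complete parallel with the treatment of the neat case earlier in this section. To begin, I would record the dictionary fixed in Example \ref{e:exact}(4): writing $\mathcal{M}$ for the class of right $R$-modules $M$ with $\Rad(M)=0$, the rad-neat exact structure $\mathcal{E}=\mathcal{E}_H^{\mathcal{M}}$ is projectively generated by $\mathcal{M}$ (its existence being guaranteed by \cite[Proposition~2.6]{CK1} as an intersection of the structures $\mathcal{E}_H$ over $M\in\mathcal{M}$, so that no set-theoretic difficulty arises from $\mathcal{M}$ being a proper class), the absolutely rad-neat modules are exactly the $\mathcal{D}$-$\mathcal{E}$-divisible objects, and the rad-neat-flat modules are exactly the $\mathcal{D}$-$\mathcal{E}$-flat objects, where $\mathcal{D}$ denotes the exact structure of all short exact sequences.

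For part (1), the equivalence (i)$\Leftrightarrow$(iv) comes from \cite[Proposition~3.3]{CK1}: applying $\Hom_R(M,-)$ to a short exact sequence $0\to X\to Y\to Z\to 0$ shows that the sequence is rad-neat if and only if the connecting map $\Hom_R(M,Z)\to {\rm Ext}^1_R(M,X)$ vanishes for every $M\in\mathcal{M}$, so that $X$ is absolutely rad-neat precisely when ${\rm Ext}^1_R(M,X)=0$ for all such $M$. The chain (i)$\Rightarrow$(ii)$\Rightarrow$(iii)$\Rightarrow$(i) is then supplied by \cite[Proposition~4.6]{CK1}. Concretely, for (i)$\Rightarrow$(ii) I would embed $X$ in its injective envelope $E(X)$; since $\mathcal{D}$-$\mathcal{E}$-divisibility forces the monomorphism $X\to E(X)$ to be rad-neat, $X$ is a rad-neat submodule of the injective module $E(X)$. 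As injective modules satisfy (iv) and hence are absolutely rad-neat, this also yields (ii)$\Rightarrow$(iii), while (iii)$\Rightarrow$(i) is the closure of the $\mathcal{D}$-$\mathcal{E}$-divisible objects under rad-neat submodules recorded in \cite[Proposition~4.6]{CK1}.

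Part (2) proceeds dually. Here (i)$\Leftrightarrow$(iv) is the factorization criterion of \cite[Proposition~4.6]{CK1}, by which $Z$ is $\mathcal{D}$-$\mathcal{E}$-flat exactly when every morphism $M\to Z$ with $M\in\mathcal{M}$ factors through a projective module. For (i)$\Rightarrow$(ii) I would take an epimorphism $P\to Z$ with $P$ projective; rad-neat-flatness forces this epimorphism to be rad-neat, exhibiting $Z$ as a rad-neat quotient of $P$. Since projective modules are rad-neat-flat (split epimorphisms are rad-neat), this gives (ii)$\Rightarrow$(iii), and (iii)$\Rightarrow$(i) is the closure of the $\mathcal{D}$-$\mathcal{E}$-flat objects under rad-neat quotients from \cite[Proposition~4.6]{CK1}.

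I do not expect a genuine obstacle here: all of the homological substance is already encapsulated in \cite[Propositions~3.3 and~4.6]{CK1}, and the argument consists of transporting the module-theoretic vocabulary of Example \ref{e:exact}(4) into that relative framework. The single point deserving attention is the verification of the two auxiliary facts used to bridge conditions (ii) and (iii) --- that injective modules are absolutely rad-neat and that projective modules are rad-neat-flat --- both of which fall out immediately from the Ext- and factorization-characterizations labelled (iv) in the two parts.
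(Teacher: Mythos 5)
Your proposal is correct and follows exactly the paper's route: the paper's proof is a one-line citation of \cite[Propositions~3.3 and~4.6]{CK1} applied to the exact structure of rad-neat short exact sequences, which is precisely the specialization you carry out (with the additional, harmless, spelling-out of the injective-envelope and projective-epimorphism arguments).
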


\begin{proof} This follows by \cite[Proposition~4.6]{CK1} and \cite[Proposition~3.3]{CK1}, considering the exact structure given by
rad-neat short exact sequences of right $R$-modules.
\end{proof}

\begin{cor} \label{applic}
\begin{enumerate} \item The following are equivalent for a right $R$-module $X$:
\begin{enumerate}[(i)]
\item $X$ is absolutely rad-coneat.
\item $X$ is a rad-coneat submodule of an injective right $R$-module.
\item $X$ is a rad-coneat submodule of an absolutely rad-coneat right $R$-module.
\item For every right $R$-module $M$ with zero Jacobson radical, every morphism $X\to M$ factors through an injective
right $R$-module.
\end{enumerate}
\item The following are equivalent for a right $R$-module $Z$:
\begin{enumerate}[(i)]
\item $Z$ is rad-coneat-flat.
\item $Z$ is a rad-coneat quotient module of a projective right $R$-module.
\item $Z$ is a rad-coneat quotient module of a rad-coneat-flat right $R$-module.
\item ${\rm Ext}^1_R(Z,M)=0$ for every right $R$-module $M$ with zero Jacobson radical.
\end{enumerate}
\end{enumerate} 
\end{cor}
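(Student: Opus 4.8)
The plan is to recognize that Corollary \ref{applic} is the exact analogue of an earlier corollary about absolutely coneat and coneat-flat modules, with the class of simple modules replaced by the class $\mathcal{M}$ of modules with zero Jacobson radical. Since the earlier proof simply invoked \cite[Proposition~4.6]{CK1} and \cite[Proposition~3.3]{CK1} for the coneat exact structure (which is the $\mathcal{M}$-copure exact structure injectively generated by the simple modules), the same machinery applies verbatim once we observe that rad-coneat short exact sequences are precisely the $\mathcal{M}$-copure sequences injectively generated by $\mathcal{M}$, as set up in Example \ref{e:exact}(4). So the whole statement should follow by citing these two results from \cite{CK1} for this particular injectively generated exact structure $\mathcal{E}$.

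\textbf{Key steps.} First I would fix $\mathcal{D}$ to be the exact structure of all short exact sequences and $\mathcal{E}$ the rad-coneat exact structure, so that absolutely rad-coneat modules are the $\mathcal{D}$-$\mathcal{E}$-divisible objects and rad-coneat-flat modules are the $\mathcal{D}$-$\mathcal{E}$-flat objects, as defined in Example \ref{e:exact}(4). For part (1), the equivalences (i)$\Leftrightarrow$(ii)$\Leftrightarrow$(iii) follow directly from \cite[Proposition~4.6]{CK1}, which characterizes $\mathcal{D}$-$\mathcal{E}$-divisible objects as $\mathcal{E}$-submodules (here rad-coneat submodules) of injective objects, equivalently of $\mathcal{D}$-$\mathcal{E}$-divisible objects. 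The equivalence with (iv) is the lifting/factorization characterization also furnished by \cite[Proposition~4.6]{CK1}: an object $X$ is $\mathcal{E}$-divisible exactly when every morphism from a generator $M\in\mathcal{M}$ into $X$ factors through an injective module, which here reads precisely as condition (iv) since $\mathcal{M}$ is the class of modules with zero Jacobson radical. For part (2), the dual statements (i)$\Leftrightarrow$(ii)$\Leftrightarrow$(iii) describe rad-coneat-flat modules as rad-coneat quotients of projectives (equivalently of rad-coneat-flat modules), again by \cite[Proposition~4.6]{CK1}, while condition (iv), the vanishing of ${\rm Ext}^1_R(Z,M)$ for every $M$ with zero Jacobson radical, follows from \cite[Proposition~3.3]{CK1}, which identifies the $\mathcal{E}$-flat objects for an injectively generated structure with those $Z$ satisfying ${\rm Ext}^1_R(Z,\mathcal{M})=0$.

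\textbf{The main obstacle} is essentially bookkeeping rather than mathematics: one must verify that the abstract generators in \cite[Propositions~3.3 and 4.6]{CK1} instantiate correctly to the factorization conditions (iv) in both parts, i.e. that ``$\mathcal{E}$-divisible'' unwinds to the stated morphism-factorization through injectives and that ``$\mathcal{E}$-flat'' unwinds to the ${\rm Ext}^1$-vanishing against $\mathcal{M}$. Since the rad-coneat structure is injectively generated by $\mathcal{M}$ (Example \ref{e:exact}(4)), both translations are immediate from the cited results, and no ring-theoretic hypotheses are needed. I would therefore simply write: ``This follows by \cite[Proposition~4.6]{CK1} and \cite[Proposition~3.3]{CK1}, considering the exact structure given by rad-coneat short exact sequences of right $R$-modules,'' mirroring the proof of the corresponding coneat corollary.
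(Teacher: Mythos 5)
Your proposal is correct and coincides with the paper's own proof, which likewise reads: ``This follows by \cite[Proposition~4.6]{CK1} and \cite[Proposition~3.3]{CK1}, considering the exact structure given by rad-coneat short exact sequences of right $R$-modules.'' Your identification of the rad-coneat structure as the one injectively generated by the modules with zero Jacobson radical, and the resulting split of the four conditions between the two cited propositions, matches the paper exactly.
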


\begin{proof} This follows by \cite[Proposition~4.6]{CK1} and \cite[Proposition~3.3]{CK1}, considering the exact structure given by
rad-coneat short exact sequences of right $R$-modules.
\end{proof}

\begin{cor}
\begin{enumerate} \item The following are equivalent for a left $R$-module $X$:
\begin{enumerate}[(i)]
\item $X$ is absolutely rad-pure.
\item $X$ is a rad-pure submodule of an injective left $R$-module.
\item $X$ is a rad-pure submodule of an absolutely rad-pure left $R$-module.
\end{enumerate}
\item The following are equivalent for a left $R$-module $Z$:
\begin{enumerate}[(i)]
\item $Z$ is rad-pure-flat.
\item $Z$ is a rad-pure quotient module of a projective left $R$-module.
\item $Z$ is a rad-pure quotient module of a rad-pure-flat left $R$-module.
\item ${\rm Tor}_1^R(M,Z)=0$ for every right $R$-module $M$ with zero Jacobson radical.
\item $Z^+$ is an absolutely rad-coneat right $R$-module.
\end{enumerate}
\end{enumerate}
\end{cor}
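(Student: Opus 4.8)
The final corollary concerns the exact structure $\mathcal{E}_T^{\mathcal{M}}$ flatly generated by the class $\mathcal{M}$ of right $R$-modules with zero Jacobson radical, i.e.\ the rad-pure short exact sequences in $\Mod(R)$; its $\mathcal{D}$-$\mathcal{E}$-flat objects are the rad-pure-flat left $R$-modules. Both items have exactly the shape of the analogous corollaries for the $s$-pure and coneat cases, so the plan is to imitate those proofs verbatim, replacing ``simple module'' by ``module with zero Jacobson radical'' throughout. The entire statement should follow from three earlier results applied to this one exact structure: the general equivalences \cite[Proposition~4.6]{CK1} and \cite[Proposition~3.3]{CK1} for relative flatness and relative divisibility, together with the flat-to-projective translation given by Proposition~\ref{p:Tor} and Proposition~\ref{p:char}.

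**Item (1).** Since $\mathcal{E}_T^{\mathcal{M}}$ is (by definition of a flatly generated structure) contained in all short exact sequences and the pair under consideration is a relative cotorsion pair, the equivalences (i)$\Leftrightarrow$(ii)$\Leftrightarrow$(iii) for absolutely rad-pure objects are a direct instance of the characterization of $\mathcal{D}$-$\mathcal{E}$-divisible objects in \cite[Proposition~4.6]{CK1}, exactly as in the absolutely $s$-pure corollary. No new idea is needed here; I would simply cite \cite[Proposition~4.6]{CK1}.

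**Item (2).** The equivalences (i)$\Leftrightarrow$(ii)$\Leftrightarrow$(iii) characterizing rad-pure-flat modules again follow from \cite[Proposition~4.6]{CK1} applied to the relatively flat objects of $\mathcal{E}_T^{\mathcal{M}}$. For (i)$\Leftrightarrow$(iv), I would invoke Proposition~\ref{p:Tor}: a left $R$-module $Z$ is $\mathcal{E}_T^{\mathcal{M}}$-flat if and only if ${\rm Tor}_1^R(\mathcal{M},Z)=0$, i.e.\ ${\rm Tor}_1^R(M,Z)=0$ for every $M$ with $\Rad(M)=0$, which is condition (iv). Finally, (i)$\Leftrightarrow$(v) is where the character-module argument enters: by Proposition~\ref{p:char}, $Z$ is $\mathcal{E}_T^{\mathcal{M}}$-flat if and only if $Z^+$ is $\mathcal{E}_H^{\mathcal{M}}$-divisible, and the $\mathcal{E}_H^{\mathcal{M}}$-divisible right $R$-modules for this $\mathcal{M}$ are precisely the absolutely rad-coneat modules (the $\mathcal{M}$-copure/divisible objects described in Example~\ref{e:exact}(4)). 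Thus $Z$ is rad-pure-flat iff $Z^+$ is absolutely rad-coneat.

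**Main obstacle and write-up.** There is no genuine obstacle: every step is an application of a general theorem from \cite{CK1} or from Propositions~\ref{p:Tor} and~\ref{p:char} to the single exact structure generated by $\{M : \Rad(M)=0\}$. The only point demanding a little care is matching the divisibility side of $\mathcal{E}_H^{\mathcal{M}}$ with ``absolutely rad-coneat'': one must confirm that the injectively generated structure dual to the present flatly generated one is the rad-coneat structure, so that its divisible objects are the absolutely rad-coneat modules, as recorded in Example~\ref{e:exact}(4). Accordingly the proof is a one-line citation. I would write it as follows.

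\begin{proof} This follows by \cite[Proposition~4.6]{CK1} and Propositions \ref{p:Tor} and \ref{p:char}, considering the exact structure given by rad-pure short exact sequences of left $R$-modules. \end{proof}
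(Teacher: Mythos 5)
Your proposal is, in substance, the paper's own proof: the authors give exactly the one-line citation you end with, and your itemized justification of (1) and of (2)(i)--(iv) matches how those citations are meant to be applied. There is, however, one slip in your justification of (2)(v). Proposition \ref{p:char} gives that $Z$ is $\mathcal{E}_T^{\mathcal{M}}$-flat if and only if $Z^+$ is $\mathcal{E}_H^{\mathcal{M}}$-divisible, and $\mathcal{E}_H^{\mathcal{M}}$ is the \emph{projectively} generated ($\mathcal{M}$-pure, i.e.\ rad-neat) structure, whose divisible objects are by Example \ref{e:exact}(4) the \emph{absolutely rad-neat} modules; when you describe the $\mathcal{E}_H^{\mathcal{M}}$-divisible objects as ``the $\mathcal{M}$-copure/divisible objects'' and hence as the absolutely rad-coneat modules, you are conflating $\mathcal{E}_H^{\mathcal{M}}$ with the injectively generated structure $\mathcal{E}_G^{\mathcal{M}}$, whose divisible objects are characterized by a factorization-through-injectives condition rather than by ${\rm Ext}^1_R(M,-)=0$. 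What the cited results actually prove in (v) is ``$Z^+$ is absolutely rad-neat'' (via ${\rm Ext}^1_R(M,Z^+)\cong {\rm Tor}_1^R(M,Z)^+$), exactly parallel to the $s$-pure corollary in Section 4 where (v) reads ``$Z^+$ is an absolutely \emph{neat} right $R$-module''; the word ``rad-coneat'' in the printed statement appears to be a typo, and your attempt to force the dictionary to match it is the one step that does not go through.
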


\begin{proof} This follows by \cite[Proposition~4.6]{CK1} and Propositions \ref{p:Tor} and \ref{p:char}, 
considering the exact structure given by rad-pure short exact sequences of left $R$-modules.
\end{proof}

\begin{cor} 
\begin{enumerate}
\item The following are equivalent:
\begin{enumerate}[(i)]
\item For every short exact sequence $0\to X\to Y'\to Z'\to 0$ of right $R$-modules with $X$ absolutely pure
(respectively cotorsion) and $Y'$ absolutely rad-neat, $Z'$ is absolutely rad-neat.
\item For every short exact sequence $0\to Z\to U\to V\to 0$ of right $R$-modules with $V$ having zero Jacobson radical
and $U$ projective, $Z$ is $FP$-projective (respectively flat).
\end{enumerate}
\item The following are equivalent:
\begin{enumerate}[(i)]
\item The class of absolutely rad-neat right $R$-modules is closed under homomorphic images.
\item For every short exact sequence $0\to Z\to U\to V\to 0$ of right $R$-modules with $V$ having zero Jacobson
radical and $U$ projective, $Z$ is projective.
\end{enumerate}
\end{enumerate}
\end{cor}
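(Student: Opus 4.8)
The plan is to recognize this corollary as nothing more than the specialization of the general Corollaries \ref{c:fproj} and \ref{c:quot} to the exact structure given by rad-neat short exact sequences of right $R$-modules. First I would invoke Example \ref{e:exact}(4), which sets up the required dictionary: the exact structure $\mathcal{E}$ of rad-neat short exact sequences is precisely the one projectively generated by the class $\mathcal{M}$ of right $R$-modules $M$ with $\Rad(M)=0$, and (since here $\mathcal{D}$ is the exact structure of all short exact sequences) the $\mathcal{E}$-divisible objects are exactly the absolutely rad-neat modules. Once this identification is in place, the hypotheses of Corollaries \ref{c:fproj}(1) and \ref{c:quot}(1) are met, and the two parts of the present statement are literal translations of the two parts of those cited results.

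Concretely, for part (1) I would apply Corollary \ref{c:fproj}(1) with $\mathcal{M}=\{M : \Rad(M)=0\}$: condition (i) there, asking that $Z'$ be $\mathcal{E}$-divisible whenever $0\to X\to Y'\to Z'\to 0$ is short exact with $X$ absolutely pure (respectively cotorsion) and $Y'$ $\mathcal{E}$-divisible, becomes condition (i) here once \emph{$\mathcal{E}$-divisible} is read as \emph{absolutely rad-neat}; and condition (ii) there, phrased for $V\in\mathcal{M}$, becomes condition (ii) here with $V$ having zero Jacobson radical. Likewise, for part (2) I would apply Corollary \ref{c:quot}(1): the closure of the class of $\mathcal{E}$-divisible modules under homomorphic images translates into the closure of the absolutely rad-neat modules under homomorphic images, while the condition forcing $Z$ to be projective for sequences $0\to Z\to U\to V\to 0$ with $V\in\mathcal{M}$ and $U$ projective translates verbatim.

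Since both ingredients are already established — the identification of the rad-neat exact structure as projectively generated by $\mathcal{M}$ in Example \ref{e:exact}(4), and the general equivalences in Corollaries \ref{c:fproj} and \ref{c:quot} — I do not expect any genuine obstacle; the entire content of the proof is the observation that this corollary is the instance $\mathcal{M}=\{M : \Rad(M)=0\}$ of those results. The only point I would pause to confirm is that $\mathcal{M}$ does projectively generate an exact structure on ${\rm Mod}(R)$, so that the cited corollaries apply as stated, but this is exactly what Example \ref{e:exact}(4) records. Thus the proof reduces to a single sentence citing Corollaries \ref{c:fproj} and \ref{c:quot} for the rad-neat exact structure, in complete parallel with the corresponding corollary for the simple-module (neat) case.
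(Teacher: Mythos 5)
Your proposal is correct and matches the paper's own proof exactly: the paper likewise derives this corollary by applying Corollaries \ref{c:fproj} and \ref{c:quot} to the exact structure given by rad-neat short exact sequences, i.e.\ the one projectively generated by the class of right $R$-modules with zero Jacobson radical as recorded in Example \ref{e:exact}(4). No further comment is needed.
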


\begin{proof} This follows by Corollaries \ref{c:fproj} and \ref{c:quot}, considering the exact structure given by
rad-neat short exact sequences of right $R$-modules.
\end{proof}

\begin{cor} 
\begin{enumerate}
\item The following are equivalent:
\begin{enumerate}[(i)]
\item For every short exact sequence $0\to Z\to U\to V\to 0$ of right $R$-modules with $V$ $FP$-projective (respectively
flat) and $U$ rad-coneat-flat, $Z$ is rad-coneat-flat.
\item For every short exact sequence $0\to X\to Y'\to Z'\to 0$ of right $R$-modules with $X$ having zero Jacobson
radical and $Y'$ injective, $Z'$ is absolutely pure (respectively cotorsion).
\end{enumerate}
\item The following are equivalent:
\begin{enumerate}[(i)]
\item The class of rad-coneat-flat right $R$-modules is closed under submodules.
\item For every short exact sequence $0\to X\to Y'\to Z'\to 0$ of right $R$-modules with $X$ having zero Jacobson
radical and $Y'$ injective, $Z'$ is injective.
\end{enumerate}
\end{enumerate}
\end{cor}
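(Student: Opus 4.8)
The plan is to recognise both equivalences as instances of the general results already proved for injectively generated exact structures, so that the entire argument reduces to selecting the correct exact structure and quoting Corollaries \ref{c:fproj} and \ref{c:quot}. By Example \ref{e:exact}(4), the rad-coneat short exact sequences constitute the exact structure $\mathcal{E}$ on ${\rm Mod}(R)$ which is injectively generated by the class $\mathcal{M}$ of right $R$-modules $M$ with $\Rad(M)=0$, and the rad-coneat-flat modules are by definition the $\mathcal{E}$-flat modules. This identification places us in the ``injectively generated'' branch, namely part (2) of each of the two cited corollaries.

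First I would establish item (1) by specialising Corollary \ref{c:fproj}(2) to this $\mathcal{E}$ and $\mathcal{M}$. Since $X\in\mathcal{M}$ is exactly the condition $\Rad(X)=0$, and $\mathcal{E}$-flat means rad-coneat-flat, condition (i) of Corollary \ref{c:fproj}(2) becomes condition (i) here word for word, and likewise for the two conditions (ii); the equivalence then holds simultaneously in the $FP$-projective/absolutely pure and the flat/cotorsion variants.

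Next I would obtain item (2) in the same manner from Corollary \ref{c:quot}(2): closure of the $\mathcal{E}$-flat modules under submodules is closure of the rad-coneat-flat modules under submodules, and the condition on short exact sequences $0\to X\to Y'\to Z'\to 0$ with $\Rad(X)=0$ and $Y'$ injective forcing $Z'$ injective is verbatim condition (ii) of that corollary.

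As this is a pure specialisation of earlier results, no step is genuinely difficult; the only point demanding attention is the bookkeeping that rad-coneat is the \emph{injectively} generated structure $\mathcal{E}_G^{\mathcal{M}}$ attached to $\mathcal{M}$ (rather than the projectively generated $\mathcal{E}_H^{\mathcal{M}}$ or a flatly generated one), so that part (2) rather than part (1) of the cited corollaries applies. This is exactly what Example \ref{e:exact}(4) records, where rad-coneat is defined as the class of $\mathcal{M}$-copure short exact sequences.
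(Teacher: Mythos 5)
Your proof is correct and coincides with the paper's own argument: the paper likewise obtains both items by applying Corollaries \ref{c:fproj} and \ref{c:quot} (in their injectively generated versions) to the exact structure of rad-coneat short exact sequences, which is injectively generated by the class of modules with zero Jacobson radical. Your remark that one must use part (2) of each cited corollary, since rad-coneat is the injectively generated structure $\mathcal{E}_G^{\mathcal{M}}$, is exactly the right bookkeeping.
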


\begin{proof} This follows by Corollaries \ref{c:fproj} and \ref{c:quot}, considering the exact structure given by
rad-coneat short exact sequences of right $R$-modules.
\end{proof}

Following Definition \ref{defcoh}, a ring $R$ will be called \emph{rad-coherent} if every right $R$-module with zero
Jacobson radical is $2$-presented.

\begin{cor} \label{c:radcoh} Assume that every right $R$-module with zero Jacobson radical is finitely presented. Then
the following are equivalent: 
\begin{enumerate}[(i)] 
\item $R$ is right rad-coherent.
\item $\underset{\rightarrow}\lim {\rm Ext}^1_R(M,X_i)\cong {\rm Ext}^1_R(M,\underset{\rightarrow}\lim X_i)$ for every
right $R$-module $M$ with zero Jacobson radical and every direct system $(X_i)_{i\in I}$ of right $R$-modules.
\item The class of absolutely rad-neat right $R$-modules is closed under direct limits.
\item The class of absolutely rad-neat right $R$-modules is closed under pure quotients.
\item ${\rm Tor}_1^R(M,\prod_{i\in I}Z_i)\cong \prod_{i\in I} {\rm Tor}_1^R(M,Z_i)$ for every right $R$-module $M$
with zero Jacobson radical and every family $(Z_i)_{i\in I}$ of left $R$-modules.
\item The class of rad-pure-flat left $R$-modules is closed under direct products.
\item A right $R$-module $X$ is absolutely rad-neat if and only if $X^+$ is a rad-pure-flat left $R$-module.
\item A right $R$-module $X$ is absolutely rad-neat if and only if $X^{++}$ is an absolutely rad-neat right
$R$-module. 
\item A left $R$-module $Z$ is rad-pure-flat if and only if $Z^{++}$ is a rad-pure-flat left $R$-module.
\end{enumerate}  
\end{cor}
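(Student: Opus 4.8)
The plan is to recognize this corollary as a direct specialization of Proposition \ref{p:coh} to the class $\mathcal{N}$ consisting of all right $R$-modules with zero Jacobson radical. Proposition \ref{p:coh} requires $\mathcal{N}$ to be a class of \emph{finitely presented} right $R$-modules, and the standing hypothesis of the corollary---that every right $R$-module with zero Jacobson radical is finitely presented---is precisely what guarantees this. So my first step would simply be to fix this $\mathcal{N}$ and observe that the hypothesis places us in the situation covered by Proposition \ref{p:coh}.

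Next I would set up the dictionary between the abstract notions appearing in Proposition \ref{p:coh} and the named notions in the corollary, using Example \ref{e:exact}(4). For this $\mathcal{N}$, the projectively generated exact structure $\mathcal{E}_H^{\mathcal{N}}$ is the one given by rad-neat short exact sequences, and its relatively divisible objects---the $\mathcal{E}_H^{\mathcal{N}}$-divisible right $R$-modules---are exactly the absolutely rad-neat modules. Dually, the flatly generated exact structure $\mathcal{E}_T^{\mathcal{N}}$ is given by rad-pure short exact sequences, and its $\mathcal{E}_T^{\mathcal{N}}$-flat left $R$-modules are exactly the rad-pure-flat modules. Finally, following Definition \ref{defcoh}, being \emph{right rad-coherent} is by definition being right $\mathcal{N}$-coherent for this $\mathcal{N}$.

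With this dictionary in hand, the concluding step is to check that the nine conditions (i)--(ix) of the corollary are the verbatim translations of the nine conditions of Proposition \ref{p:coh}: (i) is the coherence statement; (ii) and (v) are the commutation of direct limits with ${\rm Ext}^1_R(M,-)$ and of ${\rm Tor}_1^R(M,-)$ with direct products; (iii), (iv) and (vi) are the closure properties under direct limits, pure quotients and direct products; and (vii)--(ix) are the character-module (double-dual) characterizations relating $\mathcal{E}_H^{\mathcal{N}}$-divisibility and $\mathcal{E}_T^{\mathcal{N}}$-flatness. Once each condition is matched under the dictionary above, the equivalences follow immediately from Proposition \ref{p:coh}, exactly as in the parallel Corollary \ref{c:scoh} for the simple modules.

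I do not expect any genuine mathematical obstacle here, since the result is a formal specialization. The only points requiring care are the bookkeeping of the translation of all nine conditions, and the one non-formal input---namely verifying that the hypothesis really does force the whole class $\mathcal{N}$ of modules with zero Jacobson radical to consist of finitely presented modules, so that the finitely-presented assumption of Proposition \ref{p:coh} is genuinely met and the proposition is applicable.
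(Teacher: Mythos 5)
Your proposal is correct and is precisely the paper's own argument: the authors prove this corollary by applying Proposition \ref{p:coh} to the class $\mathcal{N}$ of right $R$-modules with zero Jacobson radical, with the standing finitely-presented hypothesis ensuring applicability and the dictionary of Example \ref{e:exact}(4) and Definition \ref{defcoh} translating each condition. Your write-up merely makes the bookkeeping explicit.
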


\begin{proof} This follows by Proposition \ref{p:coh}, considering the exact structures given by rad-neat 
and rad-pure short exact sequences of right $R$-modules.
\end{proof}

\begin{cor} Assume that every right $R$-module with zero Jacobson radical is finitely presented. Then the following are
equivalent:  
\begin{enumerate}[(i)] 
\item $R$ is right rad-coherent and every rad-pure-flat left $R$-module is flat.
\item $R$ is right coherent and a right $R$-module $X$ is absolutely rad-neat if and only if $X$ is absolutely pure.
\item A right $R$-module $X$ is absolutely rad-neat if and only if $X^+$ is a flat left $R$-module.
\end{enumerate}  
\end{cor}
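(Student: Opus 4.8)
The plan is to obtain this corollary as a direct specialization of Proposition \ref{p:coh2}, taking $\mathcal{N}$ to be the class of right $R$-modules with zero Jacobson radical. First I would check that Proposition \ref{p:coh2} is applicable: its hypothesis requires $\mathcal{N}$ to consist of finitely presented modules, and this is guaranteed precisely by the standing assumption of the corollary that every right $R$-module with zero Jacobson radical is finitely presented. So no additional verification of the ambient setup is needed beyond invoking this hypothesis.

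The next step is purely a matter of matching terminology to the general notation of Proposition \ref{p:coh2}. By Example \ref{e:exact}(4), the exact structure projectively generated by $\mathcal{N}$ is the one given by rad-neat short exact sequences, so that $\mathcal{E}_H^{\mathcal{N}}$-divisible means exactly absolutely rad-neat; and the exact structure flatly generated by $\mathcal{N}$ is the one given by rad-pure short exact sequences, so that $\mathcal{E}_T^{\mathcal{N}}$-flat means exactly rad-pure-flat. Finally, by the definition of rad-coherent following Definition \ref{defcoh}, the notion of right $\mathcal{N}$-coherence coincides with right rad-coherence for this choice of $\mathcal{N}$. Under these three identifications, conditions (i), (ii), (iii) of the corollary become, word for word, conditions (i), (ii), (iii) of Proposition \ref{p:coh2}.

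With these dictionary entries in place, the equivalence (i)$\Leftrightarrow$(ii)$\Leftrightarrow$(iii) is immediate from Proposition \ref{p:coh2}, considering the exact structures given by rad-neat and rad-pure short exact sequences of right $R$-modules. I do not anticipate any genuine obstacle here: the entire content of the statement is already contained in the general proposition, and the only thing to confirm is the bookkeeping of the three named classes against their $\mathcal{E}_H^{\mathcal{N}}$- and $\mathcal{E}_T^{\mathcal{N}}$-counterparts, together with the observation that the finiteness hypothesis on $\mathcal{N}$ is exactly the hypothesis imposed in the corollary. This mirrors the pattern used for the analogous simple-module statement in the previous section.
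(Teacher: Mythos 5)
Your proposal is correct and coincides with the paper's own proof: the corollary is obtained by specializing Proposition \ref{p:coh2} to the class $\mathcal{N}$ of right $R$-modules with zero Jacobson radical, using the standing finite presentation hypothesis and the terminological dictionary (rad-neat for $\mathcal{E}_H^{\mathcal{N}}$, rad-pure for $\mathcal{E}_T^{\mathcal{N}}$, rad-coherent for $\mathcal{N}$-coherent) from Example \ref{e:exact}(4) and Definition \ref{defcoh}.
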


\begin{proof} This follows by Proposition \ref{p:coh2}, considering the exact structures from the proof of Corollary
\ref{c:radcoh}.
\end{proof}

\end{document}